\newtheorem{theorem}{Theorem}[section]
\newtheorem*{theorem*}{Theorem}
\newtheorem*{lemma*}{Lemma}
\newtheorem*{definition*}{Definition}
\newtheorem{corollary}[theorem]{Corollary}
\newtheorem{lemma}[theorem]{Lemma}
\newtheorem{proposition}[theorem]{Proposition}
\newtheorem{definition}[theorem]{Definition}
\newtheorem{remark}[theorem]{Remark}
\newtheorem{example}[theorem]{Example}
\renewenvironment{proof}[1][\proofname]{\par
  \normalfont \topsep6\p@\@plus6\p@\relax
  \trivlist
  \item[\hskip\labelsep
        \itshape
    #1\@addpunct{.}]\ignorespaces
}{%
  \nolinebreak\endtrivlist\@endpefalse
}
 \newtheorem{algorithm}{\textbf{\textup{Algorithm}}}
\DeclareMathOperator*{\argmin}{arg\,min}
\DeclareMathOperator{\gph}{gph}
\DeclareMathOperator{\dom}{dom}
\DeclareMathOperator{\epi}{epi}
\DeclareMathOperator{\intr}{int}
\DeclareMathOperator{\bdry}{bdry}
\DeclareMathOperator{\con}{con}
\DeclareMathOperator{\ran}{ran}
\DeclareMathOperator{\lev}{lev}
\DeclareMathOperator{\cl}{cl}
\DeclareMathOperator{\sign}{sign}
\newcommand{\bR}{\mathbb{R}}
\newcommand{\bN}{\mathbb{N}}
\newcommand{\exR}{\overline{\mathbb{R}}}
\newcommand{\scal}[2]{\langle#1,#2\rangle}
\newcommand{\setsep}{\,:\,}
\newcommand{\set}[1]{\left\lbrace#1\right\rbrace}
\newcommand{\map}[3]{#1\colon#2\to#3}
\newcommand{\norm}[2][]{\Vert #2 \Vert_{#1}}
\newcommand{\eps}{\epsilon}
\newcommand{\abs}[1]{|#1|}
\newcommand{\iprod}[2]{\langle#1,#2\rangle}
\newcommand{\lenv}[3]{\ensuremath{\,\overleftarrow{\operatorname{env}}_{#3}^{#1} #2}}
\newcommand{\lprox}[3]{\ensuremath{\,\overleftarrow{\operatorname{prox}}_{#3}^{#1} #2}}
\newcommand{\renv}[3]{\ensuremath{\,\overrightarrow{\operatorname{env}}_{#3}^{#1} #2}}
\newcommand{\rprox}[3]{\ensuremath{\,\overrightarrow{\operatorname{prox}}_{#3}^{#1} #2}}
\newcommand{\q}{v}
\newcommand{\x}{x}
\newcommand{\y}{y}
\newcommand{\w}{u}
\newcommand{\z}{z}
\newcommand{\D}[1][]{%
\ifthenelse{\equal{#1}{}}{D}{D_{#1}}%
}
\let\save@mathaccent\mathaccent
\newcommand*\if@single[3]{%
  \setbox0\hbox{${\mathaccent"0362{#1}}^H$}%
  \setbox2\hbox{${\mathaccent"0362{\kern0pt#1}}^H$}%
  \ifdim\ht0=\ht2 #3\else #2\fi
  }
\newcommand*\rel@kern[1]{\kern#1\dimexpr\macc@kerna}
\newcommand*\widebar[1]{\@ifnextchar^{{\wide@bar{#1}{0}}}{\wide@bar{#1}{1}}}
\newcommand*\wide@bar[2]{\if@single{#1}{\wide@bar@{#1}{#2}{1}}{\wide@bar@{#1}{#2}{2}}}
\newcommand*\wide@bar@[3]{%
  \begingroup
  \def\mathaccent##1##2{%
    \let\mathaccent\save@mathaccent
    \if#32 \let\macc@nucleus\first@char \fi
    \setbox\z@\hbox{$\macc@style{\macc@nucleus}_{}$}%
    \setbox\tw@\hbox{$\macc@style{\macc@nucleus}{}_{}$}%
    \dimen@\wd\tw@
    \advance\dimen@-\wd\z@
    \divide\dimen@ 3
    \@tempdima\wd\tw@
    \advance\@tempdima-\scriptspace
    \divide\@tempdima 10
    \advance\dimen@-\@tempdima
    \ifdim\dimen@>\z@ \dimen@0pt\fi
    \rel@kern{0.6}\kern-\dimen@
    \if#31
      \overline{\rel@kern{-0.6}\kern\dimen@\macc@nucleus\rel@kern{0.4}\kern\dimen@}%
      \advance\dimen@0.4\dimexpr\macc@kerna
      \let\final@kern#2%
      \ifdim\dimen@<\z@ \let\final@kern1\fi
      \if\final@kern1 \kern-\dimen@\fi
    \else
      \overline{\rel@kern{-0.6}\kern\dimen@#1}%
    \fi
  }%
  \macc@depth\@ne
  \let\math@bgroup\@empty \let\math@egroup\macc@set@skewchar
  \mathsurround\z@ \frozen@everymath{\mathgroup\macc@group\relax}%
  \macc@set@skewchar\relax
  \let\mathaccentV\macc@nested@a
  \if#31
    \macc@nested@a\relax111{#1}%
  \else
    \def\gobble@till@marker##1\endmarker{}%
    \futurelet\first@char\gobble@till@marker#1\endmarker
    \ifcat\noexpand\first@char A\else
      \def\first@char{}%
    \fi
    \macc@nested@a\relax111{\first@char}%
  \fi
  \endgroup
}
\newcommand{\mynewline}{}
\title{Bregman Proximal Mappings and Bregman--Moreau Envelopes under Relative Prox-Regularity}
\author{
Emanuel Laude\footnote{Department of Informatics, Technical University of Munich, Germany, {\tt emanuel.laude@tum.de}},
Peter Ochs\footnote{Department of Mathematics, Saarland University, Germany, {\tt ochs@math.uni-sb.de}},
and Daniel Cremers\footnote{Department of Informatics, Technical University of Munich, Germany, {\tt cremers@tum.de}}}
\date{December 6, 2019}
\begin{document}
\maketitle
\abstract{We systematically study the local single-valuedness of the Bregman proximal mapping and local smoothness of the Bregman--Moreau envelope of a nonconvex function under relative prox-regularity - an extension of prox-regularity - which was originally introduced by Poliquin and Rockafellar. 
As Bregman distances are asymmetric in general, in accordance with Bauschke et al., it is natural to consider two variants of the Bregman proximal mapping, which, depending on the order of the arguments, are called left and right Bregman proximal mapping. We consider the left Bregman proximal mapping first. Then, via translation result, we obtain analogue (and partially sharp) results for the right Bregman proximal mapping. The class of relatively prox-regular functions significantly extends the recently considered class of relatively hypoconvex functions. In particular, relative prox-regularity allows for functions with a possibly nonconvex domain. Moreover, as a main source of examples and analogously to the classical setting, we introduce relatively amenable functions, i.e. convexly composite functions, for which the inner nonlinear mapping is component-wise smooth adaptable, a recently introduced extension of Lipschitz differentiability. By way of example, we apply our theory to locally interpret joint alternating Bregman minimization with proximal regularization as a Bregman proximal gradient algorithm, applied to a smooth adaptable function.
}
~\\
~\\
{\bf Keywords:} Bregman--Moreau envelope $\cdot$ Bregman proximal mapping $\cdot$ prox-regularity $\cdot$ amenable functions
~\\
~\\
{\bf 2000 Mathematics Subject Classification:} 49J52 $\cdot$ 65K05 $\cdot$ 65K10 $\cdot$ 90C26

\captionsetup{width=1\linewidth}
\captionsetup{font=small}

\section{Introduction}
\subsection{Motivation and Related Work}
The Moreau envelope \cite{Moreau65} is a widely used and powerful tool in variational analysis and optimization, whose systematic study was initiated by Attouch \cite{Attouch77,Attouch84}. Given an input function, it yields a regularized function with several favorable properties such as differentiability and full domain. A key result, besides the aforementioned differentiability of the Moreau envelope, is the single-valuedness of the proximal mapping and a formula that relates its gradient to the proximal mapping.

For nonconvex functions, in general, these desirable properties are lost. However, if we focus solely on local properties, most of the results can be transferred to a certain class of nonconvex functions, namely prox-regular functions. Prox-regularity was introduced by Poliquin and Rockafellar \cite{PoRo96} and comprises several widely used classes of functions, such as primal-lower-nice functions \cite{Poliquin91}, subsmooth functions, strongly amenable functions \cite{PoRo96,RoWe98}, and proper lower semi-continuous convex functions. Prox-regular functions often behave locally like convex functions and, therefore, the single-valuedness property of the proximal mapping and the gradient formula for the Moreau envelope of a prox-regular function are locally valid \cite{PoRo96} (see \cite{BBEM10,JTZ14} for the infinite dimensional setting). In particular, indicator functions of closed and prox-regular sets have rich properties \cite{poliquin2000local}. \\

While these concepts are closely tied to Euclidean geometry, Bregman \cite{Bre67} introduced a generalization of the Euclidean distance, which has proved very effective for optimization. In particular, the Bregman-proximal mapping is a key component in several algorithms (for example, see \cite{BBT2016,BSTV2018,Teboulle92,bauschke2019linear,MOPS19,bauschke2006joint,bauschke2003bregman,BC03,Eckstein93,OFB18,BC01,CR98,CH02,CZ97,KRS11,Kiwiel1997,Nguyen16,davis2018stochastic,RRP18,HRX18,LFN18,BK12,MO19b,Nguyen17,BBES17,CZ92,bauschke2017regularizing} and references therein) and is known to improve constants in convergence rate estimates \cite{NY83,Teboulle92}. Recently, there has been increasing interest in understanding the regularization properties for Bregman--Moreau envelopes \cite{bauschke2017regularizing,chen2012moreau,kan2012moreau,bauschke2009bregman,davis2018stochastic}, generalizing the known results for the Moreau envelope. While \cite{bauschke2009bregman,bauschke2017regularizing} consider a fully convex setting with jointly convex Bregman distances, the aforementioned properties also hold for certain nonconvex functions: For so-called (relatively) hypoconvex functions, i.e. functions that become convex by adding a distance-generating kernel (or Legendre) function \cite{chen2012moreau,kan2012moreau,davis2018stochastic}, single-valuedness and the gradient formula of the Bregman--Moreau envelope are proved in \cite{chen2012moreau,kan2012moreau}. This generalizes the Euclidean distance setting; see e.g. \cite{wang2010chebyshev}.

However, hypoconvexity is a global property, which is not satisfied by many nonconvex functions in the Euclidean setting that are still prox-regular. In fact, hypoconvex functions must have a convex domain. More specifically, for an indicator function of a closed set, at least in finite dimensions, the global single-valuedness of the Bregman proximal mapping is equivalent to the convexity of the set \cite{bauschke2009bregman}. In a Euclidean setting, this is also known as the Chebyshev problem.

In this paper, we introduce the counterpart of prox-regular functions with respect to a nonlinear geometry induced by a Bregman distance. We refer to them as \emph{relatively prox-regular} or, to make the geometry explicit, prox-regular relative to a distance-generating Legendre function. We systematically study this novel class of functions, provide several examples, and generalize results for Moreau envelopes of (classically) prox-regular functions to Bregman--Moreau envelopes of relatively prox-regular functions. 

Intuitively, classically prox-regular functions allow for a local lower-quadratic support. However, obviously, several simple functions cannot be supported by quadratic functions, and even if they may be supported by a quadratic function, a tighter approximation may be obtained for another (e.g. higher degree polynomial) supporting function. We exploit the improved lower approximation and derive generalizations of the results that are known from the Euclidean setting, such as local single-valuedness of the Bregman proximal mapping, differentiability and the gradient formula for the Bregman--Moreau envelope. Although several functions that are relatively prox-regular, are also classically prox-regular, the classical theory cannot explain all of these situations. 

Moreover, analogously to the Euclidean setting, for which strongly amenable functions provide a source of examples for prox-regular functions, we introduce relatively amenable functions. Their definition is based on a recent generalization of functions that have a Lipschitz continuous gradient to smooth adaptable (or relatively smooth) functions \cite{BBT2016,BSTV2018}, i.e. functions that are continuously differentiable and convex relative to a distance-generating Legendre function.\\

While our main focus is the (left) Bregman proximal mapping for such relatively prox-regular nonconvex functions, we shall also transfer our results to the right Bregman proximal mapping, that was introduced in \cite{bauschke2006joint} and further studied in \cite{bauschke2017regularizing} for convex functions and jointly convex Bregman distances.

In addition, we provide explicit formulas for the gradients of both the left and the right Bregman--Moreau envelope that hold locally, for example, in a neighborhood of a limit point of an algorithm. In practice, this allows us to interpret a stationary point of a joint minimization problem as a stationary point of a smoothed model involving a Bregman--Moreau envelope. Such a ``translation of stationarity'' has been observed previously in \cite{LWC18,laude-wu-cremers-aistats-19} for the classical Moreau envelope and an anisotropic generalization of the former, both under prox-regularity. In addition, we apply our theory to locally interpret joint alternating Bregman minimization with proximal regularization as a Bregman proximal gradient algorithm.

\subsection{Outline and Summary of our Contribution}
The remainder of the paper is organized as follows: Section~\ref{sec:preliminaries} clarifies the notation and collects important properties and facts about \emph{Bregman} distances generated by \emph{Legendre} functions, also called (distance-generating) \emph{kernel} functions.

In Section~\ref{sec:continuity}, we recall the definition of \emph{relative prox-boundedness} \cite[Definition 2.3]{kan2012moreau} and state some important results about the continuity properties of the nonconvex Bregman proximal mapping and associated envelope function from \cite{kan2012moreau}. We complement these results by formulating equivalent characterizations of relative prox-boundedness, and transfer the results also to the (nonconvex) right Bregman proximal mapping.

In Section~\ref{sec:relative_prox_regularity}, we study the \emph{single-valuedness of the left and right Bregman proximal mapping} of a \emph{relatively prox-regular} (nonconvex) function, a generalization of \cite[Definition 13.27]{RoWe98}, to a Bregman distance setting. In this context, we clarify the equivalence to classical prox-regularity under \emph{very strict convexity} of the kernel function.

In Section~\ref{sec:examples_prox_regular}, we identify \emph{relatively amenable} functions, i.e. the composition of a convex function and a smooth adaptable mapping \cite{BBT2016,BSTV2018}, as a main source of examples for relative prox-regularity. Relatively amenable functions generalize the notion of strongly amenable functions \cite[Definition 10.23]{RoWe98} and are in general not relatively hypoconvex. 
We also encounter the property of amenability when statements about the single-valuedness of the left Bregman proximal mapping are transferred to the right Bregman proximal mapping. This includes the single-valuedness of the right Bregman proximal mapping of a convex function, which yields a nonconvex minimization problem in general.

In Section~\ref{sec:gradient_formulas}, we study local regularity properties of the Bregman--Moreau envelope based on the local single-valuedness of the Bregman proximal mappings. This yields \emph{explicit formulas for the gradient of the left and right Bregman--Moreau envelope} that will hold locally.

As an example of using the developed theory in optimization, in Section~\ref{sec:algorithm}, we provide an example of an analytically solvable Bregman proximal mapping of a nonconvex relatively prox-regular function. In addition, we consider an \emph{alternating minimization algorithm with Bregman proximal regularization}. For relatively prox-regular functions, our results guarantee stationarity with respect to the equivalent inf-projected problem that involves the Bregman--Moreau envelope. Based on the gradient formulas for the Bregman--Moreau envelope, we conclude this section by highlighting a \emph{local equivalence between alternating minimization and recent Bregman proximal gradient algorithms} \cite{BBT2016,BSTV2018,bauschke2019linear}.

\section{Preliminaries and Notation} \label{sec:preliminaries}
Unless otherwise specified, we deploy the notation used in \cite{RoWe98}. In particular we adopt the notation of the regular $\widehat{\partial} f$, limiting $\partial f$ and horizon subgradients $\partial^\infty f$, from \cite[Definition 8.3]{RoWe98} and denote the indicator function of a set $C$ as $\delta_C$, i.e. $\delta_C(\x)=0$ if $\x\in C$ and $\delta_C(\x)=\infty$ otherwise. We say an extended real-valued function $f:\bR^m \to \exR:=\bR\cup\{-\infty, \infty\}$ is coercive if $f(\x) \to \infty$ for $\|\x\|\to \infty$ and super-coercive if $f(\x)/\|\x\| \to \infty$ for $\|\x\|\to \infty$.  According to \cite[Definition 1.33]{RoWe98}, a function $f:\bR^m \to \exR$ is locally lsc at $\bar \x$, a point where $f(\bar \x)$ is finite, if there is an $\epsilon > 0$, such that all sets of the form \mynewline $\{\x \in \bR^m : \|\x - \bar \x\| \leq \epsilon, f(\x) \leq \alpha\}$
with $\alpha \leq f(\bar \x) + \epsilon$ are closed.
For a $\mathcal{C}^1$ mapping $F:\bR^n \to \bR^m$ let $\nabla F(\x) \in \bR^{m \times n}$ denote the Jacobian of $F$ at $\x \in \bR^n$. With some slight abuse of notation, for a given set $C\subset \bR^m$, let $\con C$ denote its convex hull, i.e. the smallest convex set that contains $C$, and for an extended real-valued function $f$ let $\con f$ denote the largest convex function that is majorized by $f$.
Let $\Gamma_0(X)$ denote the set of all proper, lsc convex functions that map from some Euclidean space $X$ to $\exR$.

A function $\phi \in \Gamma_0(X)$ of Legendre type is defined according to \cite[Section 26]{Roc70}:
\begin{definition}[Legendre function] \label{def:legendre}
The function $\phi \in \Gamma_0(\bR^m)$ is
\begin{enumerate}
\item[\rm (i)] \emph{essentially smooth}, if $\intr(\dom \phi) \neq \emptyset$ and $\phi$ is differentiable on $\intr(\dom \phi)$ such that $\|\nabla\phi(\x^\nu)\|\to \infty$, whenever $\x^\nu \to \x \in \bdry \dom \phi$, and
\item[\rm (ii)] \emph{essentially strictly convex}, if $\phi$ is strictly convex on every convex subset of $\dom \partial \phi$, and
\item[\rm (iii)] of \emph{Legendre} type, if $\phi$ is both essentially smooth and essentially strictly convex.
\end{enumerate}
\end{definition}

We list some basic properties of Legendre functions:
\begin{lemma} \label{lem:legendre_props}
Let $\phi \in \Gamma_0(\bR^m)$ of Legendre type. Then $\phi$ has the following properties:
\begin{enumerate}
\item[\rm (i)] $\dom \partial \phi = \intr (\dom\phi)$, {\rm\cite[Theorem 26.1]{Roc70}}. 
\item[\rm (ii)] $\phi^*$ is of Legendre type, {\rm\cite[Theorem 26.5]{Roc70}}.
\item[\rm (iii)] $\map{\nabla \phi}{\intr (\dom\phi)}{\intr (\dom\phi^*)}$ is bijective with inverse \mynewline $\map{\nabla \phi^*}{\intr (\dom\phi^*)}{\intr (\dom\phi)}$ with both $\nabla \phi$ and $\nabla \phi^*$ continuous on $\intr (\dom\phi)$ resp. $ \intr(\dom\phi^*)$, {\rm\cite[Theorem 26.5]{Roc70}}.
\item[\rm (iv)] $\phi$ is super-coercive if and only if $\dom \phi^*= \bR^m$, {\rm\cite[Proposition 2.16]{bauschke1997legendre}}.
\end{enumerate}
\end{lemma}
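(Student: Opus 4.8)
The plan is to read off all four items from the Fenchel-conjugacy theory for functions of Legendre type, assembling each from Definition~\ref{def:legendre} together with standard convex-analytic duality; each statement is classical and could simply be cited, but I sketch the underlying arguments.

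For (i), I would first use essential smoothness: since $\phi$ is differentiable on $\intr(\dom\phi)$, the subdifferential reduces there to the singleton $\{\nabla\phi(\x)\}$, giving $\intr(\dom\phi)\subseteq\dom\partial\phi$. The reverse inclusion is the real content. At a boundary point $\x\in\bdry\dom\phi$ one must exclude subgradients, and essential smoothness supplies exactly this: the blow-up $\|\nabla\phi(\x^\nu)\|\to\infty$ along interior sequences $\x^\nu\to\x$ is incompatible with a finite subgradient at $\x$. Since $\partial\phi$ is empty off $\dom\phi$, this yields $\dom\partial\phi=\intr(\dom\phi)$.

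For (ii) and (iii), I would exploit the equivalences $\q\in\partial\phi(\x)\iff\x\in\partial\phi^*(\q)\iff\phi(\x)+\phi^*(\q)=\scal{\x}{\q}$. The structural fact driving both items is that essential smoothness and essential strict convexity are swapped under conjugation: $\phi$ essentially smooth forces $\phi^*$ essentially strictly convex, and conversely. Granting this, a Legendre $\phi$ has a Legendre conjugate $\phi^*$, which is (ii). For (iii), applying (i) to both $\phi$ and $\phi^*$ makes $\nabla\phi$ and $\nabla\phi^*$ single-valued on $\intr(\dom\phi)$ and $\intr(\dom\phi^*)$ respectively; the biconditional above then identifies them as mutually inverse bijections between these two sets. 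Continuity of each gradient follows from the general fact that a finite-valued convex function differentiable on an open convex set has a continuous gradient there.

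For (iv), I would invoke the standard equivalence between super-coercivity of $\phi$ and finiteness of $\phi^*$ everywhere: if $\phi(\x)/\|\x\|\to\infty$ then every linear functional is eventually dominated by $\phi$, so $\phi^*(\q)=\sup_{\x}(\scal{\x}{\q}-\phi(\x))$ is finite for all $\q\in\bR^m$, and conversely a finite conjugate forces the super-linear growth. I expect the main obstacle to be the conjugation duality underlying (ii): verifying that essential smoothness corresponds precisely to essential strict convexity requires relating the boundary blow-up of $\nabla\phi$ to the absence of affine (non-strictly-convex) pieces of $\phi^*$ along the faces of its domain, which is the one genuinely delicate point rather than a routine manipulation.
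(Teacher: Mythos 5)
The paper does not actually prove this lemma: each item is stated with a citation to the classical literature (\cite[Theorems 26.1 and 26.5]{Roc70} for (i)--(iii) and \cite[Proposition 2.16]{bauschke1997legendre} for (iv)), so your proposal is doing strictly more work than the paper does. Your sketches are faithful reconstructions of those classical arguments and contain no wrong turns: the reduction of (i) to excluding subgradients at boundary points via gradient blow-up, the conjugacy biconditional $\q\in\partial\phi(\x)\iff\x\in\partial\phi^*(\q)$ driving (ii) and (iii), and the two-sided growth/finiteness duality for (iv) are exactly the standard proofs. Two places are thinner than you may realize. First, in (i) the ``incompatibility'' of a finite subgradient at $\bar\x\in\bdry\dom\phi$ with $\|\nabla\phi(\x^\nu)\|\to\infty$ is not immediate from the definitions; the classical argument runs along a segment from an interior point to $\bar\x$ and uses monotonicity of the one-dimensional derivative $t\mapsto\scal{\nabla\phi(\x_t)}{\x_0-\bar\x}$ to bound a directional component of the gradient, which still requires an extra step to control the full norm. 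Second, as you correctly flag yourself, the swap of essential smoothness and essential strict convexity under conjugation is the genuinely delicate point; your sketch takes it as granted, which is fine for a lemma the paper itself delegates to \cite{Roc70}, but it is the part that would need real work if one insisted on self-containedness. For (iv), your converse direction implicitly uses local boundedness of the finite convex function $\phi^*$ and the biconjugation $\phi=\phi^{**}$; both hold here since $\phi\in\Gamma_0(\bR^m)$, so the argument closes.
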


Even though our main focus is on general Legendre functions, many classical Legendre functions satisfy the following additional property, which we adopt from \cite[Definition 2.8]{bauschke2000dykstras}:
\begin{definition}[Very strictly convex functions]
Suppose $\phi \in \Gamma_0(\bR^m)$ is $\mathcal{C}^2$ on $\intr(\dom \phi) \neq\emptyset$ and $\nabla^2 \phi(x)$ is positive definite for all $x\in \intr(\dom \phi)$. Then we say $\phi$ is \emph{very strictly convex}.
\end{definition}

\begin{lemma} \label{lem:inverse_hessian}
Let $\phi \in \Gamma_0(\bR^m)$ be Legendre and very strictly convex. Then $\phi^*$ is Legendre and very strictly convex. Moreover, for any conjugate pair $\x \in \intr(\dom\phi)$ and $\nabla \phi(\x) \in \intr(\dom\phi^*)$ the Hessian matrices $\nabla^2 \phi(\x)$ and $\nabla^2 \phi^*(\nabla\phi(\x))$ are inverse to each other.
\end{lemma}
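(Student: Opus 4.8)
The plan is to leverage the facts already recorded in Lemma~\ref{lem:legendre_props}. Since $\phi$ is Legendre, part~(ii) immediately gives that $\phi^*$ is of Legendre type, so only the very strict convexity of $\phi^*$ and the inverse-Hessian relation remain to be established. The engine for both is the bijection $\map{\nabla\phi}{\intr(\dom\phi)}{\intr(\dom\phi^*)}$ from part~(iii), together with the hypothesis that $\phi$ is $\mathcal{C}^2$ with everywhere positive definite Hessian.

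First I would observe that because $\nabla^2\phi(\x)$ is positive definite — hence invertible — at every $\x\in\intr(\dom\phi)$, the map $\nabla\phi$ is a $\mathcal{C}^1$ map with everywhere invertible Jacobian. The inverse function theorem then guarantees that $\nabla\phi$ is locally a $\mathcal{C}^1$ diffeomorphism and that its local inverse is itself $\mathcal{C}^1$. The key bookkeeping step is to identify this local inverse with the global continuous inverse $\nabla\phi^*$ supplied by Lemma~\ref{lem:legendre_props}(iii): since $\nabla\phi^*$ is the unique set-theoretic inverse of the bijection $\nabla\phi$, it must coincide with the locally defined $\mathcal{C}^1$ inverse on a neighborhood of each point of $\intr(\dom\phi^*)$. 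Consequently $\nabla\phi^*$ is $\mathcal{C}^1$ on all of $\intr(\dom\phi^*)$, which is exactly the statement that $\phi^*$ is $\mathcal{C}^2$ there.

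Next I would differentiate the inverse relation $\nabla\phi^*(\nabla\phi(\x))=\x$, valid for $\x\in\intr(\dom\phi)$, via the chain rule. This yields
\[
\nabla^2\phi^*(\nabla\phi(\x))\,\nabla^2\phi(\x)=I,
\]
which is precisely the asserted inverse relationship between the two Hessians. Very strict convexity of $\phi^*$ then follows: the identity shows $\nabla^2\phi^*(\nabla\phi(\x))=\big(\nabla^2\phi(\x)\big)^{-1}$, and the inverse of a symmetric positive definite matrix is again symmetric positive definite. Because $\nabla\phi$ maps $\intr(\dom\phi)$ onto $\intr(\dom\phi^*)$, every point of $\intr(\dom\phi^*)$ has the form $\nabla\phi(\x)$, so $\nabla^2\phi^*$ is positive definite throughout $\intr(\dom\phi^*)$, completing the proof.

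I expect the only genuinely delicate point to be the matching of the inverse function theorem's local inverse with the already-known global inverse $\nabla\phi^*$: one must ensure that the $\mathcal{C}^1$ smoothness obtained locally is transferred to $\nabla\phi^*$ itself rather than to some a priori distinct local branch. Everything else — the chain-rule computation and the fact that inverses of positive definite matrices are positive definite — is routine.
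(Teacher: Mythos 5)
Your proposal is correct and follows essentially the same route as the paper's proof: both invoke Lemma~\ref{lem:legendre_props} for the Legendre property of $\phi^*$, apply the inverse function theorem to $\nabla\phi$ using the invertibility of $\nabla^2\phi(\x)$, identify the local $\mathcal{C}^1$ inverse with $\nabla\phi^*$, and conclude that $\nabla^2\phi^*(\nabla\phi(\x))=(\nabla^2\phi(\x))^{-1}$ is positive definite. Your write-up is somewhat more explicit about matching the local inverse with the global one and about surjectivity of $\nabla\phi$ onto $\intr(\dom\phi^*)$, but the argument is the same.
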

\begin{proof}
By Lemma~\ref{lem:legendre_props} we know that $\phi^*\in \Gamma_0(\bR^m)$ is Legendre. By assumption $\map{\nabla \phi}{\intr (\dom\phi)}{\intr (\dom\phi^*)}$ is continuously differentiable on $\intr(\dom \phi)$ with derivative $\nabla^2 \phi(\x)$ invertible for any $\x \in \intr(\dom \phi)$. Thus, by the inverse function theorem for any $\x \in \intr(\dom \phi)$ there exist open neighborhoods $V$ of $\x$ and $U$ of $\nabla \phi(\x)$ such that locally $\map{(\nabla \phi)^{-1}}{U}{V}$ is continuously differentiable with derivative $\nabla ((\nabla \phi)^{-1}) (\nabla\phi(\x)) = (\nabla^2 \phi(\x))^{-1}$.
Since $(\nabla \phi)^{-1} = \nabla \phi^*$ and $(\nabla^2 \phi(\x))^{-1}$ is positive definite, the assertion follows.
\qed
\end{proof}

For examples of typical Legendre functions (e.g. Boltzmann--Shannon, Burg's or Fermi-Dirac entropy, Hellinger, Fractional Power) as well as their convex conjugates and derivatives we refer to \cite[Example 2.2]{bauschke2019linear}. More examples can be found in \cite{Bre67,Teboulle92,Eckstein93,bauschke1997legendre,bauschke2001essential}. In particular, we highlight that the Legendre function $\phi(\x)=(1/p)|\x|^p$, $p>1$, is not very strictly convex for $p\neq 2$ and not even $\mathcal{C}^2$ if $p\in\left]1,2\right[$. The class of Legendre functions induces favorable properties for the following generalized distance-like measure.
\begin{definition}[Bregman distance] \label{def:bregman_dist}
Let $\phi \in \Gamma_0(\bR^m)$ be Legendre. Then, the Bregman distance $\map{\D[\phi]}{\bR^m \times \bR^m}{\exR}$ generated by the kernel $\phi$ is defined by
\begin{align}
\D[\phi](\x,\y) =  \begin{cases}\phi(\x)- \phi(\y)-\iprod{\nabla \phi(\y)}{\x-\y}, & \text{if $\y \in \intr (\dom \phi)$,} \\
+\infty, & \text{otherwise.}
\end{cases}
\end{align}
\end{definition}
\begin{lemma} \label{lem:bregman_props}
Let $\phi \in \Gamma_0(\bR^m)$ be Legendre. Then the following properties hold for the Bregman distance $\D[\phi](\cdot,\cdot)$ induced by $\phi$:
\begin{enumerate}
\item[\rm (i)] For all $\x \in \bR^m$ and $\y \in \intr(\dom \phi)$ we have $\D[\phi](\x,\y) = 0 \iff \x = \y$, {\rm\cite[Theorem 3.7(iv)]{bauschke1997legendre}}.
\item[\rm (ii)] For all $\x,\y \in \intr(\dom \phi)$ we have $\D[\phi](\x,\y) = \D[\phi^*](\nabla \phi(\y), \nabla \phi(\x))$, {\rm\cite[Theorem 3.7(v)]{bauschke1997legendre}}.
\item[\rm (iii)] If $\phi$ is very strictly convex for any compact and convex $K \subset \intr(\dom \phi)$ there exist positive scalars $\Theta,\theta>0$ such that
$$
\frac{\theta}{2}\|\x-\y\|^2 \leq \D[\phi](\x,\y) \leq \frac{\Theta}{2}\|\x-\y\|^2,
$$
for any $\x, \y \in K$, {\rm\cite[Proposition 2.10]{bauschke2000dykstras}}.
\end{enumerate}
\end{lemma}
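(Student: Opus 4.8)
The three items are standard consequences of the Legendre structure, and I would establish them in the order (ii), (i), (iii), since the computation for (ii) is purely algebraic while (i) and (iii) require a convexity argument and a smoothness argument respectively. In each case a short self-contained derivation is available, so I would give those rather than merely defer to the cited references.

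For (ii) I would argue by direct substitution using the Fenchel--Young equality. Fix $\x, \y \in \intr(\dom\phi)$ and set $\x^* = \nabla\phi(\x)$, $\y^* = \nabla\phi(\y)$; by Lemma~\ref{lem:legendre_props}(iii) these lie in $\intr(\dom\phi^*)$ with $\nabla\phi^*(\x^*) = \x$ and $\nabla\phi^*(\y^*) = \y$. At conjugate points Fenchel--Young holds with equality, namely $\phi(\x) + \phi^*(\x^*) = \iprod{\x}{\x^*}$ and likewise for $\y$. Expanding
\begin{align*}
\D[\phi^*](\y^*, \x^*) = \phi^*(\y^*) - \phi^*(\x^*) - \iprod{\nabla\phi^*(\x^*)}{\y^* - \x^*}
\end{align*}
and substituting these identities for $\phi^*(\x^*)$, $\phi^*(\y^*)$, and $\nabla\phi^*(\x^*) = \x$, the terms $\pm\iprod{\x}{\x^*}$ cancel and one is left precisely with $\phi(\x) - \phi(\y) - \iprod{\y^*}{\x - \y} = \D[\phi](\x,\y)$. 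This step is routine and I expect no obstacle.

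For (i) the implication $\x = \y \Rightarrow \D[\phi](\x,\y) = 0$ is immediate from the definition. For the converse, since $\y \in \intr(\dom\phi) = \dom\partial\phi$ by Lemma~\ref{lem:legendre_props}(i), convexity of $\phi$ gives $\D[\phi](\x,\y) \geq 0$, as $\phi$ lies above its supporting hyperplane at $\y$. If $\D[\phi](\x,\y) = 0$, then $\phi$ agrees with that affine minorant at $\x$, hence, by convexity, on the entire segment $[\y,\x]$. The point requiring care, which I regard as the \emph{main obstacle}, is to convert this into a contradiction with essential strict convexity: since $\intr(\dom\phi)$ is open and contains $\y$, a nondegenerate initial subsegment of $[\y,\x]$ lies in $\intr(\dom\phi) = \dom\partial\phi$, which is convex; on this subsegment $\phi$ is affine, contradicting strict convexity on it unless the subsegment reduces to a point, i.e. $\x = \y$.

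For (iii) I would assume $\phi$ is very strictly convex, so $\phi$ is $\mathcal{C}^2$ with $\nabla^2\phi$ positive definite on $\intr(\dom\phi)$. On the compact set $K$ the map $\z \mapsto \nabla^2\phi(\z)$ is continuous, so its smallest and largest eigenvalues attain a positive minimum $\theta > 0$ and a finite maximum $\Theta$ over $K$. Writing the second-order Taylor expansion with integral remainder,
\begin{align*}
\D[\phi](\x,\y) = \int_0^1 (1-t)\, \iprod{\x-\y}{\nabla^2\phi(\y + t(\x-\y))(\x-\y)}\, dt,
\end{align*}
and using that convexity of $K$ keeps $\y + t(\x-\y) \in K$ for $t \in [0,1]$, the integrand is bounded below by $\theta\|\x-\y\|^2$ and above by $\Theta\|\x-\y\|^2$; since $\int_0^1 (1-t)\,dt = 1/2$, this yields the claimed two-sided bound. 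The only care needed here is the compactness argument guaranteeing $\theta > 0$.
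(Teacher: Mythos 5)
Your three arguments are all correct. Note that the paper itself gives no proof of this lemma: each item is simply attributed to the literature (items (i) and (ii) to Bauschke--Borwein, item (iii) to Bauschke--Lewis), so there is no in-paper argument to compare against. Your self-contained derivations are the standard ones behind those citations: the Fenchel--Young cancellation for (ii) is exactly the usual computation; for (i) the reduction to ``$\phi$ affine on a nondegenerate subsegment of $[\y,\x]$ inside $\dom\partial\phi = \intr(\dom\phi)$, contradicting essential strict convexity'' is the right way to use the Legendre hypothesis (and you correctly identify this as the only delicate step --- one may also note that $\D[\phi](\x,\y)=0$ forces $\phi(\x)<\infty$, so the segment issue is only about strictness, not finiteness); and the integral-remainder Taylor bound with a compactness argument for the extreme eigenvalues of $\nabla^2\phi$ over $K$ is precisely how the two-sided quadratic estimate in (iii) is obtained. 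The only cosmetic slip is in (iii), where the integrand is bounded by $(1-t)\,\theta\|\x-\y\|^2$ and $(1-t)\,\Theta\|\x-\y\|^2$ rather than by the constants themselves; the factor $\int_0^1(1-t)\,dt = 1/2$ you invoke makes clear you intended this, and the conclusion is unaffected.
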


It should be noted that for the remainder of this paper, we have not made standing assumptions. Instead, we explicitly state the assumptions in each theorem separately.

\section{Bregman Proximal Mappings and Moreau Envelopes} \label{sec:bregman_prox}
\subsection{Definition, Properness and Continuity} \label{sec:continuity}
We define the left Bregman--Moreau envelope and proximal mapping with step-size parameter $\lambda>0$ according to \cite{kan2012moreau} or \cite{bauschke2017regularizing} for the convex setting.
\begin{definition}[Left Bregman--Moreau envelope and proximal mapping]
Let $\phi \in \Gamma_0(\bR^m)$ be Legendre and $\map{f}{\bR^m}{\exR}$ be proper. For some $\lambda > 0$ and $\y \in \bR^m$ we define the left Bregman--Moreau envelope (in short: left envelope) of $f$ at $\y$ as
\begin{align} \label{eq:left_env}
\lenv{\phi}{f}{\lambda}(\y)&=\inf_{\x \in \bR^m} f(\x) + \frac{1}{\lambda} \D[\phi](\x,\y),
\end{align}
and the associated left Bregman proximal mapping (in short: left prox) of $f$ at $\y$ as
\begin{align} \label{eq:left_prox}
\lprox{\phi}{f}{\lambda}(\y)=\argmin_{\x \in \bR^m} f(\x) + \frac{1}{\lambda} \D[\phi](\x,\y).
\end{align}
\end{definition}
From the definition, it is clear that 
$\dom(\lprox{\phi}{f}{\lambda}) \subset \intr (\dom \phi)$ and \mynewline$\dom(\lenv{\phi}{f}{\lambda}) \subset \intr (\dom \phi)$.

The set $\lprox{\phi}{f}{\lambda}(\y)$ is possibly empty in the nonconvex setting. A sufficient condition which guarantees that the Bregman proximal mapping is non-empty is prox-boundedness, which we adapt from \cite[Definition 2.3]{kan2012moreau}.
\begin{definition}[Relative prox-boundedness]
Let $\phi \in \Gamma_0(\bR^m)$ be Legendre and $\map{f}{\bR^m}{\exR}$ be proper. We say $f$ is prox-bounded relative to $\phi$ if there exists $\lambda > 0$ such that $\lenv{\phi}{f}{\lambda}(\y)> -\infty$ for some $\y\in \intr(\dom \phi)$. The supremum of the set of all such $\lambda$ is the threshold $\lambda_f$ of the prox-boundedness, i.e.
$$
\lambda_f = \sup \big\{ \lambda > 0 : \exists\, \y \in \intr(\dom \phi) : \lenv{\phi}{f}{\lambda}(\y)> -\infty \big\}.
$$
\end{definition}

Prox-boundedness also allows us to extract a continuity property for both the Bregman proximal mapping and the Bregman--Moreau envelope. The following result summarizes important properties of the left envelope from~\cite{kan2012moreau}.

\begin{lemma}[Continuity properties of the left prox and envelope]  \label{lem:continuity_left}
Let $\phi \in \Gamma_0(\bR^m)$ be Legendre and super-coercive and $\map{f}{\bR^m}{\exR}$ be proper, lsc and relatively prox-bounded with threshold $\lambda_f$ and let $\lambda \in \left]0,\lambda_f\right[$. Assume that $\dom \phi \cap \dom f \neq \emptyset$. Then $\lprox{\phi}{f}{\lambda}$ and $\lenv{\phi}{f}{\lambda}$ have the following properties:
\begin{enumerate}
\item[\rm (i)] $\lprox{\phi}{f}{\lambda}(\y) \neq \emptyset$ is compact for all $\y \in \intr (\dom \phi)$ and the envelope $\lenv{\phi}{f}{\lambda}$ is proper, {\rm\cite[Theorem 2.2(i)]{kan2012moreau}}.
\item[\rm (ii)] The envelope $\lenv{\phi}{f}{\lambda}$ is continuous on $\intr (\dom \phi)$, {\rm\cite[Corollary 2.2]{kan2012moreau}}.
\item[\rm (iii)] For any sequence $\y^\nu \to \y^* \in \intr (\dom \phi)$ and $\x^\nu \in \lprox{\phi}{f}{\lambda}(\y^\nu)$ we have $\{\x^\nu\}_{\nu \in \bN}$ is bounded and all its cluster points $\x^*$ lie in $\lprox{\phi}{f}{\lambda}(\y^*)$, {\rm\cite[Corollary 2.4]{kan2012moreau}}.
\end{enumerate}
\end{lemma}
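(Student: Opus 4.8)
The plan is to derive all three items from the classical Weierstrass principle together with a single coercivity estimate that plays the role of the Euclidean ``prox-boundedness implies level-boundedness'' argument. Fix $\y \in \intr(\dom\phi)$ and set $g_\y(\x) := f(\x) + \tfrac1\lambda\D[\phi](\x,\y)$. Since $f$ is lsc and $\phi$ is lsc, $g_\y$ is lsc, and since $\dom\phi \cap \dom f \neq \emptyset$ with $\y \in \intr(\dom\phi)$, it is proper. The main obstacle, and the step I would attack first, is level-boundedness of $g_\y$. Because $\lambda < \lambda_f$, the definition of the threshold furnishes $\bar\lambda \in \left]\lambda,\lambda_f\right[$ and $\bar\y \in \intr(\dom\phi)$ with $\beta := \inf_\x[f(\x) + \tfrac1{\bar\lambda}\D[\phi](\x,\bar\y)] > -\infty$. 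Expanding both Bregman distances and collecting the $\phi$-terms yields
\begin{align*}
g_\y(\x) = \Big[f(\x) + \tfrac1{\bar\lambda}\D[\phi](\x,\bar\y)\Big] + \Big(\tfrac1\lambda - \tfrac1{\bar\lambda}\Big)\phi(\x) + \iprod{a_\y}{\x} + c_\y,
\end{align*}
with $a_\y = \tfrac1{\bar\lambda}\nabla\phi(\bar\y) - \tfrac1\lambda\nabla\phi(\y)$ and $c_\y \in \bR$ constant in $\x$. The first bracket is bounded below by $\beta$, the coefficient $\tfrac1\lambda - \tfrac1{\bar\lambda}$ is strictly positive, and $\phi$ is super-coercive, so the super-coercive term dominates the affine term $\iprod{a_\y}{\x}$; hence $g_\y(\x)\to+\infty$ as $\|\x\|\to\infty$, i.e.\ $g_\y$ is level-bounded. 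Weierstrass then gives item (i): the minimum is attained, $\lprox{\phi}{f}{\lambda}(\y)$ is nonempty and compact, and $\lenv{\phi}{f}{\lambda}(\y)=\min g_\y$ is finite on $\intr(\dom\phi)$, so the envelope is proper.

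For item (iii) I would reuse this estimate to obtain uniform boundedness. Given $\y^\nu \to \y^* \in \intr(\dom\phi)$ and $\x^\nu \in \lprox{\phi}{f}{\lambda}(\y^\nu)$, fixing any $\x_0 \in \dom\phi\cap\dom f$ and using continuity of $\phi$ and $\nabla\phi$ on $\intr(\dom\phi)$ gives $\lenv{\phi}{f}{\lambda}(\y^\nu) \leq f(\x_0) + \tfrac1\lambda\D[\phi](\x_0,\y^\nu) \leq C$ for all large $\nu$. Inserting $\x^\nu$ into the displayed decomposition with $\y=\y^\nu$, and noting that $a_{\y^\nu}$ and $c_{\y^\nu}$ stay bounded because $\y^\nu$ converges, I obtain $(\tfrac1\lambda - \tfrac1{\bar\lambda})\phi(\x^\nu) \leq C - \beta + \|a_{\y^\nu}\|\,\|\x^\nu\| + \mathcal{O}(1)$; super-coercivity of $\phi$ then rules out $\|\x^\nu\|\to\infty$, so $\{\x^\nu\}$ is bounded. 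For a cluster point $\x^{\nu_k}\to\x^*$, minimality gives $f(\x^{\nu_k}) + \tfrac1\lambda\D[\phi](\x^{\nu_k},\y^{\nu_k}) \leq f(\x) + \tfrac1\lambda\D[\phi](\x,\y^{\nu_k})$ for every $\x$. Passing to the limit, using lsc of $f$ and the fact that $\D[\phi]$ is lsc at points whose second argument lies in $\intr(\dom\phi)$ (the $\y$-terms $\phi(\y^{\nu_k}),\nabla\phi(\y^{\nu_k})$ converge while $\phi(\x^{\nu_k})$ is handled by lsc) on the left, and continuity of $\y\mapsto\D[\phi](\x,\y)$ on the right, yields $f(\x^*) + \tfrac1\lambda\D[\phi](\x^*,\y^*) \leq \lenv{\phi}{f}{\lambda}(\y^*)$, whence $\x^*\in\lprox{\phi}{f}{\lambda}(\y^*)$.

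Finally, item (ii) follows by combining upper and lower semicontinuity. Upper semicontinuity is immediate: $\lenv{\phi}{f}{\lambda}$ is a pointwise infimum over $\x$ of the maps $\y\mapsto f(\x) + \tfrac1\lambda\D[\phi](\x,\y)$, each continuous on $\intr(\dom\phi)$, and an infimum of continuous functions is usc. For lower semicontinuity I would take $\y^\nu\to\y^*$ and $\x^\nu\in\lprox{\phi}{f}{\lambda}(\y^\nu)$, which is bounded by (iii); along a subsequence realizing $\liminf_\nu\lenv{\phi}{f}{\lambda}(\y^\nu)$ and converging to some $\x^*$, the same lower semicontinuity of the integrand gives $\lenv{\phi}{f}{\lambda}(\y^*) \leq f(\x^*) + \tfrac1\lambda\D[\phi](\x^*,\y^*) \leq \liminf_\nu\lenv{\phi}{f}{\lambda}(\y^\nu)$, which together with usc gives continuity on $\intr(\dom\phi)$. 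Throughout, the genuinely delicate point is the coercivity decomposition of the first paragraph, since it is where super-coercivity of $\phi$ and the asymmetry of the Bregman distance must be reconciled; every remaining step is a routine adaptation of the Euclidean infimal-projection arguments.
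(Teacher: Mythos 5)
Your proof is correct. The paper itself does not prove this lemma but imports it from \cite[Theorem 2.2, Corollary 2.2, Corollary 2.4]{kan2012moreau}, and your argument is essentially the one underlying those citations: the decomposition $\frac{1}{\lambda}\D[\phi](\x,\y)=\frac{1}{\bar\lambda}\D[\phi](\x,\bar\y)+(\frac{1}{\lambda}-\frac{1}{\bar\lambda})\phi(\x)+\iprod{a_\y}{\x}+c_\y$ is exactly how relative prox-boundedness plus super-coercivity of $\phi$ yields level-boundedness of $\x\mapsto f(\x)+\frac{1}{\lambda}\D[\phi](\x,\y)$, locally uniformly in $\y$, after which (i)--(iii) follow by the standard Weierstrass and parametric-minimization arguments (cf.\ \cite[Theorems 1.9 and 1.17]{RoWe98}) that you reproduce by hand.
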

Note that in general the left Bregman--Moreau envelope is not lsc relative to $\bR^m$, cf. \cite[Remark 3.6]{bauschke2006joint}.

We complement the results of \cite{kan2012moreau} by stating equivalent characterizations of relative prox-boundedness.
To this end, we need the following lemma, which is analogous to \cite[Exercise 1.14]{RoWe98}.
\begin{lemma} \label{lem:relative_growth_equivalence}
Let $\map{\phi}{\bR^m}{\exR}$ be proper, lsc and coercive with $\dom \phi = \bR^m$ and let $\map{f}{\bR^m}{\exR}$ be proper and lsc. Then we have the identity
$$
\liminf_{\|\x\|\to\infty} \frac{f(\x)}{\phi(\x)} = \sup \set{ \gamma\in \bR \setsep \exists \, \beta \in \bR \ \text{with}\ f(\x) \geq \gamma\phi(\x) + \beta\ \text{for all}\ \x\in \bR^m}.
$$
\end{lemma}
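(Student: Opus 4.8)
The plan is to prove the two inequalities between $L := \liminf_{\norm{\x}\to\infty} f(\x)/\phi(\x)$ and the supremum $S$ on the right-hand side. Before either direction, I would record the consequence of coercivity that I use repeatedly: since $\phi(\x)\to\infty$ as $\norm{\x}\to\infty$, there is a radius $R_0$ with $\phi(\x)>0$ for all $\norm{\x}\geq R_0$, so the quotient $f/\phi$ is well-defined for large $\norm{\x}$ and dividing an inequality by $\phi(\x)$ preserves its direction there.

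For $S\leq L$, I would fix any admissible $\gamma$, that is, one for which some $\beta$ satisfies $f(\x)\geq\gamma\phi(\x)+\beta$ for all $\x$. Restricting to $\norm{\x}\geq R_0$ and dividing by $\phi(\x)>0$ gives $f(\x)/\phi(\x)\geq\gamma+\beta/\phi(\x)$; since $\phi(\x)\to\infty$ the correction term vanishes, whence $L\geq\gamma$. Taking the supremum over all admissible $\gamma$ yields $S\leq L$, and if no $\gamma$ is admissible then $S=-\infty\leq L$ trivially.

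For the reverse inequality $L\leq S$, which is the substantive direction, I would assume $L>-\infty$ (otherwise there is nothing to prove), fix an arbitrary $\gamma<L$, and choose an intermediate $\gamma'\in\left]\gamma,L\right[$. By definition of the liminf there is a radius $R\geq R_0$ with $f(\x)/\phi(\x)\geq\gamma'$, hence $f(\x)\geq\gamma'\phi(\x)$, for every $\norm{\x}\geq R$. Consequently $f(\x)-\gamma\phi(\x)\geq(\gamma'-\gamma)\phi(\x)\to+\infty$ as $\norm{\x}\to\infty$, so $f-\gamma\phi$ is bounded below outside the ball $\norm{\x}\leq R$. It then remains only to bound $f-\gamma\phi$ from below on the compact ball $\norm{\x}\leq R$; taking $\beta$ to be the minimum of $0$ and that lower bound exhibits $\gamma$ as admissible, so $\gamma\leq S$, and letting $\gamma\uparrow L$ concludes.

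The main obstacle is exactly this lower bound on the compact ball. Because $f$ is lsc it is bounded below there, and when $\gamma\leq 0$ the term $-\gamma\phi$ is a nonnegative multiple of the lsc function $\phi$, so $f-\gamma\phi$ is lsc and attains its minimum on the compact set. The delicate case is $\gamma>0$, where $-\gamma\phi$ is merely usc: here I would invoke that $\phi$ is bounded above on the compact ball, estimating $f(\x)-\gamma\phi(\x)\geq\min_{\norm{\x}\leq R}f(\x)-\gamma\,\sup_{\norm{\x}\leq R}\phi(\x)>-\infty$. This boundedness of $\phi$ on bounded sets is what really carries the argument; it holds automatically for the Legendre kernels of interest, which are finite convex functions on $\bR^m$ and hence continuous, whereas lower semicontinuity of $\phi$ alone would not suffice to control its supremum over the ball.
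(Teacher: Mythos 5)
Your proof is correct and follows the same overall architecture as the paper's: the easy inequality is obtained by dividing the affine minorization by $\phi(\x)>0$ for large $\norm{\x}$ and letting the $\beta/\phi(\x)$ term vanish, and the substantive inequality by establishing $f\geq\gamma'\phi$ outside a large set and then bounding $f-\gamma\phi$ from below on the remaining compact piece. The differences lie in how that compact-set bound is obtained, and they are worth recording. The paper works with the sublevel set $\lev_{\leq r}\phi$ rather than a Euclidean ball, derives the exterior inequality by contradiction with the definition of the $\liminf$, and then simply asserts that $h:=f-\gamma\phi$ is ``proper lsc'' so that \cite[Corollary 1.10]{RoWe98} applies on the compact sublevel set. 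When $\gamma>0$ that assertion is not justified by the stated hypotheses, since $-\gamma\phi$ is then only upper semicontinuous; your case split ($\gamma\leq 0$ versus $\gamma>0$) isolates exactly this difficulty, and your remark that the $\gamma>0$ case really uses boundedness of $\phi$ on bounded sets is on point. Indeed, a proper, lsc, coercive $\phi$ with full domain can blow up on a compact set --- take $m=1$, $\phi(\x)=1/\abs{\x}+\x^2$ for $\x\neq 0$, $\phi(0)=0$, and $f(\x)=\x^2$, for which the left-hand side equals $1$ while the supremum equals $0$ --- so the identity as literally stated needs this extra local boundedness, which the paper's proof tacitly (and incorrectly, via the lsc claim) assumes. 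Since the lemma is only ever invoked with $\phi$ a finite convex Legendre function on $\bR^m$, hence continuous, the gap is harmless in context, but your version of the argument is the one that actually closes it; the only cosmetic improvement would be to state the local boundedness of $\phi$ as an explicit hypothesis rather than as a remark at the end.
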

\begin{proof}
Note that since $\phi$ is coercive, we have $\phi(\x) \to \infty$, whenever $\|x\| \to \infty$. Let $\bar \gamma := \liminf_{\|\x\|\to\infty} f(\x)/\phi(\x)$ and 
$$
\gamma\in\Gamma:=\set{ \gamma\in \bR \setsep \exists \, \beta \in \bR \ \text{with}\ f(\x) \geq \gamma\phi(\x) + \beta\ \text{for all}\ \x\in \bR^m}.
$$
This means that there exists $\beta$ such that $f(\x) \geq \gamma\phi(\x) + \beta$, for all $\x\in \bR^m$. Dividing by $\phi(\x) > 0$ which holds for $\|\x\| >t$, for some $t>0$, and taking the $\liminf$ on both sides yields $\liminf_{\|\x\|\to\infty} f(\x)/\phi(\x) \geq \gamma + 0$, meaning that $\bar \gamma \geq \gamma$.
Now let $\gamma \in \bR$ with $-\infty <\gamma < \bar \gamma \leq +\infty$ be finite. Suppose that for any compact level set $\emptyset \neq \lev_{\leq r} \phi:= \{\x \in \bR^m : \phi(\x) \leq r \}$ with $r > 0$ there exists $\x \in (\lev_{\leq r} \phi)^c \neq \emptyset$ in the complement of $\lev_{\leq r} \phi$, which is non-empty due to the coercivity of $\phi$, with $f(\x) < \gamma \phi(\x)$. In particular this means that there is a sequence $\x^\nu \in \bR^m$ with $\phi(\x^\nu) \to \infty$ and $f(\x^\nu)/\phi(\x^\nu) < \gamma$. Taking the $\liminf$ on both sides implies due to the coercivity of $\phi$ that
$$
\bar \gamma = \lim_{r\to \infty} \left( \inf_{r < \|\x\|} \frac{f(\x)}{\phi(\x)} \right) \leq \lim_{\nu\to \infty} \frac{f(\x^\nu)}{\phi(\x^\nu)} \leq \gamma < \bar \gamma,
$$
which is a contradiction. This means that there is $r > 0$ such that for any $\x \in ( \lev_{\leq r} \phi)^c \neq \emptyset$ we have
$
f(\x) \geq \gamma \phi(\x).
$
By assumption $h:=f - \gamma \phi$ is proper lsc. In view of \cite[Corollary 1.10]{RoWe98}, $h$ is bounded below on $\lev_{\leq r} \phi$, showing that for some $\beta \in \bR$ sufficiently small, it holds $f(\x) \geq \gamma \phi(\x) + \beta$ for any $\x \in \lev_{\leq r} \phi$. Overall we have
$
f(\x) \geq \gamma \phi(\x) + \beta,
$
for all $\x \in \bR^m$. This shows the result.
\qed
\end{proof}

The following proposition adapts \cite[Exercise 1.24]{RoWe98} to a Bregman distance setting.
\begin{proposition}[Characterization of relative prox-boundedness] \label{prop:relative_prox_bounded_equivalence}
  Let $\phi\in \Gamma_0(\bR^m)$ be Legendre and super-coercive and let $\map{f}{\bR^m}{\exR}$ be proper and lsc with $\dom f \cap \dom \phi \neq \emptyset$. Then, the following properties are equivalent:
  \begin{enumerate}
    \item[\rm(i)]\label{prop:relative_prox_bounded_equivalence_A} $f$ is prox-bounded relative to $\phi$.
    \item[\rm(ii)]\label{prop:relative_prox_bounded_equivalence_B} for some $r>0$ the function $f+r \phi$ is bounded from below on $\bR^m$.
    \end{enumerate}
If futhermore $\dom \phi = \bR^m$ the above properties are equivalent to 
\begin{align}\label{eq:relative_prox_bounded_equivalence_C} 
\liminf_{\|\x\|\to\infty} \frac{f(\x)}{\phi(\x)}> - \infty.
\end{align}
\end{proposition}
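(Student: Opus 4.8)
The plan is to unfold the definition of relative prox-boundedness into a lower-bound statement and then to use super-coercivity to absorb the linear term that the Bregman distance inevitably produces. Writing $r = 1/\lambda$ and fixing $\y \in \intr(\dom\phi)$, the condition $\lenv{\phi}{f}{\lambda}(\y) > -\infty$ unfolds, through the definition of $\D[\phi]$, into the existence of a constant $c \in \bR$ with
$$
f(\x) + r\phi(\x) \geq c + r\iprod{\nabla\phi(\y)}{\x} \quad\text{for all } \x \in \bR^m.
$$
Thus, up to constants, relative prox-boundedness says that $f + r\phi$ admits an \emph{affine} minorant with linear part $r\nabla\phi(\y)$, whereas (ii) asks for a \emph{constant} lower bound. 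The bridge between the two is the elementary observation that, for $\phi$ super-coercive, any positive multiple $s\phi$ plus an arbitrary linear functional is again coercive (the super-coercive part dominates the linear one) and, being proper and lsc, is therefore bounded below; equivalently, by Lemma~\ref{lem:legendre_props}(iv) we have $\dom\phi^* = \bR^m$, so the Fenchel--Young inequality yields $s\phi(\x) \geq \iprod{a}{\x} - (s\phi)^*(a)$ for every slope $a$.

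For (i)$\Rightarrow$(ii) I would start from the displayed bound with $r_0 = 1/\lambda$ and then, for any $r > r_0$, split $f + r\phi = (f + r_0\phi) + (r - r_0)\phi$, so that $f(\x) + r\phi(\x) \geq c + r_0\iprod{\nabla\phi(\y)}{\x} + (r-r_0)\phi(\x)$ for all $\x$; the super-coercive summand $(r-r_0)\phi$ dominates the linear term, so the right-hand side is bounded below and (ii) follows. For (ii)$\Rightarrow$(i) I would pick any $\y_0 \in \intr(\dom\phi)$ (nonempty since $\phi$ is Legendre) and any $\lambda$ with $1/\lambda > r$; expanding $\D[\phi](\cdot,\y_0)$ as above and combining $f + r\phi \geq c$ with super-coercivity of $(1/\lambda - r)\phi$ to swallow the slope $\nabla\phi(\y_0)$ shows the envelope at $\y_0$ is bounded below, while $\dom f \cap \dom\phi \neq \emptyset$ keeps it below $+\infty$, giving $\lenv{\phi}{f}{\lambda}(\y_0) > -\infty$. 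I expect the main obstacle to be exactly this point: one cannot retain the same constant $r = 1/\lambda$ in both statements, since prox-boundedness only controls $f + r\phi$ up to a linear term (one can build a prox-bounded $f$ for which $f + r\phi$ is unbounded below at the critical $r$), so the argument must genuinely enlarge $r$ (shrink $\lambda$) and lean on super-coercivity to kill the residual slope.

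For the final equivalence, the extra hypothesis $\dom\phi = \bR^m$ makes $\phi$ proper, lsc and coercive, so Lemma~\ref{lem:relative_growth_equivalence} applies and identifies $\liminf_{\|\x\|\to\infty} f(\x)/\phi(\x)$ with $\sup\Gamma$, where $\Gamma = \set{ \gamma \in \bR \setsep \exists\, \beta\in\bR,\ f(\x) \geq \gamma\phi(\x) + \beta\ \text{for all}\ \x}$. Condition \eqref{eq:relative_prox_bounded_equivalence_C} says precisely $\sup\Gamma > -\infty$, i.e.\ $\Gamma \neq \emptyset$, while (ii) says $\Gamma$ contains some negative $\gamma = -r$. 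These two are equivalent because $\phi$ is bounded below (coercive, proper, lsc): for $\gamma' \leq \gamma$ the inequality $f \geq \gamma\phi + \beta$ relaxes to $f \geq \gamma'\phi + \beta + (\gamma - \gamma')\inf\phi$, so $\Gamma$ is closed downward and its nonemptiness forces it to contain arbitrarily negative values, in particular a negative one. This closes the cycle.
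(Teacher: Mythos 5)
Your proof is correct, and for the core equivalence (i)$\iff$(ii) it takes a genuinely different route from the paper. The paper's trick is to evaluate everything at the distinguished base point $\nabla \phi^*(0)$ (which exists because super-coercivity gives $\dom \phi^* = \bR^m$): there the Bregman distance degenerates to $\phi(\x) - \phi(\nabla\phi^*(0))$ with no linear term, so (i) and (ii) become the same statement up to an additive constant, with the \emph{same} modulus $r = 1/\lambda$. The price is that for (i)$\Rightarrow$(ii) the paper must know the envelope is finite at that particular point, which it gets by citing the properness and continuity of the envelope on all of $\intr(\dom\phi)$ for $\lambda$ below the threshold (Lemma~\ref{lem:continuity_left}). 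You instead work at an arbitrary base point, accept the affine minorant with slope $r_0\nabla\phi(\y)$ that this produces, and absorb that slope by strictly enlarging $r$ (resp.\ taking $1/\lambda > r$) and invoking super-coercivity via Fenchel--Young; this is self-contained and avoids the appeal to the continuity lemma, at the cost of not preserving the constant --- which, as you correctly note, cannot be preserved in general anyway. Your side remark about the critical modulus is accurate and explains why the enlargement is genuinely needed in your formulation. For the final equivalence with \eqref{eq:relative_prox_bounded_equivalence_C} your argument coincides with the paper's: both reduce to Lemma~\ref{lem:relative_growth_equivalence} and the observation that the set $\Gamma$ is downward closed because the coercive, proper, lsc function $\phi$ is bounded below (the paper phrases this as choosing $r > \max\{0,-\gamma\}$ so that $(r+\gamma)\phi + \beta$ is bounded below). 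No gaps.
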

\begin{proof}
The equivalence between (i) and (ii) follows thanks to the properness of the envelope function from Lemma~\ref{lem:continuity_left}: 

(i) $\implies$ (ii): Let $f$ be prox-bounded relative to $\phi$ with threshold $\lambda_f > 0$. In view of Lemma~\ref{lem:continuity_left}(ii), we have for any $\lambda \in \left]0,\lambda_f\right[$ that 
$$
{\lenv{\phi}{f}{\lambda}}{\left(\nabla \phi^*(0)\right)} = \inf_{\x \in \bR^m} f(\x) + \frac{1}{\lambda} \phi(\x) -  \frac{1}{\lambda} \phi(\nabla \phi^*(0)) > -\infty.
$$
(ii) $\implies$ (i): Let $r>0$. Then there exists $\beta \in \bR$ such that $f(\x) + r \phi(\x) \geq \beta$ for any $\x \in \bR^m$. Adding $-r\phi(\nabla \phi^*(0))$ to both sides of the inequality yields
$$
f(\x) + r\D[\phi](\x, \nabla \phi^*(0)) \geq \beta -r\phi(\nabla \phi^*(0)),
$$
for all $\x \in \bR^m$ and the assertion follows for $\y := \nabla \phi^*(0)$ and $\lambda:= 1/r$.

To show the remaining statement, assume that $\dom \phi=\bR^m $ and let \eqref{eq:relative_prox_bounded_equivalence_C} hold. In view of Lemma~\ref{lem:relative_growth_equivalence}, we have that
$$
\sup \set{ \gamma\in \bR \setsep \exists \, \beta \in \bR \ \text{with}\ f(\x) \geq \gamma\phi(\x) + \beta\ \text{for all}\ \x\in \bR^m} > -\infty.
$$
Then there exists a finite $+\infty > \gamma>-\infty$ such that $f(x) \geq \gamma\phi(x) + \beta$ holds for some $\beta\in\bR$ and any $\x\in \bR^m$. For $r>\max\{0,-\gamma\}$, we have that $r+\gamma\geq 0$ and
$$
 f+r\phi \geq (r+\gamma) \phi+\beta>-\infty,
$$
since $\phi \in \Gamma_0(\bR^m)$ is coercive and therefore bounded from below on $\bR^m$, meaning we have~(ii).
  
Assume (ii) holds. By assumption there is some $\beta \in \bR^m$ such that for any $\x \in \bR^m$ we have:
$$
f(\x) > -r\phi(\x) + \beta.
$$
Let $\Gamma := \set{ \gamma\in \bR \setsep \exists \, \beta \in \bR \ \text{with}\ f(\x) \geq \gamma\phi(\x) + \beta\ \text{for all}\ \x\in \bR^m}$. \mynewline
Then $-r \in \Gamma$ and in view of Lemma~\ref{lem:relative_growth_equivalence}, we have $\liminf_{\|\x\|\to\infty} f(\x)/\phi(\x) =\sup \Gamma > -\infty$.
\qed
\end{proof}

For using the left Bregman proximal mapping in an algorithm, well-definedness is crucial, i.e. the output of one iteration must be compatible with the input of the next iteration. Usually, this can be achieved by the property 
$$
{\ran }\big(\lprox{\phi}{f}{\lambda}\big) \subset \intr (\dom \phi),
$$
which, however, requires a \emph{constraint qualification} (CQ):
\begin{lemma} \label{lem:prox_interior}
Let $\lambda > 0$, $\phi \in \Gamma_0(\bR^m)$ be Legendre and $f:\bR^m \to \exR$ be proper lsc. Assume that $\dom \phi \cap \dom f \neq \emptyset$ and that the following constraint qualification holds: 
\begin{align} \label{eq:qualification_interior}
\partial^\infty f(\x) \cap -N_{\dom \phi}(\x)=\{0\},
\end{align}
for any $\x \in \dom f \cap \dom \phi$. Then we have that 
\begin{align}
{\ran} {\big(\lprox{\phi}{f}{\lambda}\big)} &\subset {\ran} {\left((\partial(\phi + \lambda f))^{-1}\circ \nabla \phi\right)} \\
&\subset {\ran} {\left((\partial(\phi + \lambda f))^{-1}\right)} \subset \intr (\dom \phi).
\end{align}
\end{lemma}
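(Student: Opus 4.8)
The plan is to prove the three inclusions separately; the first two are subdifferential calculus and require no constraint qualification, while the last one is where the hypothesis \eqref{eq:qualification_interior} is decisive.

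First I would fix $\bar\x \in \ran(\lprox{\phi}{f}{\lambda})$, so that $\bar\x \in \lprox{\phi}{f}{\lambda}(\y)$ for some $\y$; since $\dom(\lprox{\phi}{f}{\lambda}) \subset \intr(\dom\phi)$, such a $\y$ lies in $\intr(\dom\phi)$, so that $\nabla\phi(\y)$ is well defined. As $\lambda>0$, the point $\bar\x$ also minimizes $\lambda$ times the proximal objective, and expanding the Bregman distance gives $\lambda f(\x)+\D[\phi](\x,\y) = (\phi+\lambda f)(\x) - \iprod{\nabla\phi(\y)}{\x} + \text{const}$, where the subtracted term is affine in $\x$. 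Applying Fermat's rule together with the exact rule for adding a smooth function \cite[Exercise 8.8(c)]{RoWe98} yields $0 \in \partial(\phi+\lambda f)(\bar\x) - \nabla\phi(\y)$, i.e.\ $\nabla\phi(\y) \in \partial(\phi+\lambda f)(\bar\x)$, which rearranges to $\bar\x \in (\partial(\phi+\lambda f))^{-1}(\nabla\phi(\y)) \subset \ran((\partial(\phi+\lambda f))^{-1}\circ\nabla\phi)$. This establishes the first inclusion. The middle inclusion is then immediate: precomposing $(\partial(\phi+\lambda f))^{-1}$ with $\nabla\phi$ only restricts its admissible arguments to those of the form $\nabla\phi(\y)$, so the range can only shrink.

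The third inclusion is the crux, and the only place the constraint qualification enters. I would fix $\bar\x \in \ran((\partial(\phi+\lambda f))^{-1})$, which by definition means $\partial(\phi+\lambda f)(\bar\x)\neq\emptyset$, and in particular $\bar\x \in \dom\phi\cap\dom f$. The goal is to invoke the subgradient addition rule \cite[Corollary 10.9]{RoWe98} for $\phi+\lambda f$ at $\bar\x$, whose hypothesis is that $v_1+v_2=0$ with $v_1\in\partial^\infty\phi(\bar\x)$ and $v_2\in\partial^\infty(\lambda f)(\bar\x)$ forces $v_1=v_2=0$. Since $\phi$ is proper lsc convex, $\partial^\infty\phi(\bar\x)=N_{\dom\phi}(\bar\x)$, and $\partial^\infty(\lambda f)(\bar\x)=\partial^\infty f(\bar\x)$ for $\lambda>0$, so the hypothesis reads $N_{\dom\phi}(\bar\x)\cap(-\partial^\infty f(\bar\x))=\{0\}$. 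The addition rule then gives $\partial(\phi+\lambda f)(\bar\x) \subset \partial\phi(\bar\x) + \lambda\,\partial f(\bar\x)$; as the left-hand side is nonempty, the right-hand side must be nonempty, forcing $\partial\phi(\bar\x)\neq\emptyset$. By Lemma~\ref{lem:legendre_props}(i), $\dom\partial\phi = \intr(\dom\phi)$, whence $\bar\x \in \intr(\dom\phi)$.

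The main obstacle is simply to certify that the assumed qualification \eqref{eq:qualification_interior} is exactly the hypothesis of the addition rule: one must recognize $\partial^\infty\phi=N_{\dom\phi}$ for convex $\phi$, and observe that the cones $N_{\dom\phi}(\bar\x)\cap(-\partial^\infty f(\bar\x))$ and $\partial^\infty f(\bar\x)\cap(-N_{\dom\phi}(\bar\x))$ are reflections of one another through the origin, hence one equals $\{0\}$ if and only if the other does. Once this identification is made, the deduction $\partial\phi(\bar\x)\neq\emptyset \Rightarrow \bar\x\in\intr(\dom\phi)$ via Lemma~\ref{lem:legendre_props}(i) closes the argument, and chaining the three inclusions delivers $\ran(\lprox{\phi}{f}{\lambda}) \subset \intr(\dom\phi)$.
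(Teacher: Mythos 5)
Your proposal is correct and follows essentially the same route as the paper: Fermat's rule plus the exact sum rule for adding the affine function $\iprod{\nabla\phi(\y)}{\cdot}$ gives the first inclusion, and the third inclusion is obtained by using the constraint qualification to invoke the subgradient addition rule, so that nonemptiness of $\partial(\phi+\lambda f)(\bar\x)$ forces $\partial\phi(\bar\x)\neq\emptyset$ and hence $\bar\x\in\intr(\dom\phi)$ via essential smoothness. Your explicit identification of \eqref{eq:qualification_interior} with the hypothesis of the addition rule (via $\partial^\infty\phi=N_{\dom\phi}$ and the reflection symmetry of the two cone intersections) is a welcome clarification that the paper leaves implicit.
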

\begin{proof}
In case $\y \notin \intr (\dom \phi)$ we have $\lprox{\phi}{f}{\lambda} = \emptyset$ such that for the first inclusion only vectors $\y$ contained in $\intr (\dom \phi)$ matter: Fix $\y \in \intr (\dom \phi)$. By the definition of the left prox it is clear that $\ran (\lprox{\phi}{f}{\lambda}) \subset \dom f \cap \dom \phi$. For $\x \in \dom f \cap \dom \phi$, using \cite[Corollary 10.9]{RoWe98} and the smoothness of the affine function $\phi(\y) + \scal{\cdot-\y}{\nabla \phi(\y)}$, we observe that 
$$
\partial (f+\D[\phi](\cdot,\y))(\x)=\partial {\left(f+\frac1\lambda \phi\right)}(\x) - \frac1\lambda\nabla \phi(\y).
$$
Therefore, invoking Fermat's rule~\cite[Theorem 10.1]{RoWe98}, the first inclusion follows:
\begin{align*}
\x \in \lprox{\phi}{f}{\lambda}(\y) &\implies 0 \in \partial(\phi + \lambda f)(\x) - \nabla \phi(\y) \\
&\implies \x \in {\left(\partial(\phi + \lambda f)^{-1}\circ \nabla \phi \right)}{(\y)}.
\end{align*}
The second inclusion is clear. For the third inclusion note that 
$$
{\ran} {\left((\partial(\phi + \lambda f))^{-1} \right)} = \dom \partial(\phi + \lambda f).
$$
Let $\x \in \dom \partial(\phi + \lambda f)$. This means in particular $\x \in \dom f \cap \dom \phi$ and there exists $\q \in \bR^m$ such that $\q \in \partial(\phi + \lambda f)(\x)$. In view of condition~\eqref{eq:qualification_interior}, we invoke Lemma~\cite[Corollary 10.9]{RoWe98} and \cite[Proposition 8.12]{RoWe98} to obtain 
$$
\q \in \partial(\phi + \lambda f)(\x)\subset \partial\phi(\x) +\lambda \partial f(\x).
$$
This shows that the subset relation is preserved under the $\dom$-operation: $\dom\partial(\phi + \lambda f)\subset \dom \partial\phi \cap \dom \partial f$.
In addition, since $\phi$ is essentially smooth we know from Lemma~\ref{lem:legendre_props} that $\dom \partial\phi = \intr (\dom \phi)$. This yields 
$$
\dom \partial\phi \cap \dom \partial f \subset \dom \partial\phi = \intr (\dom \phi)
$$
and overall $\ran ((\partial(\phi + \lambda f))^{-1})\subset \intr (\dom \phi)$. 
\qed
\end{proof}
We remark that \eqref{eq:qualification_interior} is the standard CQ that ensures the sum-rule \cite[Corollary 10.9]{RoWe98} to hold with inclusion: $\partial (\phi+f) \subset \partial \phi+\partial f$. The condition is guaranteed to hold everywhere if for instance $f$ is smooth, cf. \cite[Exercise 8.8]{RoWe98} or $\dom \phi$ is open or simply $\dom f \subset \intr(\dom \phi)$. The conclusion also follows when $f \in \Gamma_0(\bR^m)$ and $\intr(\dom f) \cap \intr(\dom \phi) \neq \emptyset$.

We state the notion of the right Bregman--Moreau envelope and associated proximal mapping for some step-size parameter $\lambda>0$ adopting the definition from \cite{bauschke2017regularizing} for the convex setting. We would highlight a close connection between the left and the right Bregman proximal mapping, which we invoke in the course of this work to transfer results from the left to the right Bregman proximal mapping.

\begin{definition}[Right Bregman--Moreau envelope and proximal mapping] \label{def:right_env_prox}
Let $\phi \in \Gamma_0(\bR^m)$ be Legendre and $\map{f}{\bR^m}{\exR}$ be proper. For some $\lambda > 0$ and $\y \in \bR^m$ we define the right Bregman--Moreau envelope (in short: right envelope) of $f$ at $\y$ as
\begin{align} \label{eq:right_env}
\renv{\phi}{f}{\lambda}(\y)&=\inf_{\x \in \bR^m} f(\x) + \frac{1}{\lambda} \D[\phi](\y,\x),
\end{align}
and the associated right Bregman proximal mapping (in short: right prox) of $f$ at $\y$ as
\begin{align} \label{eq:right_prox}
\rprox{\phi}{f}{\lambda}(\y)=\argmin_{\x \in \bR^m} f(\x) + \frac{1}{\lambda} \D[\phi](\y,\x).
\end{align}
\end{definition}

\cite{bauschke2009bregman,bauschke2011chebyshev} studied the single-valuedness of the (nonconvex) right Bregman projection\footnote{We write ``nonconvex Bregman projection'' or ``nonconvex Bregman proximal mapping'' for the sake of convenience, whereby they mean Bregman projection with respect to a (possibly) nonconvex set or Bregman proximal mapping with respect to a (possibly) nonconvex function.}, through the left Bregman projection, expressing the right Bregman projection as a transformed left Bregman projection via the identity $\D[\phi](\y, \x) = \D[\phi^*](\nabla \phi(\x), \nabla \phi(\y))$ for $\x,\y \in \intr (\dom \phi)$, see \cite[Proposition 7.1]{bauschke2009bregman} or \cite[Lemma 2.1]{bauschke2011chebyshev}. We adapt their approach to the left and right nonconvex Bregman proximal mapping and obtain the following relation between the two.

\begin{lemma} \label{lem:right_prox}
Let $\phi \in \Gamma_0(\bR^m)$ be Legendre and let $\map{f}{\bR^m}{\exR}$ be proper such that $\intr (\dom \phi) \cap \dom f \neq \emptyset$ and let $\y \in \intr(\dom \phi)$. Then we have for the right Bregman envelope
\begin{align}
\renv{\phi}{f}{\lambda}(\y) = \lenv{\phi^*}{(f \circ \nabla \phi^*)}{\lambda}(\nabla \phi(\y)),
\end{align}
and the associated right Bregman-prox
\begin{align}
\rprox{\phi}{f}{\lambda}(\y) = {\nabla \phi^*} {\big(\lprox{\phi^*}{(f \circ \nabla \phi^*)}{\lambda} (\nabla \phi(\y)) \big)}.
\end{align}
\end{lemma}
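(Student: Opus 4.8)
The plan is to reduce the right envelope and prox to the corresponding left quantities by the change of variables $\w = \nabla\phi(\x)$, exploiting the conjugacy relation in Lemma~\ref{lem:bregman_props}(ii) together with the bijectivity of $\nabla\phi$ furnished by Lemma~\ref{lem:legendre_props}(iii).

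First I would observe that, since $\D[\phi](\y,\x) = +\infty$ whenever $\x \notin \intr(\dom\phi)$, the infimum defining $\renv{\phi}{f}{\lambda}(\y)$ may be restricted to $\x \in \intr(\dom\phi)$ without changing its value. On this set, Lemma~\ref{lem:legendre_props}(iii) guarantees that $\nabla\phi$ maps $\intr(\dom\phi)$ bijectively onto $\intr(\dom\phi^*)$ with inverse $\nabla\phi^*$, so I would substitute $\w = \nabla\phi(\x)$, equivalently $\x = \nabla\phi^*(\w)$, letting $\w$ range over $\intr(\dom\phi^*)$. Because $\y \in \intr(\dom\phi)$ by hypothesis, both arguments of $\D[\phi](\y,\x)$ lie in $\intr(\dom\phi)$, and Lemma~\ref{lem:bregman_props}(ii) yields $\D[\phi](\y,\x) = \D[\phi^*](\nabla\phi(\x),\nabla\phi(\y)) = \D[\phi^*](\w,\nabla\phi(\y))$. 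Combined with $f(\x) = (f\circ\nabla\phi^*)(\w)$, the objective transforms termwise, giving
\[
\renv{\phi}{f}{\lambda}(\y) = \inf_{\w \in \intr(\dom\phi^*)} (f\circ\nabla\phi^*)(\w) + \frac{1}{\lambda}\D[\phi^*](\w,\nabla\phi(\y)).
\]

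To identify the right-hand side with $\lenv{\phi^*}{(f\circ\nabla\phi^*)}{\lambda}(\nabla\phi(\y))$, I would note that $\nabla\phi^*$ is defined only on $\intr(\dom\phi^*)$, so the composition $f\circ\nabla\phi^*$ is tacitly $+\infty$ outside $\intr(\dom\phi^*)$; extending the infimum from $\intr(\dom\phi^*)$ to all of $\bR^m$ therefore adds only infinite values and leaves it unchanged, establishing the envelope identity. The prox identity then follows from the same computation read at the level of minimizers: the termwise equality of objectives under the bijection $\w = \nabla\phi(\x)$ shows that $\x$ minimizes the right objective iff $\w = \nabla\phi(\x)$ minimizes the left objective. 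Moreover, any minimizer of the right objective automatically lies in $\intr(\dom\phi)$, since the objective is otherwise $+\infty$ while a point of $\intr(\dom\phi)\cap\dom f$ gives a finite value; applying $\nabla\phi^*$ to recover $\x$ then yields $\rprox{\phi}{f}{\lambda}(\y) = \nabla\phi^*\big(\lprox{\phi^*}{(f\circ\nabla\phi^*)}{\lambda}(\nabla\phi(\y))\big)$.

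The only delicate step, and the one I would be most careful with, is the bookkeeping of domains: one must verify that $\nabla\phi(\y) \in \intr(\dom\phi^*)$ so that the left quantities on the right-hand side are evaluated at an admissible point (immediate from Lemma~\ref{lem:legendre_props}(iii) since $\y \in \intr(\dom\phi)$), and that the implicit domain convention for $f\circ\nabla\phi^*$ matches the restriction of the transformed infimum to $\intr(\dom\phi^*)$. Everything else is a direct termwise substitution driven by the conjugacy identity.
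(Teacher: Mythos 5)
Your proposal is correct and follows essentially the same route as the paper's proof: restrict the infimum to $\intr(\dom\phi)$, apply the conjugacy identity $\D[\phi](\y,\x)=\D[\phi^*](\nabla\phi(\x),\nabla\phi(\y))$ from Lemma~\ref{lem:bregman_props}(ii), and substitute $\w=\nabla\phi(\x)$ using the bijectivity of $\nabla\phi$ from Lemma~\ref{lem:legendre_props}(iii), reading the same computation at the level of minimizers for the prox identity. Your domain bookkeeping is, if anything, slightly more explicit than the paper's.
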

\begin{proof}
Let $\y \in \intr(\dom \phi)$. In view of Lemma~\ref{lem:bregman_props}(ii), we may introduce a substitution $\z = \nabla \phi(\x)$, for $\x \in \intr(\dom \phi)$ and rewrite due to the properness of $f$:
\begin{align*}
\renv{\phi}{f}{\lambda}(\y) &= \inf_{\x \in \bR^m} f(\x) + \frac1\lambda \D[\phi](\y, \x) \\
&= \inf_{\x \in \intr(\dom \phi)} f(\x) + \frac1\lambda \D[\phi](\y, \x) \\
&= \inf_{\x \in \intr(\dom \phi)} f(\x) + \frac1\lambda \D[\phi^*](\nabla \phi(\x), \nabla \phi(\y)) \\
&= \inf_{\z \in \intr(\dom \phi^*)} f( \nabla \phi^*(\z)) + \frac1\lambda \D[\phi^*](\z, \nabla \phi(\y)) \\
&= \lenv{\phi^*}{(f \circ \nabla \phi^*)}{\lambda}(\nabla \phi(\y)).
\end{align*}
By the same argument, we also have:
\begin{alignat*}{2}
&\qquad& \x &\in \rprox{\phi}{f}{\lambda}(\y) \\
\iff&& \x &\in \argmin_{\x \in \intr(\dom \phi)} f(\x) + \frac{1}{\lambda} \D[\phi^*](\nabla \phi(\x), \nabla \phi(\y)) \\
\iff&& \nabla \phi(\x) &\in \argmin_{\z \in \intr(\dom \phi^*)} f(\nabla \phi^*(\z)) + \frac{1}{\lambda} \D[\phi^*](z, \nabla \phi(\y)) \\
\iff&& \x &\in {\nabla \phi^*}{\big(\lprox{\phi^*}{(f \circ \nabla \phi^*)}{\lambda}(\nabla \phi(\y)) \big)}.
\end{alignat*}
\qed
\end{proof}

The above relation between left and right envelope reveals that prox-boundedness of $f \circ \nabla \phi^*$ relative to $\phi^*$ is equivalent to $\renv{\phi}{f}{\lambda}(\y) > -\infty$ for some $\y \in \intr(\dom \phi)$ and $\lambda > 0$. This leads us to formulate an analogue definition of prox-boundedness for the right prox.
\begin{definition}[Relative right prox-boundedness]
Let $\phi \in \Gamma_0(\bR^m)$ be Legendre and $\map{f}{\bR^m}{\exR}$ be proper such that $\intr (\dom \phi) \cap \dom f \neq \emptyset$. We say $f$ is right prox-bounded relative to $\phi$ if there exists $\lambda > 0$ such that $\renv{\phi}{f}{\lambda}(\y)> -\infty$ for some $\y\in \intr(\dom \phi)$. The supremum of the set of all such $\lambda$ is the threshold $\lambda_f$ of the right prox-boundedness, i.e.
$$
\lambda_f = \sup\big\{ \lambda > 0: \exists\, \y \in \intr(\dom \phi) : \renv{\phi}{f}{\lambda}(\y)> -\infty \big\}.
$$
\end{definition}

The above expression for the right Bregman proximal mapping shows that the continuity properties of the left prox carry over to right prox under right prox-boundedness and super-coercivity of $\phi^*$, or equivalently, in view of Lemma~\ref{lem:legendre_props}(iv), $\dom \phi = \bR^m$. Full domain of $\phi$ has appeared as an assumption in several earlier related works that are concerned with the single-valuedness of the right Bregman projection \cite{bauschke2009bregman,bauschke2011chebyshev}, while \cite{bauschke2011chebyshev} posed the open question as to whether these assumptions are really necessary in the context of Chebyshev sets, see \cite[Problem 2]{bauschke2011chebyshev}.
\begin{lemma}[Continuity properties of the right prox and envelope] \label{lem:continuity_right}
Let $\phi \in \Gamma_0(\bR^m)$ be Legendre with $\dom \phi = \bR^m$ and let $\map{f}{\bR^m}{\exR}$ be proper, lsc and relatively right prox-bounded with threshold $\lambda_{f}$ and let $\lambda \in \left]0,\lambda_f\right[$. 
Then $\rprox{\phi}{f}{\lambda}$ and $\renv{\phi}{f}{\lambda}$ have the following properties:
\begin{enumerate}
\item[\rm (i)] $\rprox{\phi}{f}{\lambda}(\y) \neq \emptyset$ is compact for all $\y \in \bR^m$ and the envelope $\renv{\phi}{f}{\lambda}$ is proper.
\item[\rm (ii)] The envelope $\renv{\phi}{f}{\lambda}$ is continuous.
\item[\rm (iii)] For any sequence $\y^\nu \to \y^*$ and $\x^\nu \in \rprox{\phi}{f}{\lambda}(\y^\nu)$ we have $\{\x^\nu\}_{\nu \in \bN}$ is bounded and all its cluster points $\x^*$ lie in $\rprox{\phi}{f}{\lambda}(\y^*)$.
\end{enumerate}
\end{lemma}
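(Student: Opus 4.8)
The plan is to reduce the three assertions to the corresponding statements for the left prox and envelope, which are already available in Lemma~\ref{lem:continuity_left}, and then to transport them through the change of variables of Lemma~\ref{lem:right_prox}. Write $g:=f\circ\nabla\phi^*$, extended by $+\infty$ outside $\intr(\dom\phi^*)$ (where $\nabla\phi^*$ is undefined). By Lemma~\ref{lem:right_prox}, for every $\y\in\bR^m=\intr(\dom\phi)$,
\[
\renv{\phi}{f}{\lambda}(\y)=\lenv{\phi^*}{g}{\lambda}(\nabla\phi(\y)),\qquad
\rprox{\phi}{f}{\lambda}(\y)=\nabla\phi^*\bigl(\lprox{\phi^*}{g}{\lambda}(\nabla\phi(\y))\bigr),
\]
so it suffices to apply Lemma~\ref{lem:continuity_left} to the pair $(g,\phi^*)$ and push the conclusions forward along $\nabla\phi$ and $\nabla\phi^*$.

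First I would check the hypotheses of Lemma~\ref{lem:continuity_left} for $(g,\phi^*)$. By Lemma~\ref{lem:legendre_props}(ii) the conjugate $\phi^*$ is again Legendre, and by Lemma~\ref{lem:legendre_props}(iv) the assumption $\dom\phi=\bR^m$ is precisely super-coercivity of $\phi^*$. Since $f$ is proper and $\nabla\phi^*$ maps $\intr(\dom\phi^*)$ bijectively onto $\bR^m$ (Lemma~\ref{lem:legendre_props}(iii)), $g$ is proper with $\emptyset\neq\dom g\subset\intr(\dom\phi^*)$, and the remark following Lemma~\ref{lem:right_prox} identifies right prox-boundedness of $f$ relative to $\phi$ with prox-boundedness of $g$ relative to $\phi^*$, sharing the threshold $\lambda_f$.

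The one point that requires care---and which I expect to be the main obstacle---is the lower semicontinuity of $g$ demanded by Lemma~\ref{lem:continuity_left}. On $\intr(\dom\phi^*)$ the map $\nabla\phi^*$ is continuous, so $g$ is lsc there; but at a boundary point of $\dom\phi^*$ essential smoothness of $\phi^*$ forces $\|\nabla\phi^*(\z^\nu)\|\to\infty$, and $g=f\circ\nabla\phi^*$ need not diverge (e.g.\ when $f$ is bounded), so $g$ can fail to be globally lsc whenever $\phi$ is not itself super-coercive. This is immaterial for the prox and envelope, however: for $\y\in\bR^m$ the Bregman term $\D[\phi^*](\cdot,\nabla\phi(\y))$ is infinite off $\dom\phi^*$ and the $+\infty$ convention makes $g$ infinite on $\bdry(\dom\phi^*)$, so the defining minimization runs effectively over $\intr(\dom\phi^*)$. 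Equivalently, under the homeomorphism $\z=\nabla\phi(\x)$ and the identity $\D[\phi^*](\nabla\phi(\x),\nabla\phi(\y))=\D[\phi](\y,\x)$ from Lemma~\ref{lem:bregman_props}(ii), this minimization is the transport of $\x\mapsto f(\x)+\tfrac1\lambda\D[\phi](\y,\x)$, which is lsc on all of $\bR^m$ because $\D[\phi](\y,\cdot)$ is continuous there (here $\dom\phi=\bR^m$ is used decisively). Hence Lemma~\ref{lem:continuity_left}, applied to the lsc hull of $g$ (which coincides with $g$ on $\intr(\dom\phi^*)$ and leaves the prox and envelope at $\nabla\phi(\y)$ unchanged), yields the required left-side properties, with $\lprox{\phi^*}{g}{\lambda}(\nabla\phi(\y))\subset\intr(\dom\phi^*)$.

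Finally I would transport the three properties through the homeomorphism $\nabla\phi\colon\bR^m\to\intr(\dom\phi^*)$ with continuous inverse $\nabla\phi^*$. For (i), Lemma~\ref{lem:continuity_left}(i) makes $\lprox{\phi^*}{g}{\lambda}(\nabla\phi(\y))$ nonempty and compact inside $\intr(\dom\phi^*)$, so its image under the continuous $\nabla\phi^*$ is nonempty and compact, and properness of $\renv{\phi}{f}{\lambda}=\lenv{\phi^*}{g}{\lambda}\circ\nabla\phi$ is immediate. For (ii), $\renv{\phi}{f}{\lambda}$ is the composition of the continuous $\nabla\phi$ with the envelope $\lenv{\phi^*}{g}{\lambda}$, continuous on $\intr(\dom\phi^*)$ by Lemma~\ref{lem:continuity_left}(ii). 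For (iii), given $\y^\nu\to\y^*$ and $\x^\nu\in\rprox{\phi}{f}{\lambda}(\y^\nu)$, set $\z^\nu:=\nabla\phi(\x^\nu)\in\lprox{\phi^*}{g}{\lambda}(\nabla\phi(\y^\nu))$ and note $\nabla\phi(\y^\nu)\to\nabla\phi(\y^*)$; Lemma~\ref{lem:continuity_left}(iii) makes $\{\z^\nu\}$ bounded with all cluster points in $\lprox{\phi^*}{g}{\lambda}(\nabla\phi(\y^*))\subset\intr(\dom\phi^*)$. A bounded sequence whose cluster points all lie in this open set is eventually contained in a compact subset of it, on which $\nabla\phi^*$ is bounded, so $\{\x^\nu\}=\{\nabla\phi^*(\z^\nu)\}$ is bounded; passing to a convergent subsequence and using continuity of $\nabla\phi^*$ sends each cluster point of $\{\x^\nu\}$ into $\nabla\phi^*\bigl(\lprox{\phi^*}{g}{\lambda}(\nabla\phi(\y^*))\bigr)=\rprox{\phi}{f}{\lambda}(\y^*)$.
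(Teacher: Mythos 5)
Your reduction is exactly the paper's: pass to $g:=f\circ\nabla\phi^*$, invoke Lemma~\ref{lem:continuity_left} for the pair $(g,\phi^*)$, and transport the conclusions back through the homeomorphism $\nabla\phi$. You also correctly isolate the one delicate point, which the paper's own two-line proof silently skips: $g$, extended by $+\infty$ off $\intr(\dom\phi^*)$, need not be lower semicontinuous at $\bdry(\dom\phi^*)$ when $\phi$ is not super-coercive, so Lemma~\ref{lem:continuity_left} does not directly apply to $g$.

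The gap is in your repair of that point. You replace $g$ by its lsc hull and assert that this ``leaves the prox and envelope at $\nabla\phi(\y)$ unchanged.'' The envelope is indeed unchanged (taking the lsc hull never changes an infimum against a continuous perturbation), but the \emph{argmin} is not: the lsc hull can acquire minimizers on $\bdry(\dom\phi^*)$, and such points have no preimage under $\nabla\phi$, so they do not produce elements of $\rprox{\phi}{f}{\lambda}(\y)$. This is not a removable technicality. Take $m=1$, $\phi=\exp$ (so $\dom\phi=\bR$, $\phi^*(\z)=\z\log\z-\z$ on $[0,\infty)$), $f=\arctan$, which is proper, continuous, bounded below, hence relatively right prox-bounded with $\lambda_f=+\infty$. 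For $\y=0$ and $\lambda=2$ the objective is
\[
h(\x)=\arctan(\x)+\tfrac12\bigl(1-e^{\x}(1-\x)\bigr),\qquad h'(\x)=\tfrac{1}{1+\x^2}+\tfrac{\x e^{\x}}{2}>0\ \text{for all }\x,
\]
since $\max_{\x<0}\bigl(-\x(1+\x^2)e^{\x}\bigr)\approx 1.51<2$. Thus $h$ is strictly increasing, $\inf h=-\pi/2+1/2$ is approached only as $\x\to-\infty$ (equivalently as $\z=\nabla\phi(\x)=e^{\x}\to 0\in\bdry(\dom\phi^*)$), and $\rprox{\phi}{f}{2}(0)=\emptyset$, while $\lprox{\phi^*}{\cl g}{2}(1)=\{0\}$. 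So the minimizer of the lsc hull sits exactly at the boundary point your argument discards, and part (i) of the statement fails for this $(\phi,f,\lambda)$. The conclusion holds under an additional hypothesis making the escape impossible — e.g.\ $\phi$ super-coercive (so $\dom\phi^*=\bR^m$ and there is no boundary), or the explicit assumption that $f\circ\nabla\phi^*$ is lsc on $\bR^m$ — and your transport of (i)--(iii) is then correct; but as written both your proof and the paper's share this unclosable step.
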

\begin{proof}
Since $f$ is right prox-bounded relative to $\phi$ with threshold $\lambda_f$, due to the identity $\renv{\phi}{f}{\lambda}(\y) = \lenv{\phi^*}{(f \circ \nabla \phi^*)}{\lambda}(\nabla \phi(\y))$ for any $\y \in \bR^m$ (which holds thanks to Lemma~\ref{lem:right_prox}), we know that $f \circ \nabla \phi^*$ is (left) prox-bounded relative to $\phi^*$ with the same threshold. The result then follows by invoking Lemma~\ref{lem:continuity_left} for $f \circ \nabla \phi^*$, the super-coercivity of $\phi^*$ and the continuity of both $\nabla \phi$ and $\nabla \phi^*$, cf. Lemma~\ref{lem:legendre_props}.
\qed
\end{proof}

\subsection{Single-Valuedness of Bregman Proximal Mappings under Relative Prox-regularity} \label{sec:relative_prox_regularity}
We generalize the definition of proximal subgradients \cite[Definition 8.45]{RoWe98} to a Bregman distance setting: A classical proximal subgradient is a regular subgradient for which the error term $o(\|\x-\bar{\x}\|)$ can be specialized to a negative quadratic: $o(\|\x-\bar{\x}\|) = -(r/2)\|\x-\bar{\x}\|^2$. Analogously, a relatively proximal subgradient is a regular subgradient where the error term $o(\|\x-\bar{\x}\|)$ specializes to a Bregman distance $- r\D[\phi](\x,\bar{\x})$.
\begin{definition}[Relatively proximal subgradients and normals] Let $\phi \in \Gamma_0(\bR^m)$ be Legendre. A vector $\q$ is called a relatively proximal subgradient (relative to $\phi$) of a function $\map{f}{\bR^m}{\exR}$ at $\bar{\x} \in \intr (\dom \phi)$, a point where $f(\bar{\x})$ is finite, if there exist $r>0$ and $\epsilon >0$ such that for all $\|\x-\bar{\x}\|\leq \epsilon$ it holds that $\x \in \intr (\dom \phi)$ and
\begin{align} \label{eq:prox_subgradient}
f(\x)\geq f(\bar{\x})+\langle \q, \x-\bar{\x}\rangle - r\D[\phi](\x,\bar{\x}).
\end{align}
If $f=\delta_C$ specializes to an indicator function of a set $C$ we shall refer to $\q$ as a relatively proximal normal to $C$.
\end{definition}

We shall point out the following relation to classical proximal subgradients and normals \cite[Definition 8.45]{RoWe98}, i.e. when $\phi=(1/2)\|\cdot\|^2$.
\begin{proposition} \label{prop:equivalence_prox_subgrad}
Let $\phi \in \Gamma_0(\bR^m)$ be Legendre and $\mathcal{C}^2$ on $\intr (\dom \phi)$. Let $\map{f}{\bR^m}{\exR}$ be finite at $\bar{\x} \in \intr (\dom \phi)$. Then the following implication holds:
If $\q\in \bR^m$ is a relatively proximal subgradient of $f$ at $\bar{x}$, then $\q$ is a proximal subgradient of $f$ at $\bar{x}$. The converse is true if, furthermore, $\nabla^2\phi(\x)$ is positive definite for any $\x \in\intr (\dom \phi)$, i.e. $\phi$ is very strictly convex.
\end{proposition}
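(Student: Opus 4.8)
The plan is to observe that the relatively proximal and the classical proximal subgradient inequalities are \emph{identical} except for their quadratic-type error terms: the former subtracts $r\D[\phi](\x,\bar{\x})$, the latter subtracts $\frac{r}{2}\|\x-\bar{\x}\|^2$. Hence the whole proposition reduces to sandwiching $\D[\phi](\x,\bar{\x})$ between multiples of $\frac12\|\x-\bar{\x}\|^2$ on a neighborhood of $\bar{\x}$. Since $\bar{\x}\in\intr(\dom\phi)$ and $\intr(\dom\phi)$ is open, I would first fix a closed ball $K:=\{\x : \|\x-\bar{\x}\|\le\epsilon_0\}\subset\intr(\dom\phi)$, which is compact and convex, and intersect the $\epsilon$-neighborhoods from the subgradient definitions with $K$, so that every point and every segment used below stays inside the region where $\phi$ is twice differentiable.

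For the forward implication I would produce the upper bound $\D[\phi](\x,\bar{\x})\le\frac{M}{2}\|\x-\bar{\x}\|^2$ for $\x\in K$. Because $\phi$ is $\mathcal{C}^2$ on $\intr(\dom\phi)$, the map $\nabla^2\phi$ is continuous, hence bounded on the compact set $K$; set $M:=\sup_{\xi\in K}\|\nabla^2\phi(\xi)\|$. Writing the Bregman distance via the second-order Taylor expansion with integral remainder,
$$\D[\phi](\x,\bar{\x})=\int_0^1(1-t)\,\langle\nabla^2\phi(\bar{\x}+t(\x-\bar{\x}))(\x-\bar{\x}),\x-\bar{\x}\rangle\,dt,$$
and using convexity of $K$ so that the segment $[\bar{\x},\x]$ remains in $K$, the integrand is bounded above by $(1-t)M\|\x-\bar{\x}\|^2$, which integrates to the claimed estimate. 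Substituting it into $f(\x)\ge f(\bar{\x})+\langle\q,\x-\bar{\x}\rangle-r\D[\phi](\x,\bar{\x})$ turns that into $f(\x)\ge f(\bar{\x})+\langle\q,\x-\bar{\x}\rangle-\frac{rM}{2}\|\x-\bar{\x}\|^2$, so $\q$ is a classical proximal subgradient with modulus $rM$. I note that this direction uses only boundedness of the Hessian; positive semidefiniteness (from convexity) is not even needed for an upper bound on the quadratic form.

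For the converse, under the extra hypothesis that $\nabla^2\phi$ is positive definite throughout $\intr(\dom\phi)$, I would invoke Lemma~\ref{lem:bregman_props}(iii) on the compact convex set $K$ to obtain $\theta>0$ with $\D[\phi](\x,\bar{\x})\ge\frac{\theta}{2}\|\x-\bar{\x}\|^2$ for $\x\in K$, i.e.\ the matching \emph{lower} bound. Then $\frac{r}{2}\|\x-\bar{\x}\|^2\le\frac{r}{\theta}\D[\phi](\x,\bar{\x})$, and substituting into the classical inequality yields the relative one with modulus $r/\theta$. The only genuine content is thus the two-sided quadratic sandwich for $\D[\phi]$, and the lower bound is precisely where very strict convexity enters—explaining why the converse cannot hold without it. I expect no real obstacle beyond keeping the neighborhoods nested inside $\intr(\dom\phi)$ so the Taylor/Hessian arguments are legitimate; the point worth stating cleanly is the asymmetry, namely that the upper bound follows from mere $\mathcal{C}^2$-boundedness whereas the lower bound requires definiteness of $\nabla^2\phi$.
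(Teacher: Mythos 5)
Your proof is correct and follows essentially the same route as the paper, which disposes of the proposition in one line by citing the two-sided quadratic comparison $\tfrac{\theta}{2}\|\x-\y\|^2 \leq \D[\phi](\x,\y) \leq \tfrac{\Theta}{2}\|\x-\y\|^2$ from Lemma~\ref{lem:bregman_props}(iii) on a compact convex neighborhood of $\bar{\x}$. Your only (welcome) refinement is deriving the upper bound directly from the Taylor remainder under mere $\mathcal{C}^2$-boundedness of $\nabla^2\phi$, which matches the forward implication's weaker hypothesis more precisely than the cited lemma, whose statement formally assumes very strict convexity.
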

\begin{proof}
This is a direct consequence of Lemma~\ref{lem:bregman_props}(iii).
\qed
\end{proof}

\begin{figure}[h]
\centering
\begin{subfigure}[b]{0.3\linewidth}
        \centering
        \includegraphics[width=\textwidth]{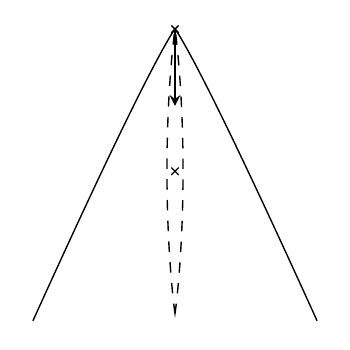}	
\end{subfigure}
\hspace{0.05\linewidth}
\begin{subfigure}[b]{0.3\linewidth}
        \centering
	 \includegraphics[width=\textwidth]{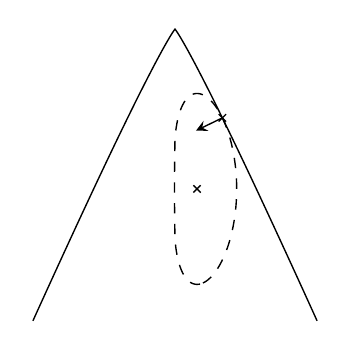} 
\end{subfigure}
\caption{Illustration of Bregman proximal normals and Bregman projections by means of Example~\ref{ex:amenable}. The set $C=\epi h$, given as the epigraph of $h(\x) = 2\x^2 - 3|\x|^{1.1}$, is indicated as the area above the solid line, which corresponds to the graph of $h$ around $0$. The arrows indicate the relatively proximal normals $\q$ of $C$ at $\bar \x$. The dashed line corresponds to the Bregman distance ball around the point $\nabla \phi^*(\nabla \phi(\bar \x) + \lambda \q)$ generated by $\phi(\x_1, \x_2)=\x_1^2 + |\x_1|^{1.1} + \x_2^2$, whose ``radius'' is chosen such that its upper surface touches the epigraph of $h$ only at $\bar \x$, which is possible if $\lambda>0$ is sufficiently small. While in the right figure, $\q$ is both a relatively proximal normal and a classical proximal normal of $C$ at $\bar \x$, in the left figure the situation is different: While it is a relatively proximal normal of $C$ at $\bar \x$, it is not a classical proximal normal, since for any $\lambda >0$ there is no Euclidean ball around $\bar \x + \lambda \q$, below $C$ that touches $C$ only at $0$. In view of the subgradient inequality~\eqref{eq:prox_subgradient}, $\tilde v:=\q + (1/\lambda)\nabla \phi(\bar \x)$ is a classical proximal subgradient of $\tilde{f}:=(1/\lambda)\phi + \delta_C$ at $\bar \x$ though, cf. also Remark~\ref{rem:strong_convexity}.}
\label{fig:bergman_proj}
\end{figure}

\begin{lemma}[Globalization of proximal subgradient inequality] \label{lem:global_ineq}
Let $\phi \in \Gamma_0(\bR^m)$ be Legendre and super-coercive and $\map{f}{\bR^m}{\exR}$ be proper lsc, relatively prox-bounded with threshold $\lambda_f$ and finite at $\bar{\x}\in \intr (\dom \phi)$. Let $\bar{\q}$ be a relatively proximal subgradient of $f$ at $\bar{\x}$. Then, if $r>0$ is sufficiently large the subgradient inequality \eqref{eq:prox_subgradient} holds globally for all $\x \in \bR^m$.
\end{lemma}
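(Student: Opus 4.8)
The plan is to enlarge the radius of validity of the local inequality \eqref{eq:prox_subgradient} from the ball $\|\x-\bar{\x}\|\le\epsilon$ to all of $\bR^m$ by increasing $r$. Let $r_0>0$ and $\epsilon>0$ be the constants furnished by the definition of the relatively proximal subgradient $\bar{\q}$, so that \eqref{eq:prox_subgradient} holds with $r=r_0$ for $\|\x-\bar{\x}\|\le\epsilon$, where moreover $\x\in\intr(\dom\phi)$. Two ranges are immediate. If $\x\notin\dom\phi$, then $\D[\phi](\x,\bar{\x})=+\infty$, the right-hand side of \eqref{eq:prox_subgradient} is $-\infty$, and the inequality holds because $f$ is proper. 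If $\|\x-\bar{\x}\|\le\epsilon$, then for any $r\ge r_0$ we have $f(\x)\ge f(\bar{\x})+\langle\bar{\q},\x-\bar{\x}\rangle-r_0\D[\phi](\x,\bar{\x})\ge f(\bar{\x})+\langle\bar{\q},\x-\bar{\x}\rangle-r\D[\phi](\x,\bar{\x})$ since $\D[\phi]\ge0$. Thus only the set $\set{\x\in\dom\phi\setsep \|\x-\bar{\x}\|>\epsilon}$ requires genuine work.

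Next I would invoke relative prox-boundedness. By Proposition~\ref{prop:relative_prox_bounded_equivalence} there exist $\tilde r>0$ and $\beta\in\bR$ with $f(\x)+\tilde r\phi(\x)\ge\beta$ for all $\x$. For $\x\in\dom\phi$ I rewrite $\phi(\x)=\D[\phi](\x,\bar{\x})+\phi(\bar{\x})+\langle\nabla\phi(\bar{\x}),\x-\bar{\x}\rangle$, substitute into this bound, and reduce the target $f(\x)+r\D[\phi](\x,\bar{\x})\ge f(\bar{\x})+\langle\bar{\q},\x-\bar{\x}\rangle$ to the affine-versus-Bregman estimate
\[
(r-\tilde r)\,\D[\phi](\x,\bar{\x})\ \ge\ c+\langle w,\x-\bar{\x}\rangle,\qquad c:=f(\bar{\x})-\beta+\tilde r\phi(\bar{\x}),\quad w:=\bar{\q}+\tilde r\nabla\phi(\bar{\x}),
\]
to be established for all $\x\in\dom\phi$ with $\|\x-\bar{\x}\|>\epsilon$. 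Evaluating the prox-boundedness bound at $\bar{\x}$ shows $c\ge0$, though this is not essential.

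The crux is to defeat the linear right-hand side with the Bregman term, and here super-coercivity of $\phi$ is the key: it is inherited by $\D[\phi](\cdot,\bar{\x})$, since $\phi(\x)/\|\x\|\to\infty$ while the linearization of $\phi$ at $\bar{\x}$ grows only linearly, so $\D[\phi](\x,\bar{\x})/\|\x-\bar{\x}\|\to\infty$ as $\|\x\|\to\infty$. Hence there is $M\ge\epsilon$ with $\D[\phi](\x,\bar{\x})\ge\|w\|\,\|\x-\bar{\x}\|+c$ once $\|\x-\bar{\x}\|\ge M$, which settles the far field for any $r\ge\tilde r+1$. On the remaining bounded part $\epsilon\le\|\x-\bar{\x}\|\le M$ I would obtain a uniform positive lower bound on the Bregman term from radial monotonicity: for a unit vector $d$ the map $t\mapsto\D[\phi](\bar{\x}+td,\bar{\x})$ is convex, vanishes at $t=0$, and is minimized there because $\bar{\x}$ globally minimizes $\D[\phi](\cdot,\bar{\x})$ (Lemma~\ref{lem:bregman_props}(i)); it is therefore nondecreasing, whence $\D[\phi](\x,\bar{\x})\ge\delta:=\min_{\|\z-\bar{\x}\|=\epsilon}\D[\phi](\z,\bar{\x})>0$ for every $\x$ with $\|\x-\bar{\x}\|\ge\epsilon$, the minimum being attained and positive since the $\epsilon$-sphere is a compact subset of $\intr(\dom\phi)$ on which $\D[\phi](\cdot,\bar{\x})$ is continuous and strictly positive. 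Choosing $r:=\max\set{r_0,\ \tilde r+1,\ \tilde r+(c+\|w\|M)/\delta}$ then makes $(r-\tilde r)\delta\ge c+\|w\|M$, handling the bounded part and completing the argument.

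The main obstacle I anticipate is precisely the bounded annulus $\epsilon\le\|\x-\bar{\x}\|\le M$: because $\dom\phi$ need not be closed, this set is not compact, so one cannot directly appeal to continuity and compactness to keep $\D[\phi](\cdot,\bar{\x})$ bounded away from zero. The radial-monotonicity observation circumvents this by transferring the lower bound to the compact $\epsilon$-sphere, which does lie in $\intr(\dom\phi)$; verifying that monotonicity (convexity along rays together with the minimization property of $\bar{\x}$) is the one genuinely non-routine step, the remainder being the standard super-coercivity domination that replaces the Euclidean quadratic argument of \cite[Exercise~8.46]{RoWe98}.
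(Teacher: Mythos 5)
Your proof is correct and follows essentially the same strategy as the paper's: keep the local inequality on the $\epsilon$-ball, use relative prox-boundedness to get a global lower bound of the form ``affine minus a multiple of $\D[\phi](\cdot,\bar{\x})$'' (you route this through Proposition~\ref{prop:relative_prox_bounded_equivalence}(ii), the paper through the finiteness of $\lenv{\phi}{f}{\lambda}(\bar{\x})$ --- these are equivalent and yield the same estimate), kill the far field with super-coercivity, and bound $\D[\phi](\cdot,\bar{\x})$ away from zero on the remaining annulus. The one place you genuinely diverge is the annulus: the paper simply asserts that strict convexity makes $\D[\phi](\x,\bar{\x})$ bounded away from $0$ there, while you supply an explicit argument via convexity and monotonicity along rays, reducing to the compact $\epsilon$-sphere inside $\intr(\dom\phi)$. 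That argument is valid; note, though, that your stated worry (that $\dom\phi$ may fail to be closed, defeating a compactness argument) is not really an obstruction: the closed annulus $\set{\x : \epsilon\le\|\x-\bar{\x}\|\le M}$ is compact in $\bR^m$, and $\D[\phi](\cdot,\bar{\x})=\phi-\text{affine}$ is lsc and strictly positive (possibly $+\infty$) on it by Lemma~\ref{lem:bregman_props}(i), so its infimum is attained and positive without any radial monotonicity. Either justification closes the step the paper leaves implicit.
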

\begin{proof}
Since $\bar{\q}$ is a relatively proximal subgradient of $f$ at $\bar{\x}$ we know that there exists $r' > 0$ and $\epsilon > 0$ such that for any $r\geq r'$ we have
\begin{equation} \label{eq:lem:global_ineq:A}
f(\x)\geq f(\bar{\x})+\langle \bar{\q}, \x-\bar{\x}\rangle - r \D[\phi](\x,\bar{\x}),
\end{equation}
whenever $\|\x-\bar{\x}\| < \epsilon$ and $\epsilon$ is sufficiently small such that $\x \in \intr (\dom \phi)$.
We prove the assertion by showing that the inequality also holds for any $\x\in\bR^m$ with $\|\x-\bar{\x}\|\geq \epsilon$, when $r$ is chosen to be sufficiently large:
Let $\lambda \in \left]0,\lambda_f\right[$. Since $\map{f}{\bR^m}{\exR}$ is prox-bounded and proper, lsc and $\bar{\x}\in \intr(\dom \phi)$ we know from Lemma~\ref{lem:continuity_left} that $+\infty > \lenv{\phi}{f}{\lambda}(\bar{\x})>-\infty$ since $f$ is prox-bounded and $f(\bar{\x})$ is finite.
Then we have
\begin{equation} \label{eq:lem:global_ineq:B}
f(\x) \geq \lenv{\phi}{f}{\lambda}(\bar{\x}) - \frac{1}{\lambda}\D[\phi](\x,\bar{\x}),
\end{equation}
for all $\x \in \bR^m$.
Combining \eqref{eq:lem:global_ineq:A} and \eqref{eq:lem:global_ineq:B} shows that \eqref{eq:prox_subgradient} holds with modulus $r\geq\max\set{r',1/\lambda}$, when 
$$
  \lenv{\phi}{f}{\lambda}(\bar{\x}) - \frac{1}{\lambda}\D[\phi](\x,\bar{\x}) \geq f(\bar{\x})+\langle \bar{\q}, \x-\bar{\x}\rangle - r\D[\phi](\x,\bar{\x}) 
$$
is satisfied, which is implied (using the Cauchy--Schwarz inequality) by
\begin{align} \label{eq:lem_global_bound}
\frac{f(\bar{\x})-\lenv{\phi}{f}{\lambda}(\bar{\x})}{\|\x-\bar{\x}\|} +\|\bar{\q}\| \leq \left( r - \frac{1}{\lambda}\right) \frac{\D[\phi](\x,\bar{\x})}{\|\x-\bar{\x}\|}.
\end{align}
Using super-coercivity of $\phi$, the inequality happens to be true for \mynewline $r\geq\max\set{r',1/\lambda}$ and all $\x$ with $\norm{\x-\bar{\x}}\geq \mu$ for some $\mu> \eps$. It remains to verify \eqref{eq:lem_global_bound} for $\mu > \|\x - \bar{\x}\| \geq \epsilon>0$ for some $r$. However, since for such $\x$, using strict convexity of $\phi$, obviously, $\D[\phi](\x,\bar{\x})$ is bounded away from $0$, we can find some $r$ sufficiently large such that \eqref{eq:lem_global_bound} also holds for $\x$ with $\mu > \|\x - \bar{\x}\| \geq \epsilon$.
\qed
\end{proof}

The following lemma shows that analogous to the classical prox, the left Bregman prox and envelope of a tilted function $f - \langle \cdot, \q \rangle$ can be written as the Bregman prox and envelope of $f$ at a transformed point, respectively.
\begin{lemma}[Effects of tilt transformation] \label{lem:effects_tilt}
Let $\phi \in \Gamma_0(\bR^m)$ be Legendre and $f:\bR^m\to\exR$ be proper lsc. Let $\y \in \intr (\dom \phi)$ and $ \q \in \bR^m$. Denote by $\z := \nabla \phi^*(\nabla \phi(\y) + \lambda\q)$ and by $f_0:=f - \langle \cdot, \q \rangle$.
Then we have the following identities for the prox
$$
\lprox{\phi}{f_0}{\lambda}(\y) = \lprox{\phi}{f}{\lambda}(\z),
$$
and the envelope function
\begin{align*}
\lenv{\phi}{f_0}{\lambda}(\y) = \lenv{\phi}{f}{\lambda}(\z) +\frac{1}{\lambda}\D[\phi](\z, \y) -\langle  \q, \z \rangle.
\end{align*}
\end{lemma}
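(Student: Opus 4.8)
The plan is to reduce both identities to a single pointwise algebraic rewriting of the tilted objective and then pass to the infimum (for the envelope) and the argmin (for the prox). The crucial observation, which does all the work, is that the point $\z$ is chosen precisely so that $\nabla\phi(\z) = \nabla\phi(\y) + \lambda\q$. Indeed, since $\y \in \intr(\dom\phi)$ we have $\nabla\phi(\y) \in \intr(\dom\phi^*)$, and by Lemma~\ref{lem:legendre_props}(iii) the maps $\nabla\phi$ and $\nabla\phi^*$ are mutually inverse bijections between $\intr(\dom\phi)$ and $\intr(\dom\phi^*)$. Applying $\nabla\phi$ to $\z = \nabla\phi^*(\nabla\phi(\y)+\lambda\q)$ therefore gives $\nabla\phi(\z) = \nabla\phi(\y)+\lambda\q$, and in particular $\z \in \intr(\dom\phi)$, so that the envelope and prox of $f$ at $\z$ are well-defined. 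This turns the linear tilt $-\langle\cdot,\q\rangle$ into a mere displacement of the Bregman anchor point.

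Next I would expand both Bregman distances via the definition, writing $\D[\phi](\x,\y) = \phi(\x) - \phi(\y) - \langle\nabla\phi(\y), \x-\y\rangle$ and likewise for $\D[\phi](\x,\z)$, and subtract. The $\phi(\x)$ terms cancel, and the only $\x$-dependent contribution of $\tfrac1\lambda(\D[\phi](\x,\y) - \D[\phi](\x,\z))$ is $\tfrac1\lambda\langle\nabla\phi(\z)-\nabla\phi(\y),\x\rangle$, which by the identity above equals exactly $\langle\q,\x\rangle$. Collecting all terms independent of $\x$ into a constant $C$, this yields the pointwise identity
\[
f_0(\x) + \tfrac{1}{\lambda}\D[\phi](\x,\y) = f(\x) + \tfrac{1}{\lambda}\D[\phi](\x,\z) + C,
\]
valid for every $\x \in \bR^m$, with $C$ independent of $\x$.

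With this identity in hand both claims are immediate: taking the $\argmin$ over $\x$ on both sides and noting that an additive constant does not change the set of minimizers gives $\lprox{\phi}{f_0}{\lambda}(\y) = \lprox{\phi}{f}{\lambda}(\z)$, while taking the $\inf$ over $\x$ gives $\lenv{\phi}{f_0}{\lambda}(\y) = \lenv{\phi}{f}{\lambda}(\z) + C$. It then remains only to identify the constant, namely $C = \tfrac{1}{\lambda}[\phi(\z)-\phi(\y)+\langle\nabla\phi(\y),\y\rangle-\langle\nabla\phi(\z),\z\rangle]$, with the claimed value $\tfrac{1}{\lambda}\D[\phi](\z,\y) - \langle\q,\z\rangle$.

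The only place requiring care is this last bookkeeping step: expanding $\tfrac{1}{\lambda}\D[\phi](\z,\y) - \langle\q,\z\rangle$ and substituting $\q = \tfrac1\lambda(\nabla\phi(\z)-\nabla\phi(\y))$, one sees that the cross terms $\pm\tfrac1\lambda\langle\nabla\phi(\y),\z\rangle$ cancel and the remaining terms coincide with $C$. There is no analytic obstacle here at all; the entire substance of the lemma is the duality relation $\nabla\phi(\z) = \nabla\phi(\y)+\lambda\q$, and the rest is mechanical algebra that I would carry out explicitly only to confirm the stated form of the constant.
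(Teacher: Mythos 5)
Your proposal is correct and follows essentially the same route as the paper: both proofs rest on the identity $\nabla\phi(\z)=\nabla\phi(\y)+\lambda\q$, expand the two Bregman distances to absorb the tilt $-\langle\cdot,\q\rangle$ into a shift of the anchor point, and verify that the leftover $\x$-independent constant equals $\tfrac1\lambda\D[\phi](\z,\y)-\langle\q,\z\rangle$. Your bookkeeping of the constant checks out, so nothing is missing.
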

\begin{proof}
For $\z = \nabla \phi^*(\nabla \phi(\y) + \lambda\q)$ the identities follow from the following calculation:
\begin{align*}
\D[\phi](\x, \z) &=  \phi(\x) - \phi(\z) - \langle \nabla \phi(\y) + \lambda\q, \x - \z \rangle \\
&= -\lambda\langle \q, \x \rangle + \D[\phi](\x,\y) +  \phi(\y) - \langle \nabla \phi(\y), \y \rangle +\langle \nabla \phi(\y) + \lambda\q, \z \rangle- \phi(\z).
\end{align*}
Scaling the equality with $1/\lambda$ and adding $f(\x)$ and reordering yields
\begin{align*}
f_0(\x) + \frac{1}{\lambda}\D[\phi](\x,\y) &=  f(\x) + \frac{1}{\lambda} \big(\D[\phi](\x, \z) - \phi(\y) \\
&\qquad+ \langle \nabla \phi(\y), \y \rangle - \langle \nabla \phi(\y) + \lambda\q, \z\rangle +  \phi(\z) \big)\\
&= f(\x) + \frac{1}{\lambda} \D[\phi](\x, \z) + \frac{1}{\lambda}\D[\phi](\z, \y) -\langle  \q, \z \rangle.
\end{align*}
 \qed
\end{proof}

Based on the globalized subgradient inequality from Lemma~\ref{lem:global_ineq} we shall characterize relatively proximal subgradients via the Bregman proximal map. This property is used frequently in the course of this section to assert the single-valuedness of the Bregman proximal mapping.
\begin{proposition} \label{prop:subgradient}
Let $\phi \in \Gamma_0(\bR^m)$ be Legendre and $\map{f}{\bR^m}{\exR}$ be proper lsc and finite at $\bar{\x}\in \intr (\dom \phi)$. Then the following conditions are equivalent for some $\lambda > 0$:
\begin{enumerate}
\item[\rm (i)] The following inclusion holds:
\begin{align} \label{eq:cor_subgradient_incl}
\bar \x \in \lprox{\phi}{f}{\lambda}(\nabla \phi^*(\nabla \phi(\bar x) + \lambda \q)).
\end{align}
\item[\rm (ii)] The subgradient inequality~\eqref{eq:prox_subgradient} holds globally for all $\x \in \bR^m$ and $r:=1/\lambda$,
\end{enumerate}
where strict inequality holds for all $\x \neq \bar{\x}$ by decreasing $\lambda$. Equivalently this means the inclusion~\eqref{eq:cor_subgradient_incl} holds with equality.
If, furthermore, $\phi$ is super-coercive and $f$ relatively prox-bounded with threshold $\lambda_f$ then the above conditions hold for some $\lambda< \lambda_f$ sufficiently small, if and only if $\q \in \bR^m$ is a relatively proximal subgradient of $f$ at $\bar{x}$.
\end{proposition}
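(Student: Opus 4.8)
The plan is to reduce condition~(i) to a statement about the tilted function $f_0 := f - \langle\cdot, \q\rangle$ and then read off the global subgradient inequality. Writing $\z := \nabla\phi^*(\nabla\phi(\bar\x) + \lambda\q)$, so that $\nabla\phi(\z) = \nabla\phi(\bar\x) + \lambda\q$, Lemma~\ref{lem:effects_tilt} applied at the base point $\y = \bar\x$ gives $\lprox{\phi}{f_0}{\lambda}(\bar\x) = \lprox{\phi}{f}{\lambda}(\z)$, so that~(i) is equivalent to $\bar\x \in \lprox{\phi}{f_0}{\lambda}(\bar\x)$, i.e. to $\bar\x$ being a minimizer of $\x \mapsto f_0(\x) + \frac1\lambda\D[\phi](\x, \bar\x)$. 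Equivalently, and more directly, one expands the three-point difference $\D[\phi](\x, \z) - \D[\phi](\bar\x, \z) = \D[\phi](\x, \bar\x) - \lambda\langle\q, \x - \bar\x\rangle$ using $\nabla\phi(\z) = \nabla\phi(\bar\x) + \lambda\q$ together with $\D[\phi](\bar\x, \bar\x) = 0$. Either way, the minimality condition $f(\bar\x) + \frac1\lambda\D[\phi](\bar\x,\z) \le f(\x) + \frac1\lambda\D[\phi](\x,\z)$ for all $\x$ rearranges term by term into $f(\x) \ge f(\bar\x) + \langle\q, \x - \bar\x\rangle - \frac1\lambda\D[\phi](\x,\bar\x)$, which is precisely~(ii) with $r = 1/\lambda$. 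This establishes (i)$\iff$(ii); note that the tilt point $\z$ is well-defined precisely when $\nabla\phi(\bar\x) + \lambda\q \in \intr(\dom\phi^*)$, which holds for $\lambda$ small since $\nabla\phi(\bar\x) \in \intr(\dom\phi^*)$ by Lemma~\ref{lem:legendre_props}(iii).

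For the strictness and equality refinement, I would observe that $\bar\x$ is the \emph{unique} minimizer exactly when the inequality in~(ii) is strict for every $\x \neq \bar\x$, which is the inclusion~\eqref{eq:cor_subgradient_incl} holding with equality. To see that decreasing $\lambda$ forces strictness, suppose~(ii) holds at $\lambda$ with modulus $r = 1/\lambda$ and pick $\lambda' < \lambda$. For $\x \neq \bar\x$ with $\x \in \dom\phi$ one has $\D[\phi](\x,\bar\x) > 0$ by Lemma~\ref{lem:bregman_props}(i), hence $-\frac1{\lambda'}\D[\phi](\x,\bar\x) < -\frac1\lambda\D[\phi](\x,\bar\x)$, so the right-hand side of~(ii) strictly decreases and strict inequality results; for $\x \notin \dom\phi$ the right-hand side is $-\infty < f(\x)$ and strictness is automatic. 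This is the only place where positivity of the Bregman distance, i.e. essential strict convexity of $\phi$, enters.

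Finally, to connect with relatively proximal subgradients under the extra hypotheses (super-coercivity of $\phi$ and relative prox-boundedness with threshold $\lambda_f > 0$), I would argue both directions. If~(ii) holds globally for some $\lambda < \lambda_f$, then it holds in particular for all $\|\x - \bar\x\| \le \epsilon$, and choosing $\epsilon$ small enough that this ball lies in the open set $\intr(\dom\phi)$ shows $\q$ is a relatively proximal subgradient with $r = 1/\lambda$. Conversely, if $\q$ is a relatively proximal subgradient, then the local inequality~\eqref{eq:prox_subgradient} is exactly the hypothesis of Lemma~\ref{lem:global_ineq}, which globalizes it for all sufficiently large $r$; taking $r$ larger than both $1/\lambda_f$ and the threshold modulus and setting $\lambda = 1/r < \lambda_f$ yields~(ii), hence~(i). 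I expect no serious obstacle here: the genuine content is the bookkeeping of the three-point Bregman identity in the first paragraph, while the globalization step is entirely delegated to Lemma~\ref{lem:global_ineq} and the strictness argument is a one-line use of $\D[\phi](\x,\bar\x) > 0$.
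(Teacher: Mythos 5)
Your proposal is correct and follows essentially the same route as the paper: reduce (i) to $\bar\x \in \lprox{\phi}{f_0}{\lambda}(\bar\x)$ for the tilted function $f_0 = f - \langle\q,\cdot\rangle$ via Lemma~\ref{lem:effects_tilt}, rearrange to obtain (ii) with $r = 1/\lambda$, and delegate the link to relatively proximal subgradients to Lemma~\ref{lem:global_ineq}. The only differences are cosmetic: you spell out the strictness-under-decreasing-$\lambda$ argument (which the paper dismisses as clear) and flag the well-definedness of the tilt point $\nabla\phi^*(\nabla\phi(\bar\x)+\lambda\q)$ for small $\lambda$, both of which are harmless additions.
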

\begin{proof}
Let $\lambda > 0$ and let the subgradient inequality~\eqref{eq:prox_subgradient} hold globally for all $\x \in \bR^m$ and $r:=1/\lambda$. This means:
\begin{align} 
f(\x)-\langle \q, \x\rangle \geq f(\bar{\x})-\langle \q, \bar{\x}\rangle - \frac{1}{\lambda}\D[\phi](\x,\bar{\x}).
\end{align}
Define $f_0:= f - \langle\q, \cdot \rangle$. Then, by reordering the terms the above is equivalent to:
\begin{align} \label{eq:proximal_subgradient:A}
f_0(\x) + \frac{1}{\lambda}\D[\phi](\x,\bar{\x})\geq f_0(\bar{\x}) + \frac{1}{\lambda}\D[\phi](\bar \x,\bar{\x}),
\end{align}
which holds if and only if $\bar{\x} \in \lprox{\phi}{f_0}{\lambda}(\bar \x)$, which, in view of Lemma~\ref{lem:effects_tilt}, is equivalent to $\bar \x \in \lprox{\phi}{f}{\lambda}( \nabla \phi^*(\nabla \phi(\bar \x) + \lambda\q))$.
Then, clearly, strict inequality holds for all $\x \neq \bar{\x}$ by decreasing $\lambda$, which equivalently means the inclusion~\eqref{eq:cor_subgradient_incl} holds with equality.

Let $\q$ be a relatively proximal subgradient of $f$ at $\bar x$ with constants $\epsilon>0$ and $r>0$. Then we may invoke Lemma~\ref{lem:global_ineq} to make the subgradient inequality in \eqref{eq:prox_subgradient} hold globally, for all $\x\in \bR^m$, $r:=1/\lambda$ and $\lambda>0$ sufficiently small. Conversely, when the subgradient inequality~\eqref{eq:prox_subgradient} holds globally for $r:=1/\lambda$, this means in particular that $\q$ is a relatively proximal subgradient.
\qed
\end{proof}

An important class of prox-bounded functions $f$ are indicator functions $f=\delta_C$ \xspace of a possibly nonconvex closed set $C$. Indeed, the threshold of prox-boundedness for such $f$ is $\lambda_f = \infty$. Invoking the above lemma we obtain that $\q$ is a relatively proximal normal to $C$ at $\bar{x}$ if and only if we can perturb $\bar x$ along $\q$ in the Bregman sense as $\y:= \nabla \phi^*(\nabla \phi(\bar x) + \lambda \q)$ so that by Bregman-projecting the perturbed point $\y$ back on $C$ (i.e. computing the left prox of $f$ at $\y$), we recover $\bar \x$. Indeed for $\phi = (1/2)\|\cdot\|^2$ we obtain the classical definition of proximal normals:
$$
N_C^P(\bar x) := \left\{ r(\y - \bar x) : \bar \x \in \mathrm{proj}_C(\y), r \geq 0, \y \in \bR^m \right\},
$$
where $\mathrm{proj}_C$ denotes the classical Euclidean projection onto the set $C$. This is illustrated in Figure~\ref{fig:bergman_proj}.

We now define relative prox-regularity, generalizing \cite[Definition 13.27]{RoWe98} to a Bregman distance setting: We fix a reference point $(\bar{\x}, \bar{\q})$, where $f(\bar{\x})$ is finite and $\bar{\q} \in \partial f(\bar{\x})$ is a relatively proximal subgradient, and require the subgradient inequality \eqref{eq:prox_subgradient} to hold uniformly on a $f$-attentive neighborhood of $(\bar{\x}, \bar{\q})$:

\begin{definition}[Relative prox-regularity]
Let $\phi \in \Gamma_0(\bR^m)$ be Legendre. We say a function $\map{f}{\bR^m}{\exR}$ is relatively prox-regular at $\bar{\x}\in \intr (\dom \phi)$ for $\bar{\q}\in \bR^m$ if $f$ is finite and locally lsc at $\bar \x$ with $\bar{\q}\in\partial f(\bar{\x})$ and there exist $\epsilon>0$ and $r \geq 0$ such that for all $\|\x'-\bar{\x}\|<\epsilon$, $\|\x-\bar{\x}\|<\epsilon$, with $\epsilon$ sufficiently small such that $\x,\x' \in \intr(\dom \phi)$, it holds that:
\begin{align}  \label{eq:prox_regularity}
f(\x') \geq f(\x)+\iprod{\q}{\x'-\x}-r \D[\phi](\x',\x), 
\end{align}
whenever $f(\x)-f(\bar{\x})<\epsilon$, $\q\in\partial f(\x)$, $\|\q-\bar{\q}\|<\epsilon$. When this property holds for all $\bar{\q} \in \partial f(\bar{\x})$, $f$ is said to be relatively prox-regular at $\bar{\x}$.
\end{definition}


For examples of relatively prox-regular functions we refer to Section~\ref{sec:examples_prox_regular} below. We first clarify a relation between the new relative prox-regularity property and classical prox-regularity.
\begin{proposition} \label{prop:prox_regular_very_strictly_convex}
Let $\phi \in \Gamma_0(\bR^m)$ be Legendre and $\mathcal{C}^2$ on $\intr (\dom \phi)$. Let $\map{f}{\bR^m}{\exR}$ be extended real-valued and $\bar{\x} \in \intr (\dom \phi)$. Then the following implication holds:
If $f$ is relatively prox-regular at $\bar{\x}$ for $\bar{\q}$, then $f$ is also prox-regular at $\bar{\x}$ for $\bar{\q}$. The converse is true if, furthermore, $\nabla^2\phi(\x)$ is positive definite for any $\x \in\intr (\dom \phi)$, i.e. $\phi$ is very strictly convex.
\end{proposition}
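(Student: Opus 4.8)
The plan is to reduce both implications to a quadratic two-sided comparison between the Bregman distance $\D[\phi]$ and the squared Euclidean norm on a fixed compact convex neighborhood of $\bar{\x}$, and then simply to exchange one error term for the other in the respective subgradient inequalities.

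First I would fix a compact convex set $K \subset \intr(\dom \phi)$ with $\bar{\x}\in\intr K$ and establish the comparison on $K$. Since $\phi$ is $\mathcal{C}^2$ on $\intr(\dom \phi)$, its Hessian $\nabla^2\phi$ is continuous and hence bounded in operator norm on $K$, say by $\Theta>0$; a second-order Taylor expansion of $\phi$ then gives the upper estimate $\D[\phi](\x',\x)\leq\frac{\Theta}{2}\norm{\x'-\x}^2$ for all $\x,\x'\in K$. I emphasize that this estimate requires only $\mathcal{C}^2$-smoothness, so it cannot be quoted directly from Lemma~\ref{lem:bregman_props}(iii) but must be obtained by this Taylor argument. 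If in addition $\phi$ is very strictly convex, the smallest eigenvalue of $\nabla^2\phi$ is bounded below on $K$ by some $\theta>0$, which yields the matching lower estimate $\frac{\theta}{2}\norm{\x'-\x}^2\leq\D[\phi](\x',\x)$; this is precisely the content of Lemma~\ref{lem:bregman_props}(iii).

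For the forward implication, suppose $f$ is relatively prox-regular at $\bar{\x}$ for $\bar{\q}$ with constants $\epsilon>0$ and $r\geq 0$. After shrinking $\epsilon$ so that $\{\x : \norm{\x-\bar{\x}}<\epsilon\}\subset K$, the upper estimate gives $-r\D[\phi](\x',\x)\geq -\frac{r\Theta}{2}\norm{\x'-\x}^2$, so inequality~\eqref{eq:prox_regularity} implies $f(\x')\geq f(\x)+\iprod{\q}{\x'-\x}-\frac{r\Theta}{2}\norm{\x'-\x}^2$ under the identical $f$-attentive conditions. Since the local lower semicontinuity of $f$, the requirement $\bar{\q}\in\partial f(\bar{\x})$, and the $f$-attentive neighborhood constraints depend only on $f$ and transfer verbatim, this is exactly classical prox-regularity at $\bar{\x}$ for $\bar{\q}$ with modulus $r\Theta$.

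For the converse, assume $\phi$ is very strictly convex and $f$ is classically prox-regular at $\bar{\x}$ for $\bar{\q}$ with constants $\epsilon,r$; again shrink $\epsilon$ so the $\epsilon$-ball lies in $K$, whence $\x,\x'\in\intr(\dom \phi)$ is automatic. The lower estimate gives $-\frac{r}{2}\norm{\x'-\x}^2\geq -\frac{r}{\theta}\D[\phi](\x',\x)$, so the classical inequality yields~\eqref{eq:prox_regularity} with modulus $r/\theta$. I expect the only point requiring care to be the bookkeeping of neighborhoods, but this is benign: the $f$-attentive data $(f(\x)-f(\bar{\x})<\epsilon,\ \q\in\partial f(\x),\ \norm{\q-\bar{\q}}<\epsilon)$ is common to both definitions, and only the quadratic and Bregman error terms are interchanged. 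The genuinely structural observation — and the reason the converse needs the extra positive-definiteness hypothesis while the forward direction does not — is that passing from the quadratic error to the Bregman error uses the lower bound, which fails without very strict convexity, whereas the forward direction uses only the upper bound, valid under mere $\mathcal{C}^2$-smoothness.
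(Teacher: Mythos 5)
Your proof is correct and follows the same route the paper intends: the paper's entire proof is the one-line remark that the statement is ``a direct consequence of Lemma~\ref{lem:bregman_props}(iii)'', i.e.\ of the two-sided quadratic comparison of $\D[\phi](\cdot,\cdot)$ with $\|\cdot\|^2$ on compact convex subsets of $\intr(\dom\phi)$. Your write-up is in fact more careful than the paper's, since you correctly observe that the forward implication needs only the upper estimate, which holds under mere $\mathcal{C}^2$-smoothness via a Taylor argument and is not literally covered by Lemma~\ref{lem:bregman_props}(iii) as stated, whereas only the converse requires the very-strict-convexity lower bound.
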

\begin{proof}
This is a direct consequence of Lemma~\ref{lem:bregman_props}(iii).
\qed
\end{proof}

The relative prox-regularity property of a tilted function is preserved as the following lemma shows:
\begin{lemma}[Invariance under tilt transformation] \label{lem:invariance_tilt}
Let $\phi \in \Gamma_0(\bR^m)$ be Legendre, $f:\bR^m\to\exR$ be extended real-valued and $\bar{\x} \in \intr (\dom \phi)$. Then $f$ is relatively prox-regular at $\bar{\x}$ for $\bar{\q}\in\partial f(\bar{\x})$ if and only if $f_0:= f - \langle\q, \cdot \rangle$ is relatively prox-regular at $\bar{\x}$ for $0\in \partial f_0(\bar{\x})$.
\end{lemma}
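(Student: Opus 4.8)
The plan is to verify both implications by direct substitution, exploiting that subtracting the linear function $\langle\bar{\q},\cdot\rangle$ interacts trivially with every ingredient of the definition of relative prox-regularity. First I would record the elementary transformation rules. Since $\langle\bar{\q},\cdot\rangle$ is smooth, the exact sum rule \cite[Exercise 8.8]{RoWe98} gives $\partial f_0(\x) = \partial f(\x) - \bar{\q}$ for every $\x$; in particular $\bar{\q} \in \partial f(\bar{\x})$ if and only if $0 \in \partial f_0(\bar{\x})$. Moreover $f_0(\bar{\x}) = f(\bar{\x}) - \langle\bar{\q}, \bar{\x}\rangle$ is finite precisely when $f(\bar{\x})$ is, and adding a continuous function preserves local lower semicontinuity, so the standing hypotheses ``finite and locally lsc at $\bar{\x}$ with the prescribed reference subgradient'' transfer between $f$ and $f_0$.

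The core computation is to show that the prox-regular inequality for $f_0$ is algebraically identical to that for $f$. Writing $\q' \in \partial f_0(\x)$ as $\q' = \q - \bar{\q}$ with $\q \in \partial f(\x)$, the inequality $f_0(\x') \geq f_0(\x) + \langle\q', \x'-\x\rangle - r\D[\phi](\x',\x)$ expands to
\begin{align*}
f(\x') - \langle\bar{\q}, \x'\rangle \geq f(\x) - \langle\bar{\q}, \x\rangle + \langle\q - \bar{\q}, \x'-\x\rangle - r\D[\phi](\x',\x).
\end{align*}
Adding $\langle\bar{\q}, \x'\rangle$ to both sides, the linear terms involving $\bar{\q}$ cancel, leaving exactly $f(\x') \geq f(\x) + \langle\q, \x'-\x\rangle - r\D[\phi](\x',\x)$, the inequality \eqref{eq:prox_regularity} defining relative prox-regularity of $f$ for $\bar{\q}$. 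Crucially, the Bregman term $\D[\phi](\x',\x)$ depends only on $\phi$ and the arguments $\x',\x$ and is untouched by the tilt, so the \emph{same} modulus $r$ serves in both directions.

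It remains to match the $f$-attentive localizations. The ball constraints $\|\x'-\bar{\x}\|<\epsilon$, $\|\x-\bar{\x}\|<\epsilon$ (together with the requirement $\x,\x'\in\intr(\dom\phi)$) are identical for $f$ and $f_0$, and the subgradient-proximity condition coincides exactly, since $\|\q'-0\| = \|\q-\bar{\q}\|$. The only discrepancy is the value-drop constraint: $f_0(\x)-f_0(\bar{\x}) = f(\x)-f(\bar{\x}) - \langle\bar{\q}, \x-\bar{\x}\rangle$, which differs from $f(\x)-f(\bar{\x})$ by a term bounded by $\|\bar{\q}\|\,\|\x-\bar{\x}\| \leq \|\bar{\q}\|\epsilon$ on the $\epsilon$-ball.

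The main obstacle --- really the only point requiring care --- is this interplay of the single parameter $\epsilon$, which simultaneously controls the ball radius, the admissible value drop, and the subgradient proximity. I would dispose of it by the standard nesting argument: given $\epsilon$ witnessing prox-regularity of $f$, the perturbation $|\langle\bar{\q},\x-\bar{\x}\rangle|$ is continuous and vanishes at $\bar{\x}$, so on a possibly smaller ball the $f_0$-attentive sublevel neighborhood is contained in the $f$-attentive one (up to an additive $\|\bar{\q}\|\epsilon$ that can be absorbed by shrinking the value threshold), and the inequality derived above then holds throughout it. Since the transformation is involutive --- adding $\langle\bar{\q},\cdot\rangle$ back to $f_0$ recovers $f$ and sends $0$ to $\bar{\q}$ --- it suffices to argue one implication and invoke this symmetry for the converse, which completes the equivalence.
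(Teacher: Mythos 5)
Your proposal is correct and follows the same route as the paper, which simply declares the lemma ``clear from the definition''; your write-up is the fully spelled-out version of that direct verification. You correctly isolate the one point that is not purely formal --- the $f$-attentive value-drop constraint shifts by $\langle\bar{\q},\x-\bar{\x}\rangle$ --- and dispose of it by shrinking $\epsilon$, and the involutivity observation legitimately reduces the equivalence to one implication.
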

\begin{proof}
This is clear from the definition of relative prox-regularity.
\qed
\end{proof}

The following theorem is analogous to \cite[Theorem 3.2]{PoRo96} or \cite[Theorem 13.36]{RoWe98} for classical prox-regularity. More precisely, for a function $f$ and a reference point $(\bar \x, \bar \q) \in \gph \partial f$ we provide an equivalent characterization of relative prox-regularity in terms of the relative hypomonotonicity of an $f$-attentive graphical localization $T$ of the subdifferential $\partial f$ at $(\bar \x, \bar\q)$, defined as follows:
\begin{definition}[$f$-attentive localization of limiting subdifferential]
Let $\map{f}{\bR^m}{\exR}$ be finite at $\bar \x$. Let $\bar \q \in \partial f(\bar \x)$. Then for some $\eps > 0$ the $f$-attentive $\epsilon$-localization $T:\bR^m \rightrightarrows \bR^m$ of $\partial f$ around $(\bar \x, \bar \q)$ is defined by
\begin{align}
T(\x):=\begin{cases} \{\q \in \partial f(\x) : \|\q -\bar{\q}\|< \epsilon\} & \text{if $\|\x - \bar{\x}\| < \epsilon$ and $f(\x)<f(\bar{\x})+\epsilon$},\\
\emptyset, & \text{otherwise.}
\end{cases}
\end{align}
\end{definition}
Such a statement can be seen as a localized analogue to the equivalence between the relative hypoconvexity of $f$, i.e. $f+r\phi$ is convex on $\intr(\dom \phi)$ for some $r\geq0$, and the relative hypomonotonicity of $\partial f$.
An important difference to note is that, in the following statement, we also require $\bar \q$ to be a relatively proximal subgradient of $f$ at $\bar \x$.
\begin{theorem} \label{thm:prox_regularity_equiv}
Let $\phi \in \Gamma_0(\bR^m)$ be Legendre and super-coercive and the function $\map{f}{\bR^m}{\exR}$ be proper lsc, prox-bounded with threshold $\lambda_f$ and finite at $\bar{\x}\in \intr (\dom \phi)$. Then the following conditions are equivalent:
\begin{enumerate}
\item[\rm (i)] $f$ is relatively prox-regular at $\bar{\x}$ for $\bar{\q}$.
\item[\rm (ii)] $\bar{\q}\in\partial f(\bar{\x})$ is a relatively proximal subgradient and $\partial f$ has an $f$-attentive $\epsilon$-localization $T:\bR^m \rightrightarrows \bR^m$ around $(\bar{\x},\bar{\q})$ such that $T +r \nabla \phi$ is monotone for some $r>0$.

\item[\rm (iii)] For $\bar{\q} \in \partial f(\bar{\x})$ and $\lambda < \lambda_f$ sufficiently small it holds that $\lprox{\phi}{f}{\lambda}$ is a singled-valued map near the point $\bar{\y}:=\nabla \phi^*(\nabla\phi(\bar{\x}) + \lambda \bar{\q})$ such that \mynewline $\{\bar{\x} \}=\lprox{\phi}{f}{\lambda}(\bar{\y})$ and
\begin{align} \label{eq:equality_prox}
\lprox{\phi}{f}{\lambda}(\y) = {\left((\nabla \phi + \lambda T)^{-1} \circ \nabla \phi \right)}(\y),
\end{align}
for some $f$-attentive $\epsilon$-localization $T:\bR^m \rightrightarrows \bR^m$ of $\partial f$ around $(\bar{\x},\bar{\q})$ and $\y$ near $\bar{\y}$.
\end{enumerate}
\end{theorem}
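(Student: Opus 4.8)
The plan is to establish the cycle (i) $\Rightarrow$ (ii) $\Rightarrow$ (iii) $\Rightarrow$ (i), using throughout the correspondence between the relatively proximal subgradient inequality and fixed points of the left prox recorded in Proposition~\ref{prop:subgradient}, together with the globalization from Lemma~\ref{lem:global_ineq} and the continuity from Lemma~\ref{lem:continuity_left}.

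First I would prove (i) $\Rightarrow$ (ii) by the standard apply-twice-and-add device. Relative prox-regularity supplies $\epsilon > 0$ and $r \geq 0$, and I take $T$ to be the $f$-attentive $\epsilon$-localization of $\partial f$ around $(\bar\x, \bar\q)$ for this $\epsilon$. Setting $\x = \bar\x$, $\q = \bar\q$ in \eqref{eq:prox_regularity} immediately shows $\bar\q$ is a relatively proximal subgradient. For monotonicity, given $\q_i \in T(\x_i)$ I apply \eqref{eq:prox_regularity} once with $(\x, \x') = (\x_1, \x_2)$ and once with $(\x, \x') = (\x_2, \x_1)$ and add; the symmetrization identity $\D[\phi](\x_1, \x_2) + \D[\phi](\x_2, \x_1) = \iprod{\nabla\phi(\x_1) - \nabla\phi(\x_2)}{\x_1 - \x_2}$ makes the distance terms collapse, leaving exactly $\iprod{(\q_1 + r\nabla\phi(\x_1)) - (\q_2 + r\nabla\phi(\x_2))}{\x_1 - \x_2} \geq 0$, i.e.\ monotonicity of $T + r\nabla\phi$ (if $r=0$ works, so does any $r>0$, since adding a monotone $\nabla\phi$ preserves monotonicity).

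The heart of the argument, and the step I expect to be the main obstacle, is (ii) $\Rightarrow$ (iii). Since $\bar\q$ is a relatively proximal subgradient, Proposition~\ref{prop:subgradient} yields, for $\lambda < \min\{1/r, \lambda_f\}$ small enough, $\{\bar\x\} = \lprox{\phi}{f}{\lambda}(\bar\y)$ with $\bar\y = \nabla\phi^*(\nabla\phi(\bar\x) + \lambda\bar\q)$; this anchors the analysis. The continuity properties of Lemma~\ref{lem:continuity_left} force every $\x \in \lprox{\phi}{f}{\lambda}(\y)$ to lie near $\bar\x$ for $\y$ near $\bar\y$, and, writing $\q := \tfrac1\lambda(\nabla\phi(\y) - \nabla\phi(\x)) \in \partial f(\x)$ from Fermat's rule, continuity of $\nabla\phi$ together with continuity of the envelope forces $\q$ near $\bar\q$ and $f(\x) < f(\bar\x) + \epsilon$, so that $\q \in T(\x)$. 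Single-valuedness then follows by feeding two such prox points $\x_1, \x_2$ into the monotonicity of $T + r\nabla\phi$: because $\q_1 - \q_2 = -\tfrac1\lambda(\nabla\phi(\x_1) - \nabla\phi(\x_2))$, the monotonicity inequality reduces to $(r - \tfrac1\lambda)\iprod{\nabla\phi(\x_1) - \nabla\phi(\x_2)}{\x_1 - \x_2} \geq 0$, which is impossible unless $\x_1 = \x_2$ since $r - \tfrac1\lambda < 0$ while $\phi$ is strictly convex. The same optimality condition places each prox point in $(\nabla\phi + \lambda T)^{-1}(\nabla\phi(\y))$, and the identical strict-monotonicity computation shows this set is itself single-valued; as it contains the nonempty prox value, the two coincide, yielding the formula \eqref{eq:equality_prox}.

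Finally, (iii) $\Rightarrow$ (i) reverses the first reduction. For $\x$ near $\bar\x$ and $\q \in T(\x)$, set $\y := \nabla\phi^*(\nabla\phi(\x) + \lambda\q)$, which is near $\bar\y$; the formula \eqref{eq:equality_prox} gives $\x \in \lprox{\phi}{f}{\lambda}(\y)$, and single-valuedness upgrades this to $\{\x\} = \lprox{\phi}{f}{\lambda}(\y)$. Proposition~\ref{prop:subgradient}, applied at $\x$ with modulus $r = 1/\lambda$, then returns the global inequality $f(\x') \geq f(\x) + \iprod{\q}{\x' - \x} - \tfrac1\lambda\D[\phi](\x', \x)$ for all $\x'$; restricting $\x'$ to the neighborhood and letting $(\x, \q)$ range over the localization is precisely relative prox-regularity with modulus $1/\lambda$. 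The delicate bookkeeping throughout is the simultaneous choice of $\epsilon$ and $\lambda$ keeping all neighborhood and $f$-attentiveness constraints compatible; the strict convexity of $\phi$ (Lemma~\ref{lem:bregman_props}) is exactly what converts mere monotonicity of $T + r\nabla\phi$ into the strict monotonicity required for single-valuedness.
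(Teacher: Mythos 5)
Your proposal is correct and follows essentially the same route as the paper: the cycle (i)~$\Rightarrow$~(ii)~$\Rightarrow$~(iii)~$\Rightarrow$~(i), with the apply-twice-and-add symmetrization for hypomonotonicity, Proposition~\ref{prop:subgradient} plus Lemma~\ref{lem:continuity_left} to anchor and localize the prox in (ii)~$\Rightarrow$~(iii), strict monotonicity of $T+\tfrac1\lambda\nabla\phi$ for single-valuedness and the formula~\eqref{eq:equality_prox}, and the reversal via Proposition~\ref{prop:subgradient} and Lemma~\ref{lem:global_ineq} for (iii)~$\Rightarrow$~(i). No substantive differences or gaps.
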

\begin{proof} 
(i) $\implies$ (ii): Let $f$ be relatively prox-regular at $\bar{\x}$ for $\bar{\q}\in\partial f(\bar{\x})$. This means that there exist constants $\epsilon >0$ and $r>0$ such that the subgradient inequality \eqref{eq:prox_regularity} holds for $\x',\x \in \bR^m$ with $\|\x'-\bar{\x}\| < \epsilon$, $\|\x-\bar{\x}\| < \epsilon$ and $\q\in \partial f(\x)$, $\q' \in \partial f(\x')$, $\|\q'-\bar{\q}\| < \epsilon$, $\|\q-\bar{\q}\| < \epsilon$. In particular this implies that $\bar{\q}$ is a relatively proximal subgradient at $\bar{\x}$ and we have:
$$
f(\x') \geq f(\x)+\iprod{\q}{\x'-\x}-r \D[\phi](\x',\x), \quad f(\x) \geq f(\x')+\iprod{\q'}{\x-\x'}-r \D[\phi](\x,\x').
$$
Adding these inequalities yields:
\begin{align*}
0 &\geq \iprod{\q}{\x'-\x} + \iprod{\q'}{\x-\x'}-r\big(\D[\phi](\x',\x) + \D[\phi](\x,\x')\big) \\
&=-\iprod{\q-\q'}{\x-\x'} - r\iprod{\nabla \phi(\x) - \nabla \phi(\x')}{\x-\x'}.
\end{align*}
This shows that the corresponding map $T+r\nabla \phi$ is monotone, where $T$ is the $f$-attentive $\epsilon$-localization of $\partial f$ at $(\bar{\x},\bar{\q})$.

(ii) $\implies$ (iii): By assumption, $\bar{\q}$ is a relatively proximal subgradient. Then we may invoke Proposition~\ref{prop:subgradient} to obtain that $\{\bar{\x}\} = \lprox{\phi}{f}{\lambda}(\bar{\y})$ is a singleton for $\lambda < \min\{\lambda_f, 1/r\}$ being sufficiently small. Due to the prox-boundedness, we can invoke Lemma~\ref{lem:continuity_left} to assert that $\lprox{\phi}{f}{\lambda}(\y) \neq \emptyset$ for any $\y\in \intr (\dom \phi)$. Furthermore, for any sequence $\x^\nu \in \lprox{\phi}{f}{\lambda}(\y^\nu)$, $\y^\nu \to \bar{\y}$ we have $\{\x^\nu\}_{\nu \in \bN}$ is bounded and all its cluster points lie in $\lprox{\phi}{f}{\lambda}(\bar{\y}) = \{\bar{\x}\}$, meaning $\x^\nu \to \bar{\x}$ and $\lenv{\phi}{f}{\lambda}(\y^\nu) \to \lenv{\phi}{f}{\lambda} (\bar{\y})$. In addition we have $f(\x^\nu) \to f(\bar{\x})$ as 
$$
\lenv{\phi}{f}{\lambda}(\y^\nu) = f(\x^\nu) + \frac1\lambda\D[\phi](\x^\nu,\y^\nu) \to \lenv{\phi}{f}{\lambda}(\bar{\y})=f(\bar{\x}) +\frac1\lambda\D[\phi](\bar{\x},\bar{\y}).
$$
Overall this shows that for any $\y$, which is sufficiently near to $\bar{\y}$, we have $\x\in \lprox{\phi}{f}{\lambda}(\y)$, $\|\x-\bar{\x}\| < \epsilon$, $|f(\x) - f(\bar{\x})| < \epsilon$ and $\|\q -\bar{\q}\| <\epsilon$, for \mynewline $\q:=(1/\lambda) \nabla\phi(\y)-(1/\lambda) \nabla\phi(\x)$ due to the continuity of $\nabla \phi$ (cf. Lemma~\ref{lem:legendre_props}).
By applying Fermat's rule~\cite[Theorem 10.1]{RoWe98} to $\lprox{\phi}{f}{\lambda}(\y)$ we obtain
$$
0 \in \partial f(\x) +\frac{1}{\lambda}(\nabla \phi(\x)-\nabla \phi(\y)),
$$
or equivalently
$$
0 \in T(\x) + r (\nabla \phi(\x)-\nabla \phi(\y)),
$$
where $\partial f(\x)$ is replaced by $T(\x)$ due to the arguments above.
This means 
\begin{align*}
\emptyset &\neq \lprox{\phi}{f}{\lambda}(\y) \subset \left\{\x \in \bR^m: 0 \in T(\x) + \frac{1}{\lambda} (\nabla \phi(\x)-\nabla \phi(\y)) \right\} \\
&=\big((\nabla \phi + \lambda T)^{-1} \circ \nabla \phi \big)(\y),
\end{align*}
which is at most a singleton due to the strict monotonicity of $T+(1/\lambda)\nabla \phi$ for $\lambda < 1/r$. This implies $\{\x\}=\lprox{\phi}{f}{\lambda}(\y)$ is a singleton for $\y$ near $\bar{\y}$.

(iii) $\implies$ (i):
Let $T$ be some $f$-attentive $\epsilon$-localization of $\partial f$ at $\bar \x$ for $\bar \q$, which has the properties in (iii). Let $\x\in\bR^m$ with $\|\x-\bar{\x}\| < \epsilon$, $f(\x) < f(\bar{\x})+\epsilon$ and $\q \in \partial f(\x)$, $\|\q-\bar{\q}\| <\epsilon$. We have $\q \in T(\x)$ and for $\epsilon >0$ sufficiently small $\x\in\intr(\dom \phi)$ and $\y:=\nabla \phi^*(\nabla \phi(\x) +\lambda \q)$ near $\nabla \phi^*(\nabla \phi(\bar{\x}) +\lambda \bar{\q}))$, due to the continuity of $\nabla \phi^*$ guaranteed by Lemma~\ref{lem:legendre_props}.
Then for such $\y$ we have that $\x \in ((\nabla \phi + \lambda T)^{-1} \circ \nabla \phi)(\y)$ and by assumption 
$$
\lprox{\phi}{f}{\lambda}(\y)=\big((\nabla \phi + \lambda T)^{-1} \circ \nabla \phi \big)(\y) \ni \x.
$$
Invoking Proposition~\ref{prop:subgradient} we obtain the subgradient inequality \eqref{eq:prox_regularity} for $r:=1/\lambda$, which holds even globally, cf. Lemma~\ref{lem:global_ineq}. We may conclude $f$ is relatively prox-regular at $\bar{\x}$ for $\bar{\q}$.
\qed
\end{proof}

\begin{remark}
We would highlight that items (i) and (ii) in the above theorem only depend on the local structure of the epigraph of $f$ near $(\bar \x,f(\bar \x))$, while in contrast, (iii) depends on its global structure. This means that (i) resp. (ii) hold for $f$ if and only if they hold for $\tilde{f}:=f+\delta_C$ for \mynewline $C:=\{\x \in \bR^m : \|\x - \bar{\x}\| \leq \epsilon, f(\x)\leq f(\bar{\x})+\epsilon \}$
compact, where $\tilde{f}$ is always proper lsc and prox-bounded whenever $f$ is locally lsc at $\bar \x$, a point where $f$ is finite. This shows that the equivalence between (i) and (ii) holds even if we relax the globally lsc assumption towards locally lsc at $\bar \x$ and entirely drop the prox-boundedness assumption. In that sense (iii) can be seen as an auxiliary statement applied to $\tilde{f}$ to show the direction (ii) implies (i), which is also used as a strategy in the proof of \cite[Theorem 13.36]{RoWe98}.
\end{remark}
\begin{remark} \label{rem:strong_convexity}
When $\phi$ is strongly convex on compact convex subsets \mynewline $K\subset \intr(\dom \phi)$ (which is implied by very strict convexity, see Lemma~\ref{lem:bregman_props}(iii), but holds more generally, for, e.g. $\phi(\x)=(1/p)|\x|^p$, $p\in ]1,2[$), the direction (ii) implies (i) follows alternatively from \cite[Theorem 3.2]{PoRo96} or \cite[Theorem 13.36]{RoWe98}. To this end, let $\bar \q$ be a relatively proximal subgradient of $f$ at $\bar \x \in \intr(\dom \phi)$ and let $T$ be a relatively hypomonotone, $f$-attentive $\epsilon$-localization of $\partial f$ at $\bar \x$ for $\bar \q$. This means that there is $r>0$ such that $\tilde \q := \bar \q + r \nabla \phi(\bar \x)$ is a classical proximal subgradient of $\tilde{f} := f + r\phi$ at $\bar \x$ and $\widetilde T := T + r\nabla \phi$ is monotone. Furthermore $\widetilde{T}$ is a $\tilde f$-attentive graphical localization of $\partial \tilde f$ at $(\bar \x, \tilde v)$. Invoking \cite[Theorem 13.36]{RoWe98} this means that $\tilde{f}$ is classically prox-regular at $\bar \x$ for $\tilde \q$. Due to the strong convexity of $\phi$ on compact convex subsets $K\subset \intr(\dom \phi)$ we can bound the negative quadratic term $-(1/2)\|\x'-\x\|^2$ in the classical subgradient inequality (locally) by a Bregman distance $-\theta\D[\phi](\x',\x)$. Rewriting the estimate gives us the result. In the general case, however, existing theory cannot be applied in its present form, which leads us to provide a generalization by means of the above theorem.
\end{remark}

\begin{corollary} \label{cor:prox_lipschitz}
Let $\phi \in \Gamma_0(\bR^m)$ be Legendre and super-coercive. Let the function $\map{f}{\bR^m}{\exR}$ be proper lsc, prox-bounded with threshold $\lambda_f$, and relatively prox-regular at $\bar{\x}\in \intr (\dom \phi)$ for $\bar{\q}$. Assume that $\phi$ is very strictly convex. Then for $\lambda < \lambda_f$ sufficiently small, $\lprox{\phi}{f}{\lambda}$ is a Lipschitz map on a neighborhood of $\bar{\y}:=\nabla\phi^*(\nabla\phi(\bar{\x}) + \lambda \bar{\q})$.
\end{corollary}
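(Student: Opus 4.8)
The plan is to reduce everything to Theorem~\ref{thm:prox_regularity_equiv}, which already supplies a resolvent representation of the prox, and then to run the classical argument that ``the resolvent of a strongly monotone operator is Lipschitz,'' transported to the Bregman geometry by means of the very strict convexity of $\phi$. Concretely, since $f$ is relatively prox-regular at $\bar{\x}$ for $\bar{\q}$, the equivalence (i)$\iff$(ii) of Theorem~\ref{thm:prox_regularity_equiv} furnishes an $f$-attentive $\eps$-localization $T$ of $\partial f$ around $(\bar{\x},\bar{\q})$ with $T + r\nabla\phi$ monotone for some $r>0$, and (iii) yields, for $\lambda < \min\{\lambda_f, 1/r\}$ sufficiently small, single-valuedness of $\lprox{\phi}{f}{\lambda}$ near $\bar{\y}$ together with $\lprox{\phi}{f}{\lambda}(\y) = \big((\nabla\phi + \lambda T)^{-1}\circ\nabla\phi\big)(\y)$. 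Writing $P := \lprox{\phi}{f}{\lambda}$, it then suffices to show that this single-valued $P$ is Lipschitz on a neighborhood of $\bar{\y}$.

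Next I would pin down the quantitative constants on a fixed compact convex set. Because $P$ is single-valued and, by Lemma~\ref{lem:continuity_left}(iii), its values cluster only in $P(\bar{\y}) = \{\bar{\x}\}$, the map $P$ is continuous at $\bar{\y}$; hence I may pick a convex neighborhood $V$ of $\bar{\y}$ whose closure and whose image $P(V)$ both lie in a compact convex set $K \subset \intr(\dom\phi)$. On $K$, very strict convexity provides constants $0 < \theta \le \Theta$ with strong monotonicity of $\nabla\phi$, namely $\iprod{\nabla\phi(\x) - \nabla\phi(\x')}{\x - \x'} = \D[\phi](\x,\x') + \D[\phi](\x',\x) \ge \theta\|\x-\x'\|^2$ via Lemma~\ref{lem:bregman_props}(iii), and Lipschitz continuity $\|\nabla\phi(\y) - \nabla\phi(\y')\| \le \Theta\|\y - \y'\|$ of the $\mathcal{C}^1$ map $\nabla\phi$ on the compact convex $K$.

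The core is then a short monotonicity computation. For $\y_1, \y_2 \in V$ set $\x_i := P(\y_i) \in K$; the resolvent identity gives $\q_i \in T(\x_i)$ with $\nabla\phi(\y_i) = \nabla\phi(\x_i) + \lambda\q_i$, i.e.\ $\q_i = \tfrac1\lambda(\nabla\phi(\y_i) - \nabla\phi(\x_i))$. Applying monotonicity of $T + r\nabla\phi$ to $(\x_1,\q_1)$ and $(\x_2,\q_2)$,
\[
\iprod{\q_1 - \q_2}{\x_1 - \x_2} \ge -r\iprod{\nabla\phi(\x_1) - \nabla\phi(\x_2)}{\x_1 - \x_2},
\]
and substituting the expression for $\q_i$ and rearranging yields
\[
\iprod{\nabla\phi(\y_1) - \nabla\phi(\y_2)}{\x_1 - \x_2} \ge (1 - \lambda r)\iprod{\nabla\phi(\x_1) - \nabla\phi(\x_2)}{\x_1 - \x_2} \ge (1 - \lambda r)\theta\|\x_1 - \x_2\|^2,
\]
where $1 - \lambda r > 0$ by the choice of $\lambda$. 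Bounding the left-hand side by Cauchy--Schwarz and the Lipschitz estimate for $\nabla\phi$ gives $(1-\lambda r)\theta\|\x_1 - \x_2\|^2 \le \Theta\|\y_1 - \y_2\|\,\|\x_1 - \x_2\|$, hence $\|P(\y_1) - P(\y_2)\| \le \tfrac{\Theta}{(1-\lambda r)\theta}\|\y_1 - \y_2\|$, the desired Lipschitz bound.

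The step I expect to be most delicate is not the algebra but the localization bookkeeping: ensuring that both prox values $\x_1,\x_2$ remain in a single compact convex $K$ on which the two-sided bounds $\theta \le \Theta$ are valid, while $\lambda$ is simultaneously below $\lambda_f$, below $1/r$, and small enough for the single-valued resolvent representation of Theorem~\ref{thm:prox_regularity_equiv}(iii) to hold. This is precisely where very strict convexity is indispensable: it replaces the identity map of the Euclidean resolvent by $\nabla\phi$ and supplies its local bi-Lipschitz behavior, whereas for merely Legendre $\phi$ (for instance $\phi = (1/p)|\cdot|^p$ with $p \neq 2$) the lower constant $\theta$ may degenerate and the estimate would fail.
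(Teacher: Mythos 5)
Your proposal is correct and follows essentially the same route as the paper's proof: invoke Theorem~\ref{thm:prox_regularity_equiv} to get the monotone localization $T+r\nabla\phi$ and the resolvent representation, add the two monotonicity inequalities for the subgradients $\tfrac1\lambda(\nabla\phi(\y_i)-\nabla\phi(\x_i))\in T(\x_i)$, and then sandwich via the two-sided bounds $\theta,\Theta$ from Lemma~\ref{lem:bregman_props}(iii) and Cauchy--Schwarz to obtain the Lipschitz constant $\Theta/\bigl(\theta(1-\lambda r)\bigr)$. Your extra care in fixing a single compact convex $K$ containing both $V$ and $P(V)$ is a welcome tightening of a localization detail the paper treats only implicitly.
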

\begin{proof}
Since $f$ is relatively prox-regular at $\bar{\x}$ for $\bar{\q}\in\partial f(\bar{\x})$ due to Theorem~\ref{thm:prox_regularity_equiv} there exists $r>0$ such that $T+ r \nabla \phi$ is monotone. This means for $(\x', \q'), (\x,\q) \in \gph T$ we have:
\begin{align*}
\iprod{\q'-\q}{\x'-\x}+r\iprod{\nabla \phi(x')-\nabla \phi(x)}{\x'-\x} \geq 0.
\end{align*}
Let $\x \in \lprox{\phi}{f}{\lambda}(\y)$ and $\x' \in \lprox{\phi}{f}{\lambda}(\y')$. Due to Theorem~\ref{thm:prox_regularity_equiv} we know that $(1/\lambda)(\nabla \phi(\y) - \nabla \phi(\x)) \in T(\x)$ and $(1/\lambda)(\nabla \phi(\y') - \nabla \phi(\x')) \in T(\x')$. This means we have
\begin{align*}
\frac{1}{\lambda}\iprod{\nabla \phi(\y') - \nabla \phi(\y)}{\x'-\x} \geq \left(\frac{1}{\lambda} - r\right)\iprod{\nabla \phi(\x')-\nabla \phi(\x)}{\x'-\x} .
\end{align*}
Since $\phi$ is very strictly convex we may invoke Lemma~\ref{lem:bregman_props}(iii) to assert that there are constants $\Theta$ and $\theta$ such that for any $\x,\x' \in \intr(\dom \phi)$ near $\bar{\x}$:
\begin{align*}
 \langle \nabla \phi(\x') - \nabla \phi(\x), \x' - \x \rangle &\leq \Theta \|\x - \x'\|^2, \\
 \langle \nabla \phi(\x') - \nabla \phi(\x), \x' - \x \rangle &\geq \theta \|\x - \x'\|^2.
\end{align*}
This yields
\begin{align*}
\iprod{\nabla \phi(\y') - \nabla \phi(\y)}{\x'-\x} \geq \left(1 - \lambda r\right) \theta\|\x - \x'\|^2,
\end{align*}
and via Cauchy--Schwarz
\begin{align*}
\iprod{\nabla \phi(\y') - \nabla \phi(\y)}{\x'-\x} &\leq \|\nabla \phi(\y') - \nabla \phi(\y)\|\cdot\|\x'-\x\| \\
&\leq \Theta\|\y'-\y\|\cdot\|\x'-\x\|,
\end{align*}
and overall
\begin{align*}
\|\x - \x'\| \leq \frac{\Theta}{\theta(1 - \lambda r)} \|\y-\y'\|.
\end{align*}
\qed
\end{proof}

\subsection{Relatively Amenable Functions} \label{sec:examples_prox_regular}
An important source for examples of prox-regular functions is strong amenability \cite[Definition 10.23]{RoWe98}, i.e. functions $f$ that can locally be represented as a composition of a convex function with a smooth function and a certain constraint qualification. In the following we generalize this concept to the Bregman distance case. To this end, the recently introduced generalization of $L$-smooth functions to $L$-smooth adaptable ($L$-smad) functions \cite{BBT2016,BSTV2018} (called relatively smooth functions in \cite{LFN18}) is used. We state a slightly modified version, where we introduce an additional open subset $V \subseteq \intr(\dom \phi)$ of $\intr(\dom \phi)$ and require the property to hold only on $V$ instead of $\intr(\dom \phi)$.
\begin{definition}[Smooth adaptable function ($L$-smad)]
Let $\phi \in \Gamma_0(\bR^m)$ be Legendre. A function $\map{f}{\bR^m}{\exR}$ that is $\mathcal{C}^1$ on an open subset \mynewline $V \subseteq \intr(\dom \phi)$ is called $L$-smooth adaptable relative to $\phi$ on $V$, if there exists $L\geq0$ such that both $L\phi - f$ and $L\phi + f$ are convex on $V$.
\end{definition}
The following lemma, which we adopted from Lemma~\cite[Lemma 2.1]{BSTV2018}, is a generalization of the classical full descent lemma to the $L$-smad case:
\begin{lemma}[Full Extended Descent Lemma] \label{lem:full_descent}
Let $\phi \in \Gamma_0(\bR^m)$ be Legendre. Then, a function $\map{f}{\bR^m}{\exR}$ that is $\mathcal{C}^1$ on an open subset $V \subseteq \intr(\dom \phi)$ is $L$-smooth adaptable relative to $\phi$ on $V$ with $L\geq 0$ if and only if the following holds for all $\x,\y \in V$
$$
|f(\x) - f(\y) - \langle \nabla f(\y), \x -\y \rangle| \leq L \D[\phi](\x,\y).
$$
\end{lemma}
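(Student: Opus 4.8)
The plan is to prove both implications by a single algebraic rearrangement, exploiting the fact that $L\D[\phi](\x,\y)$ is \emph{exactly} the first-order Taylor remainder of $L\phi$, and that for a $\mathcal{C}^1$ function convexity is captured by the gradient inequality. First I would record the regularity needed: on $V\subseteq\intr(\dom\phi)$ the kernel $\phi$ is differentiable (by essential smoothness, Definition~\ref{def:legendre}(i) together with Lemma~\ref{lem:legendre_props}(i)) and $f$ is $\mathcal{C}^1$ by hypothesis, so the functions $g_{\pm}:=L\phi\pm f$ are $\mathcal{C}^1$ on $V$ with $\nabla g_{\pm}(\y)=L\nabla\phi(\y)\pm\nabla f(\y)$. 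The differentiable characterization of convexity then reads, for $g_{-}=L\phi-f$,
\[
(L\phi-f)(\x)\ \ge\ (L\phi-f)(\y)+\iprod{L\nabla\phi(\y)-\nabla f(\y)}{\x-\y},\qquad \x,\y\in V,
\]
and analogously for $g_{+}=L\phi+f$.

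For the forward direction I would assume $L$-smooth adaptability and simply rearrange the displayed inequality for $g_{-}$. Subtracting the affine and quadratic-in-$\phi$ terms and recalling $\D[\phi](\x,\y)=\phi(\x)-\phi(\y)-\iprod{\nabla\phi(\y)}{\x-\y}$ turns it into $f(\x)-f(\y)-\iprod{\nabla f(\y)}{\x-\y}\le L\D[\phi](\x,\y)$. The same manipulation applied to $g_{+}$ yields $-(f(\x)-f(\y)-\iprod{\nabla f(\y)}{\x-\y})\le L\D[\phi](\x,\y)$, and combining the two one-sided bounds produces the claimed absolute-value estimate. For the converse I would run the computation backwards: splitting $|f(\x)-f(\y)-\iprod{\nabla f(\y)}{\x-\y}|\le L\D[\phi](\x,\y)$ into its two one-sided inequalities and adding the appropriate multiple of the $\phi$-remainder recovers precisely the gradient inequality for $g_{-}$ (from the upper bound) and for $g_{+}$ (from the lower bound), so both are convex on $V$.

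The calculation itself is pure bookkeeping; the one genuine point to pin down is the meaning of ``convex on $V$'' when $V$ is a general open subset that need not be convex. I would resolve this by taking convexity in the first-order sense, i.e.\ the gradient inequality holding between \emph{all} pairs of points of $V$, which is exactly the condition matched by requiring the estimate for all $\x,\y\in V$; this makes the equivalence symmetric and avoids any appeal to segments lying inside $V$. If instead one prefers classical convexity along line segments, one restricts to the case where $V$ is convex, on which the two notions agree, so no generality relevant to the later use of the lemma is lost. Beyond this interpretive remark I expect no obstacle, as the statement is a direct transcription of the Euclidean descent lemma with $\tfrac12\|\cdot\|^2$ replaced by $\phi$ and the quadratic remainder replaced by $\D[\phi]$.
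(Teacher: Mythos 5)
Your proof is correct and is exactly the standard argument: the paper itself does not prove this lemma but only cites \cite[Lemma 2.1]{BSTV2018}, whose proof is the same rearrangement of the $\mathcal{C}^1$ gradient inequality for $L\phi\pm f$ that you carry out. Your side remark about the meaning of ``convex on $V$'' for a non-convex open $V$ is a genuine point the paper glosses over; adopting the gradient-inequality reading (or shrinking $V$ to a convex neighborhood, which is all that is needed where the lemma is applied in Proposition~\ref{prop:amenable_implies_pr}) is the right resolution, since the segment-wise notion would not by itself yield the estimate for pairs $\x,\y$ whose connecting segment leaves $V$.
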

\begin{definition}
A function $\map{F}{\bR^m}{\bR^n}$ that is $\mathcal{C}^1$ on an open subset \mynewline$V\subseteq \intr(\dom\phi)$ is called $L$-smooth adaptable relative to $\phi$ on $V$ with $L\geq 0$ if each coordinate function $F_i$ is $L$-smooth adaptable relative to $\phi$ on $V$.
\end{definition}

We extend \cite[Definition 10.23]{RoWe98} to a setting where the inner smooth map is $L$-smooth adaptable. Note that this property is required to hold only on a local neighborhood of a reference point. The first part recapitulates the definition of an amenable function from \cite[Definition 10.23(a)]{RoWe98}, while a relatively amenable function generalizes the notion of strong amenability \cite[Definition 10.23(b)]{RoWe98}.
\begin{definition}[Relatively amenable functions]
A function $\map{f}{\bR^m}{\exR}$ is amenable at $\bar\x$, a point where $f(\bar \x)$ is finite, if there is an open neighborhood $V \subset \bR^m$ of $\bar \x$ on which $f$ can be represented in the form $f=g\circ F$ for a $\mathcal{C}^1$ mapping $\map{F}{V}{\bR^n}$ and a proper, lsc, convex function $\map{g}{\bR^n}{\exR}$ such that, in terms of $D=\cl(\dom g)$,
\begin{align} \label{eq:contraint_qualification_chain_rule}
\text{the only $\y \in N_{D}(F(\bar \x))$ with $\nabla F(\bar \x)^* y = 0$ is $\y = 0$.}
\end{align}
If the mapping $F$ is $L$-smooth adaptable relative to $\phi\in \Gamma_0(\bR^m)$ on \mynewline$V\subseteq \intr(\dom \phi)$ it is called relatively amenable at $\bar \x \in V$ relative to $\phi$.
\end{definition}
Clearly, the constraint qualification~\eqref{eq:contraint_qualification_chain_rule} is satisfied whenever $F(\bar \x) \in \intr(\dom g)$.

In the following proposition, we show that relatively amenable functions are indeed relatively prox-regular, which is completely analogous to the classical setting of strong amenability and prox-regularity \cite[Proposition 13.32]{RoWe98}. Actually, this also generalizes the classical Euclidean setting with $\phi=(1/2)\|\cdot\|^2$ to requiring the inner functions to be only $\mathcal{C}^1$ with a locally Lipschitz continuous gradient instead of $\mathcal{C}^2$.
\begin{proposition} \label{prop:amenable_implies_pr}
Let $\phi \in \Gamma_0(\bR^m)$ be Legendre and $\map{f}{\bR^m}{\exR}$ be relatively amenable at $\bar \x \in \intr(\dom \phi)$ relative to $\phi$. Then $f$ is prox-regular relative to $\phi$ at $\bar \x$.
\end{proposition}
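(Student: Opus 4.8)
The plan is to mirror the classical argument \cite[Proposition 13.32]{RoWe98}, replacing its second-order Taylor estimate by the Full Extended Descent Lemma (Lemma~\ref{lem:full_descent}). Write $f=g\circ F$ on an open neighborhood $V\subseteq\intr(\dom\phi)$ of $\bar\x$, with $g\in\Gamma_0(\bR^n)$ proper lsc convex and $F$ being $L$-smooth adaptable relative to $\phi$ on $V$. Since $F$ is continuous and $g$ is lsc, the composition $f$ is locally lsc at $\bar\x$, and $f(\bar\x)$ is finite by hypothesis, so the standing requirements in the definition of relative prox-regularity are met. First I would invoke the subgradient chain rule under the constraint qualification \eqref{eq:contraint_qualification_chain_rule}: by \cite[Theorem 10.6]{RoWe98} one has $\partial f(\x)=\nabla F(\x)^*\partial g(F(\x))$, so in particular any reference subgradient $\bar\q\in\partial f(\bar\x)$ factors as $\bar\q=\nabla F(\bar\x)^*\bar\y$ with $\bar\y\in\partial g(F(\bar\x))$.

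The main obstacle is a uniform bound on the multipliers. I claim there are $\epsilon>0$ and $M>0$ such that for every $\x\in\intr(\dom\phi)$ with $\|\x-\bar\x\|<\epsilon$ and $f(\x)<f(\bar\x)+\epsilon$, and every $\q\in\partial f(\x)$ with $\|\q-\bar\q\|<\epsilon$, one may pick $\y\in\partial g(F(\x))$ with $\q=\nabla F(\x)^*\y$ and $\|\y\|_1\le M$. This is proved by contradiction: if it failed, there would be sequences $\x^\nu\to\bar\x$, $\q^\nu\to\bar\q$ and $\y^\nu\in\partial g(F(\x^\nu))$ with $\nabla F(\x^\nu)^*\y^\nu=\q^\nu$ but $\|\y^\nu\|\to\infty$. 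Normalizing, $\y^\nu/\|\y^\nu\|\to\bar\w\neq 0$, and passing to the limit using continuity of $\nabla F$ together with outer semicontinuity of the horizon subgradient map of the convex function $g$ — whose horizon subgradients at $F(\bar\x)$ are exactly the normals $N_D(F(\bar\x))$ — yields $\bar\w\in N_D(F(\bar\x))$ with $\nabla F(\bar\x)^*\bar\w=0$, contradicting \eqref{eq:contraint_qualification_chain_rule}. The same argument shows that the constraint qualification, and hence the chain rule, persists throughout a neighborhood of $\bar\x$, so the factorization is available uniformly.

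With these ingredients the prox-regularity inequality \eqref{eq:prox_regularity} becomes a short computation. Fix $\x,\x'$ near $\bar\x$ and $\q\in\partial f(\x)$ as above, with multiplier $\y$. Convexity of $g$ gives $f(\x')=g(F(\x'))\ge g(F(\x))+\langle\y,F(\x')-F(\x)\rangle$. Writing $F(\x')-F(\x)=\nabla F(\x)(\x'-\x)+R(\x',\x)$, the remainder has components $R_i=F_i(\x')-F_i(\x)-\langle\nabla F_i(\x),\x'-\x\rangle$, each bounded via Lemma~\ref{lem:full_descent} (applied coordinatewise) by $|R_i|\le L\,\D[\phi](\x',\x)$; combined with $\langle\nabla F(\x)^*\y,\x'-\x\rangle=\langle\q,\x'-\x\rangle$ this yields
$$
f(\x')\ge f(\x)+\langle\q,\x'-\x\rangle-L\|\y\|_1\,\D[\phi](\x',\x)\ge f(\x)+\langle\q,\x'-\x\rangle-r\,\D[\phi](\x',\x),
$$
with $r:=LM$. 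Specializing $\x=\bar\x$ exhibits $\bar\q$ as a relatively proximal subgradient, so all defining conditions hold, and since $\bar\q\in\partial f(\bar\x)$ was arbitrary we conclude that $f$ is relatively prox-regular at $\bar\x$.

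I expect the only delicate point to be the multiplier bound of the second paragraph: everything else is bookkeeping, but the boundedness relies essentially on the constraint qualification together with the outer semicontinuity of $\partial g$ along $F(\x^\nu)\to F(\bar\x)$, and it is what allows the single modulus $r$ to be chosen uniformly over the $f$-attentive neighborhood. The remaining subtlety is to confirm that the horizon subgradients of the convex $g$ coincide with $N_D$ at the relevant points, which is standard for $g\in\Gamma_0(\bR^n)$ and ensures the limiting direction $\bar\w$ indeed lands in the cone appearing in \eqref{eq:contraint_qualification_chain_rule}.
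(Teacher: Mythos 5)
Your proposal is correct and follows essentially the same route as the paper's proof: chain rule under the constraint qualification, a uniform bound on the multipliers $\y\in\partial g(F(\x))$ obtained by contradiction from the CQ via horizon subgradients, and then convexity of $g$ combined with the coordinatewise application of Lemma~\ref{lem:full_descent} to absorb the remainder into $-L\|\y\|_1\,\D[\phi](\x',\x)$. The only cosmetic difference is in the contradiction argument for the multiplier bound, where the paper decomposes $u^\nu$ into kernel and range components of $\nabla F(\x^\nu)^*$ while you normalize directly and pass to the limit; both yield the same conclusion.
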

\begin{proof}
Since $f$ is relatively amenable at $\bar \x\in\intr(\dom\phi)$ relative to $\phi$, there exists an open neighborhood $V\subseteq\intr(\dom\phi)$ of $\bar \x$ on which $f=g\circ F$ for a proper lsc convex function $\map{g}{\bR^n}{\exR}$ and a $\mathcal{C}^1$ map $\map{F}{V}{\bR^m}$ that is $L$-smooth adaptable relative to $\phi$ on $V$. Clearly, $f$ is lsc relative to $V$ and therefore in particular locally lsc at $\bar \x$. Note that the constraint qualification \eqref{eq:contraint_qualification_chain_rule} holds not only at $\bar \x$ but also on $V$, by possibly narrowing $V$. Otherwise there exists a sequence $\x^\nu \to \bar \x$ and $0 \neq \y^\nu \in N_{D}(F(\x^\nu))$ with $\nabla F(\x^\nu)^* y^\nu = 0$ where we may assume $\|\y^\nu\| = 1$ by normalizing. Taking a convergent subsequence of $\{\y^{\nu}\}_{\nu \in \bN}$ we have at the limit point $\y$ that $\nabla F(\bar \x)^* \y = 0$ and $\|\y\|=1$, which is a contradiction.
 
In view of the chain rule from~\cite[Theorem 10.6]{RoWe98}, we have for all $\x \in V$ that $\partial f(\x) = \nabla F(\x)^* \partial g(F(\x))$. This means for $\x\in V$, it holds that for any $\q \in \partial f(\x)$ there is some $u\in \partial g(F(\x))$ such that $\q = \nabla F(\x)^* u$. Fix $\bar \q \in \partial f(\bar \x)$. We want to show that there exist $\epsilon > 0$ and $\eta > 0$ such that for any $\x$ with $\|\x - \bar \x\| < \epsilon$ and $u\in \partial g(F(\x))$ with the property $\|\q - \bar\q\|=\|\nabla F(\x)^* u - \bar \q\| < \epsilon$ we have that $\|u\| < \eta$.
Since $g$ is convex it is locally Lipschitz on $\intr(\dom g)$. This means whenever $F(\bar \x) \in \intr (\dom g)$ there is $\epsilon >0$ sufficiently small such that due to continuity of $F$ we have $F(\x) \in \intr(\dom g)$ near $F(\bar \x)$ and there is some finite $\eta > 0$ such that $\|u\| < \eta$ for any $u \in \partial g(F(\x))$.
Now assume $F(\bar \x) \in \bdry(\dom g)$ and suppose that there exist sequences $\x^\nu \to \bar \x$ and \mynewline$u^\nu \in \partial g(F(\x^\nu))$ with $\nabla F(\x^\nu)^* u^\nu \to \bar \q$ and $\|u^\nu\| \to \infty$.
For a decomposition $u^\nu=u_0^\nu + u_i^\nu$ with $u_0^\nu\in\ker \nabla F(x^\nu)^*$ and $u_i^\nu\in\ran\nabla F(x^\nu)$, this yields $\|u_0^\nu\|\to\infty$. Through \cite[Proposition 8.12]{RoWe98} $u^\nu$ is in particular a regular subgradient of $g$ at $F(\x^\nu)$. Obviously, by possibly going to a subsequence, $u^\nu/\|u^\nu\|$ converges to a point on the unit circle, which, by definition, belongs to the horizon subgradient and, by \cite[Proposition 8.12]{RoWe98}, to $N_{\cl(\dom g)}(F(\bar x))$. Moreover, this point lies in $\ker\nabla F(\bar x)^*$, since $u_i^\nu/\|u^\nu\|\to 0$. This is a contradiction to the constraint qualification.
Let $\|\x-\bar \x\|<\epsilon$, $\|\x'-\bar \x\|<\epsilon$ and $\nabla F(\x)^* u = \q\in \partial f(\x)$ with $\|\q-\bar \q\|<\eps$ for some $u\in \partial g(F(\x))$. Due to the argument above we have $\|u\| \leq \eta$ and therefore also $\|u\|_1 \leq \gamma$ for some $\gamma>0$. Then, since $F$ is component-wise $L$-smad, thanks to Lemma~\ref{lem:full_descent}, we can make the following computation. We have for some $r\geq \gamma L$:
\begin{align*}
  f(\x') - f(\x) &= g(F(\x')) - g(F(\x)) \\
  &\geq \scal{u}{F(\x')-F(\x)}\\
  &\geq \scal{u}{\nabla F(\x)(\x'-\x)} -\sum_{i=1}^n |u_i| L \D[\phi](\x',\x) \\
  &\geq \scal{u}{\nabla F(\x)(\x'-\x)} - \gamma L \D[\phi](\x',\x) \\
  &\geq \scal{\nabla F(\x)^*u}{\x'-\x} - r\D[\phi](\x',\x) \\
  &= \scal{\q}{\x'-\x} - r \D[\phi](\x',\x),
\end{align*}
which shows that $f$ is prox-regular at $\bar \x$ for $\bar \q$ relative to $\phi$.
\qed
\end{proof}

\begin{remark}
Note that the estimate in the proof also holds when each component function $F_i$ is $L$-smad relative to a potentially different $\phi_i$.
In addition, it should be noted that when $g$ is globally Lipschitz, which means in particular that $\dom g=\bR^n$, and $F$ is $L$-smad relative to $\phi$ on $V=\intr(\dom \phi)$, the composition $f=g \circ F$ is even relatively hypoconvex, i.e. $f+r\phi$ is convex on $\intr(\dom \phi)$ for some $r>0$ sufficiently large.
\end{remark}

Amenable functions whose representation $g\circ F$ involves a diffeomorphism $F$ have rich properties: As the following proposition shows, even for a prox-regular (outer) function $g$, the composition is also prox-regular.
\begin{proposition} \label{prop:amenable_nabla_phi}
Let $\map{f}{\bR^m}{\exR}$ be finite at $\bar\x$. Let $V \subset \bR^m$ be an open neighborhood of $\bar \x$ on which $f$ can be represented in the form $f=g\circ F$ for a $\mathcal{C}^1$ mapping $\map{F}{V}{\bR^m}$ and a function $\map{g}{\bR^m}{\exR}$. If $g$ is prox-regular at $F(\bar \x)$ for $\bar u \in \partial g(F(\bar \x))$, $\nabla F(\bar \x)$ is nonsingular and $\nabla F$ is Lipschitz on $V$, then $f$ is prox-regular at $\bar \x$ for $\bar \q = \nabla F(\bar \x)^{*} \bar u$.
\end{proposition}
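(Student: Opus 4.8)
The plan is to transport the prox-regularity inequality for $g$ back through the $\mathcal{C}^1$ change of variables $F$. First I would use that $\nabla F(\bar{\x})$ is nonsingular together with the continuity of $\nabla F$ to find a convex neighborhood $V' \subseteq V$ of $\bar{\x}$ on which $\nabla F(\x)$ remains nonsingular, the map $\x \mapsto (\nabla F(\x)^*)^{-1}$ is continuous, $F$ is Lipschitz with some constant $M$, and $\nabla F$ is Lipschitz with the given constant $\ell$. Nonsingularity of $\nabla F$ makes the constraint qualification in the chain rule \cite[Theorem 10.6]{RoWe98} automatic, so $\partial f(\x) \subseteq \nabla F(\x)^* \partial g(F(\x))$ on $V'$; in particular each $\q \in \partial f(\x)$ can be written uniquely as $\q = \nabla F(\x)^* u$ with $u = (\nabla F(\x)^*)^{-1}\q \in \partial g(F(\x))$. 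Local lower semicontinuity of $f$ at $\bar{\x}$ is inherited from that of $g$ at $F(\bar{\x})$ via continuity of $F$.

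Let $\epsilon_g > 0$ and $r \geq 0$ be the prox-regularity constants of $g$ at $F(\bar{\x})$ for $\bar u$. Given the data occurring in the definition of prox-regularity of $f$ --- points $\x, \x'$ near $\bar{\x}$ and $\q \in \partial f(\x)$ with $\|\q - \bar{\q}\|$ and $f(\x) - f(\bar{\x})$ small --- I recover $u := (\nabla F(\x)^*)^{-1}\q$. The \emph{key step} is to verify that $(F(\x), F(\x'), u)$ is admissible data for $g$'s inequality. Since $\bar{\q} = \nabla F(\bar{\x})^*\bar u$ and $(\nabla F(\cdot)^*)^{-1}$ is continuous, $u \to \bar u$ as $(\x, \q) \to (\bar{\x}, \bar{\q})$, so $\|u - \bar u\| < \epsilon_g$ after shrinking $V'$; moreover $g(F(\x)) - g(F(\bar{\x})) = f(\x) - f(\bar{\x}) < \epsilon_g$ and $\|F(\x) - F(\bar{\x})\|, \|F(\x') - F(\bar{\x})\| < \epsilon_g$ by continuity of $F$. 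The same shrinking yields a uniform bound $\|u\| \leq \eta$.

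With this in place, applying $g$'s inequality at $F(\x), F(\x')$ with subgradient $u$ and using $f = g\circ F$ gives
\begin{align*}
f(\x') \geq f(\x) + \langle u, F(\x') - F(\x)\rangle - \tfrac{r}{2}\|F(\x') - F(\x)\|^2.
\end{align*}
I then pass to the $\x$-variables by two second-order estimates coming from the Lipschitz continuity of $\nabla F$: the bound $\|F(\x') - F(\x) - \nabla F(\x)(\x' - \x)\| \leq \tfrac{\ell}{2}\|\x' - \x\|^2$, combined with the identity $\langle u, \nabla F(\x)(\x' - \x)\rangle = \langle \q, \x' - \x\rangle$, and the Lipschitz bound $\|F(\x') - F(\x)\| \leq M\|\x' - \x\|$. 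Substituting and using $\|u\| \leq \eta$ absorbs all quadratic remainders into a single modulus:
\begin{align*}
f(\x') \geq f(\x) + \langle \q, \x' - \x\rangle - \tfrac{r'}{2}\|\x' - \x\|^2, \qquad r' := \eta\ell + rM^2,
\end{align*}
which is precisely classical prox-regularity of $f$ at $\bar{\x}$ for $\bar{\q}$.

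The principal obstacle I anticipate is the bookkeeping of neighborhoods: ensuring that admissible $f$-attentive data $(\x, \q)$ produce admissible $g$-attentive data $(F(\x), u)$. What makes this go through is the nonsingularity of $\nabla F(\bar{\x})$, which turns $\q = \nabla F(\x)^* u$ into a continuous bijection between the two subgradients, so that smallness of $\|\q - \bar{\q}\|$ forces smallness of $\|u - \bar u\|$ and a uniform bound on $\|u\|$; without this assumption the correspondence --- and hence the whole transfer --- would break down.
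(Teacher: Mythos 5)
Your proposal is correct and follows essentially the same route as the paper's proof: transfer the prox-regularity inequality of $g$ through the chain rule (with the constraint qualification automatic by nonsingularity of $\nabla F(\bar\x)$), control the first-order remainder $\langle u, F(\x')-F(\x)-\nabla F(\x)(\x'-\x)\rangle$ via Lipschitz continuity of $\nabla F$, absorb $\|F(\x')-F(\x)\|^2$ via local Lipschitzness of $F$, and use invertibility of $\nabla F(\x)^*$ near $\bar\x$ to ensure that $f$-attentive data $(\x,\q)$ yield $g$-attentive data $(F(\x),u)$ with $u$ near $\bar u$. The only cosmetic difference is that the paper makes the last point quantitative via a lower bound involving $\sigma_{\min}(\nabla F(\x))$, whereas you invoke continuity of $(\nabla F(\cdot)^*)^{-1}$, which amounts to the same thing.
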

\begin{proof}
Since $g$ is prox-regular at $F(\bar \x)$ for $\bar u \in \partial g(F(\bar \x))$ and due to the continuity of $F$, there exists a constant $r'> 0$ such that for any $\epsilon'> 0$ sufficiently small we have:
\begin{align*}
 g(F(\x'))\geq g(F(\x)) + \scal{ u}{F(\x')-F(\x)} - \frac {r'} 2\|F(\x') - F(\x)\|^2
\end{align*}
for $\|F(\bar \x) - F(\x')\| < \epsilon'$, $\|F(\bar \x) - F(\x)\| < \epsilon'$, $\|\bar u-u\| < \epsilon'$ with $u \in \partial g(F(\x))$ and $| g(F(\x))- g(F(\bar\x))| < \epsilon'$.
Since $F$ is locally Lipschitz, there exists $r''$ such that 
\begin{align*}
    \frac {r'} 2\|F(\x') - F(\x)\|^2 \leq \frac{r''} 2 \|\x' - \x\|^2.
\end{align*}

For the inner product, we use the fact that the component functions $F_i$ satisfy
\begin{align*}
      F_i(\x') - F_i(\x) = \int_{0}^{1}{ \langle \nabla F_i(\x+t(\x'-\x)), \x' - \x \rangle}{\,\mathrm{d}t} 
\end{align*}
and since $\nabla F$ is Lipschitz we have for some $L>0$
\begin{align*}
    \scal{u}{F(\x') - F(\x)} &=  \scal{u}{\nabla F(\x)(\x' - \x)} \\
    &\qquad+ \int_{0}^{1}{\big\langle u, \big(\nabla F(\x+t(\x'-\x)) - \nabla F(\x)\big) (\x' - \x)\big\rangle}{\,\mathrm{d}t} \\
    &\leq \scal{\nabla F(\x)^* u}{\x' - \x} \\
    &\qquad+ \|u\| \int_{0}^{1}{\|\nabla F(\x+t(\x'-\x)) - \nabla F(\x)\|\cdot \|\x' - \x\|}{\,\mathrm{d}t} \\
    &\leq \scal{\nabla F(\x)^* u}{\x' - \x} + \|u\|\cdot\|\x'-\x\|^2 \frac{L}{2}. 
\end{align*}
  Combining the inequalities we obtain, since $u$ is bounded around $\bar u$, that for some $r>0$ we have
  \begin{align} \label{eq:ineq_pr}
 g(F(\x'))\geq g(F(\x)) + \scal{\nabla F(\x)^* u}{\x' - \x} - \frac {r} 2\|\x' - \x\|^2,
  \end{align}
 whenever $\|F(\bar \x) - F(\x')\| < \epsilon'$, $\|F(\bar \x) - F(\x)\| < \epsilon'$, $\|\bar u-u\| < \epsilon'$ with $u \in \partial g(F(\x))$ and $| g(F(\x))- g(F(\bar\x))| < \epsilon'$. 
 
 As $F$ is $\mathcal{C}^1$ with $\nabla F(\bar \x)$ nonsingular, in view of the inverse function theorem, we know that $F$ is invertible on a small neighborhood of $\bar \x$. 

  In view of \cite[Exercise 10.7]{RoWe98}, the chain rule holds on a neighborhood of $\bar \x$, i.e. we have $\nabla F(\x)^*\partial g(F(\x)) = \partial f(\x)$ when $\x$ near $\bar \x$. This means for such $\x$ and any $\q \in \partial f(\x)$ there exists $u \in \partial g(F(\x))$ such that $\q = \nabla F(\x)^*u$.

Let $\q \in \partial f(\x)$ near $\bar \q$ and $\x$ near $\bar \x$. Let $u \in \partial g(F(\x))$ such that \mynewline $\q = \nabla F(\x)^*u$. In view of the Lipschitz continuity of $\nabla F$, we can make the following computation:
\begin{align*}
\sigma_{\min}(\nabla F(\x))\|u - \bar u\| &\leq \|\nabla F(\x)^* u - \nabla F(\x)^*\bar u\| \\
&\leq\|\nabla F(\x)^* u - \nabla F(\bar \x)^*\bar u \|+ \|\nabla F(\bar \x)^*\bar u  - \nabla F(\x)^*\bar u\| \\
&\leq\|\q - \bar \q \|+\|\bar u\| L \|\bar \x  - \x\|.
\end{align*}
Since $\nabla F(\x)$ is invertible we know that the smallest singular value $\sigma_{\min}(\nabla F(\x))$ of $\nabla F(\x)$ is positive. Then dividing the inequality by $\sigma_{\min}(\nabla F(\x))$ shows that for $\q \in \partial f(\x)$ near $\bar \q$ and $\x$ near $\bar \x$ we guarantee $u$ near $\bar u$.

Overall, this means we can find $\epsilon>0$ sufficiently small, such that whenever $\|\bar \x - \x\| < \epsilon$, $\|\bar \x - \x'\| < \epsilon$ and $\|\bar \q-\q\| < \epsilon$, $\q \in \partial f(\x)$ and $|f(\x) - f(\bar \x)| < \epsilon$, we guarantee via the continuity of $F$ that $\|F(\bar \x) - F(\x')\| < \epsilon'$, $\|F(\bar \x) - F(\x)\| < \epsilon'$, $| g(F(\x))- g(F(\bar\x))| < \epsilon'$ and $\|\bar u-u\| < \epsilon'$.
Then, in view of \eqref{eq:ineq_pr}, we have:
  $$
 f(\x')\geq f(\x) + \scal{\q}{\x' - \x} - \frac {r} 2\|\x' - \x\|^2.
  $$
  Since $g$ is in particular finite and locally lsc at $F(\bar \x)$, $f$ is finite and locally lsc at $\bar \x$.
We may conclude that $f$ is prox-regular at $\bar \x$ for $\bar \q$.
\qed
\end{proof}

A particularly interesting choice for $F$ in context of the right Bregman proximal mapping is $F=\nabla \phi^*$, for a Legendre function $\phi$:
\begin{corollary} \label{cor:prox_regularity_right}
Let $\phi \in \Gamma_0(\bR^m)$ be Legendre and $\map{f}{\bR^m}{\exR}$ be finite at $\bar \x$. Let $\phi \in \Gamma_0(\bR^m)$ be Legendre, very strictly convex and $\nabla^2 \phi$ locally Lipschitz at $\bar \x$. Then $f$ is prox-regular at $\bar \x$ for $\bar \q \in \partial f(\bar \x)$ if and only if $f \circ \nabla \phi^*$ is prox-regular at $\nabla \phi(\bar \x)$ for $\bar u = \nabla^2\phi^*(\nabla \phi(\bar \x))\bar \q \in \partial(f\circ \nabla \phi^*)(\nabla \phi(\bar \x))$.
 \end{corollary}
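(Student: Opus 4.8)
The plan is to derive both implications of the equivalence directly from Proposition~\ref{prop:amenable_nabla_phi}, exploiting that $\nabla\phi$ and $\nabla\phi^*$ are mutually inverse $\mathcal{C}^1$ diffeomorphisms between $\intr(\dom\phi)$ and $\intr(\dom\phi^*)$, so that $f$ and $f\circ\nabla\phi^*$ are each the composition of the other with such a diffeomorphism. First I would record the structural facts supplied by very strict convexity: by Lemma~\ref{lem:inverse_hessian}, $\phi^*$ is very strictly convex as well, hence $\nabla\phi$ and $\nabla\phi^*$ are both $\mathcal{C}^1$, and for the conjugate pair $\bar\x$, $\nabla\phi(\bar\x)$ the Hessians $\nabla^2\phi(\bar\x)$ and $\nabla^2\phi^*(\nabla\phi(\bar\x))$ are positive definite and inverse to each other. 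In particular both Jacobians are nonsingular at the relevant base points, which supplies the nonsingularity hypothesis of Proposition~\ref{prop:amenable_nabla_phi}.

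For the forward direction I would write $f\circ\nabla\phi^*$ as a composition $g\circ F$ with outer function $g=f$ and inner map $F=\nabla\phi^*$, based at $\bar z:=\nabla\phi(\bar\x)$, noting $F(\bar z)=\nabla\phi^*(\nabla\phi(\bar\x))=\bar\x$ by Lemma~\ref{lem:legendre_props}. Proposition~\ref{prop:amenable_nabla_phi} then yields that $f\circ\nabla\phi^*$ is prox-regular at $\bar z$ for $\nabla F(\bar z)^*\bar\q=\nabla^2\phi^*(\nabla\phi(\bar\x))\bar\q=\bar u$, using symmetry of the Hessian. The backward direction is symmetric: since $\nabla\phi$ inverts $\nabla\phi^*$, one has $f=(f\circ\nabla\phi^*)\circ\nabla\phi$, so I would apply the proposition with outer function $g=f\circ\nabla\phi^*$, inner map $F=\nabla\phi$ and base point $\bar\x$, obtaining prox-regularity of $f$ at $\bar\x$ for $\nabla^2\phi(\bar\x)\bar u$. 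Invoking Lemma~\ref{lem:inverse_hessian} once more, $\nabla^2\phi(\bar\x)\bar u=\nabla^2\phi(\bar\x)\nabla^2\phi^*(\nabla\phi(\bar\x))\bar\q=\bar\q$, which closes the equivalence. The membership $\bar u\in\partial(f\circ\nabla\phi^*)(\nabla\phi(\bar\x))$ follows from the diffeomorphic chain rule \cite[Exercise 10.7]{RoWe98} that already underlies Proposition~\ref{prop:amenable_nabla_phi}.

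The main obstacle is verifying the Lipschitz hypotheses on the Jacobians demanded by Proposition~\ref{prop:amenable_nabla_phi}. In the backward direction the inner map is $F=\nabla\phi$ and $\nabla F=\nabla^2\phi$ is locally Lipschitz at $\bar\x$ by assumption, so nothing extra is needed. In the forward direction, however, the inner map is $\nabla\phi^*$, and I must show that $\nabla^2\phi^*$ is locally Lipschitz at $\nabla\phi(\bar\x)$, which is not immediate from the hypothesis on $\phi$. I would use the identity $\nabla^2\phi^*(\z)=\big(\nabla^2\phi(\nabla\phi^*(\z))\big)^{-1}$, valid near $\nabla\phi(\bar\x)$ by Lemma~\ref{lem:inverse_hessian}, and combine three facts: $\nabla\phi^*$ is locally Lipschitz, being $\mathcal{C}^1$; $\nabla^2\phi$ is locally Lipschitz at $\bar\x=\nabla\phi^*(\nabla\phi(\bar\x))$ by hypothesis; and matrix inversion is locally Lipschitz on a neighborhood of the invertible matrix $\nabla^2\phi(\bar\x)$. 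Composing these three locally Lipschitz maps yields local Lipschitz continuity of $\nabla^2\phi^*$ at $\nabla\phi(\bar\x)$, completing the verification and hence the proof.
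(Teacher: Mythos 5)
Your proof is correct and follows essentially the same route as the paper: both directions are obtained by applying Proposition~\ref{prop:amenable_nabla_phi} to $f\circ\nabla\phi^*$ and to $f=(f\circ\nabla\phi^*)\circ\nabla\phi$, and the only nontrivial hypothesis --- local Lipschitz continuity of $\nabla^2\phi^*$ near $\nabla\phi(\bar\x)$ --- is verified exactly as in the paper, via the identity $\nabla^2\phi^*(\z)=\bigl(\nabla^2\phi(\nabla\phi^*(\z))\bigr)^{-1}$ from Lemma~\ref{lem:inverse_hessian} and the composition of locally Lipschitz maps (matrix inversion, $\nabla^2\phi$, $\nabla\phi^*$). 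Your explicit check that the subgradients transform consistently, $\nabla^2\phi(\bar\x)\bar u=\bar\q$, is a welcome detail the paper leaves implicit.
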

 \begin{proof}
 Since $\phi$ is very strictly convex, we know that $\nabla^2 \phi(\x)$ is positive definite for $\x \in \intr(\dom \phi)$ and therefore nonsingular. In view of Lemma~\ref{lem:inverse_hessian} we know that $\nabla^2 \phi^*(\x) = (\nabla^2 \phi(\nabla \phi^*(\x)))^{-1}$, which is locally Lipschitz as the composition of the inverse matrix map, $\nabla^2 \phi$ and $\nabla \phi^*$, all of which are locally Lipschitz, cf. \cite[Section 15; Exercise 22]{harville2001matrix}.
 The conclusion then follows from applying Proposition~\ref{prop:amenable_nabla_phi} to $f \circ \nabla \phi^*$ resp. $f=(f \circ \nabla \phi^*) \circ \nabla \phi$.
 \qed
 \end{proof}
  In particular, combining Lemma~\ref{lem:right_prox}, Theorem~\ref{thm:prox_regularity_equiv} and the Corollary~\ref{cor:prox_regularity_right} above we may guarantee the local single-valuedness of the right Bregman proximal mapping $\rprox{\phi}{f}{\lambda}$ of $f$ under prox-regularity of $f$ and very strict convexity of $\phi$.

The class of relatively amenable functions is a wide source of examples for relatively prox-regular functions:
\begin{example} \label{ex:amenable}
Choose $\map{f}{\bR^2}{\exR}$ with $f(\x_1, \x_2) = g(F(\x_1, \x_2))$ for $\map{g}{\bR}{\exR}$ with $g := \delta_{\bR_{\leq0}}$ and $\map{F}{\bR^2}{\bR}$ with $F(\x_1, \x_2) = 2\x_1^2 - 3|\x_1|^{1.1} - \x_2$. Choose $\map{\phi}{\bR^2}{\bR}$ with $\phi(\x_1, \x_2)= \x_1^2 + |\x_1|^{1.1} + \x_2^2$. Then, clearly, $F$ is $L$-smad relative to $\phi$ for $L=3$. Since $\nabla F(0)= (0, -1)$ is full rank, $f$ is relatively amenable at $0$ and in view of Proposition~\ref{prop:amenable_implies_pr}, relatively prox-regular at $0$. Note that $f$ is the indicator function of the epigraph of the nonconvex function \mynewline $h(\x) = 2\x^2 - 3|\x|^{1.1}$ and therefore neither hypoconvex relative to $\phi$ nor classically prox-regular at $0$.
\end{example}
The above example is illustrated in Figure~\ref{fig:bergman_proj}.

\subsection{Gradient Formulas for Bregman--Moreau Envelopes} \label{sec:gradient_formulas}
So far we know that relative prox-regularity provides us with a sufficient condition for the local single-valuedness of the left and right Bregman proximal mapping. This in turn allows us to guarantee that the Bregman envelope functions are locally $\mathcal{C}^1$ providing an explicit formula for their gradients, which involves the corresponding Bregman proximal mappings. The formulas for both the left and the right envelope have been proven previously in the convex setting \cite[Proposition 3.12]{bauschke2006joint} and for the left envelope in a more general relatively hypoconvex setting \cite[Corollary 3.1]{kan2012moreau}.

The following proposition provides us with an explicit formula for the gradient of the composition $\lenv{\phi}{f}{\lambda} \circ \nabla \phi^*$. The gradient formulas of both the left and right envelope are direct consequences of this underlying formula.
\begin{proposition} \label{prop:gradient_left_phi}
Let $\phi \in \Gamma_0(\bR^m)$ be Legendre and super-coercive and the function $\map{f}{\bR^m}{\exR}$ be proper lsc and prox-bounded with threshold $\lambda_f$. 
Let $f$ be relatively prox-regular at $\bar{\x} \in \intr (\dom \phi) \cap \dom f$ for $\bar{\q}\in\partial f(\bar{\x})$. 

If $\lambda \in \left]0,\lambda_f\right[$ is sufficiently small, we have that $\lenv{\phi}{f}{\lambda} \circ \nabla \phi^*$ is $\mathcal{C}^1$ around 
\begin{align*}
\bar{\y} := \nabla\phi(\bar{\x}) + \lambda \bar{\q},
\end{align*}
with 
\begin{equation} \label{eq:gradient_formula_left_phi}
\nabla \big(\lenv{\phi}{f}{\lambda} \circ \nabla \phi^* \big)(\y)=\frac{1}{\lambda}\big(\nabla \phi^*(\y)-\lprox{\phi}{f}{\lambda}(\nabla\phi^*(\y)) \big),
\end{equation}
and $\y$ sufficiently close to $\bar{\y}$.
If, furthermore, $\phi$ is very strictly convex, then $\nabla (\lenv{\phi}{f}{\lambda} \circ \nabla \phi^*)$ is Lipschitz continuous on a neighborhood of $\bar{\y}$.
\end{proposition}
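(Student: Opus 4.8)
The plan is to convert the composite envelope into a difference of two convex conjugates, which reduces the entire statement to the differentiability of a single convex function, and then to read the gradient off from the prox. First I would record the pointwise identity $\D[\phi](\x,\nabla\phi^*(\y)) = \phi(\x) - \iprod{\y}{\x} + \phi^*(\y)$, valid for $\y\in\intr(\dom\phi^*)$, which follows from the Fenchel equality $\phi(\nabla\phi^*(\y)) + \phi^*(\y) = \iprod{\y}{\nabla\phi^*(\y)}$. Substituting this into the definition of the left envelope and taking the infimum over $\x$ yields
$$
\lenv{\phi}{f}{\lambda}(\nabla\phi^*(\y)) = \frac1\lambda\big(\phi^*(\y) - (\phi+\lambda f)^*(\y)\big).
$$
Since $\phi$ is super-coercive, Lemma~\ref{lem:legendre_props}(iv) gives $\dom\phi^*=\bR^m$, so $\phi^*$ is $\mathcal{C}^1$ on all of $\bR^m$ by Lemma~\ref{lem:legendre_props}(iii). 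Writing $K:=(\phi+\lambda f)^*$, convex and lsc as a conjugate, the claim reduces to showing that $K$ is $\mathcal{C}^1$ near $\bar\y$ with $\nabla K(\y) = \lprox{\phi}{f}{\lambda}(\nabla\phi^*(\y))$; differentiating the displayed identity then gives~\eqref{eq:gradient_formula_left_phi}.

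Next I would identify the prox as a subgradient selection of $K$. A short computation shows
$$
\lprox{\phi}{f}{\lambda}(\nabla\phi^*(\y)) = \argmin_\x\ (\phi+\lambda f)(\x) - \iprod{\y}{\x} = \argmax_\x\ \iprod{\y}{\x} - (\phi+\lambda f)(\x),
$$
so the prox point attains the supremum defining $K(\y)$. The Fenchel--Young equality then gives, for $P(\y):=\lprox{\phi}{f}{\lambda}(\nabla\phi^*(\y))$, the inequality $K(\y')\geq K(\y) + \iprod{P(\y)}{\y'-\y}$ for all $\y'$, i.e. $P(\y)\in\partial K(\y)$. By Theorem~\ref{thm:prox_regularity_equiv}(iii), applied at $\nabla\phi^*(\bar\y)=\nabla\phi^*(\nabla\phi(\bar\x)+\lambda\bar\q)$, the map $\lprox{\phi}{f}{\lambda}$ is single-valued on a neighborhood of that point for $\lambda$ small; combined with the continuity of $\nabla\phi^*$ (Lemma~\ref{lem:legendre_props}(iii)) and the outer-semicontinuity with boundedness of Lemma~\ref{lem:continuity_left}(iii), the selection $\y\mapsto P(\y)$ is single-valued and continuous on a neighborhood of $\bar\y$.

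To close the $\mathcal{C}^1$ claim I would invoke the fact that a proper lsc convex function admitting a continuous single-valued selection of its subdifferential on an open set is differentiable there with that selection as gradient; equivalently, a convex function is differentiable exactly where its subdifferential is a singleton. The continuous selection $P$ is monotone (add the two subgradient inequalities at $\y$ and $\y'$), and this rules out a kink of $K$: restricting $K$ to a line through any putative nonsmooth point, the one-sided derivatives would strictly straddle the continuous value $\iprod{P(\cdot)}{d}$, a contradiction. Hence $\partial K(\y)=\{P(\y)\}$ near $\bar\y$, so $K$ is $\mathcal{C}^1$ with $\nabla K = P$. I expect this convex-analytic step to be the main obstacle: although $P(\y)$ always lies in $\partial K(\y)$, the conjugate $K=(\phi+\lambda f)^*$ only sees the closed convex hull of $\phi+\lambda f$, so a priori $\partial K(\y)$ could be strictly larger than the single-valued prox-argmax, and it is precisely the continuity of the selection that excludes the spurious subgradients introduced by convexification.

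For the final assertion, assume $\phi$ is very strictly convex. Then Lemma~\ref{lem:inverse_hessian} ensures $\phi^*$ is very strictly convex and $\mathcal{C}^2$, so $\nabla\phi^*$ is locally Lipschitz near $\bar\y$, while Corollary~\ref{cor:prox_lipschitz} gives that $\lprox{\phi}{f}{\lambda}$ is Lipschitz on a neighborhood of $\nabla\phi^*(\bar\y)$. Consequently the gradient
$$
\nabla\big(\lenv{\phi}{f}{\lambda}\circ\nabla\phi^*\big)(\y)=\frac1\lambda\big(\nabla\phi^*(\y) - \lprox{\phi}{f}{\lambda}(\nabla\phi^*(\y))\big)
$$
is a difference of locally Lipschitz maps, the composition $\lprox{\phi}{f}{\lambda}\circ\nabla\phi^*$ being Lipschitz as a composition of Lipschitz maps, and is therefore locally Lipschitz on a neighborhood of $\bar\y$.
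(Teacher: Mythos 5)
Your proof is correct, but it follows a genuinely different route from the paper's. The paper writes $-\lenv{\phi}{f}{\lambda}\circ\nabla\phi^*$ locally as a parametric max $\max_{\x\in Z}h(\x,\cdot)$ over a compact set, recognizes it as lower-$\mathcal{C}^1$, and invokes the max-function subdifferential formula \cite[Theorem 10.31]{RoWe98} to get $\partial(-\lenv{\phi}{f}{\lambda}\circ\nabla\phi^*)(\y)=-\tfrac1\lambda\left(\nabla\phi^*(\y)-\con\lprox{\phi}{f}{\lambda}(\nabla\phi^*(\y))\right)$; single-valuedness of the prox (Theorem~\ref{thm:prox_regularity_equiv}(iii)) then makes this a singleton and \cite[Corollary 9.19]{RoWe98} yields the $\mathcal{C}^1$ conclusion. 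You instead pass through the conjugate identity $\lenv{\phi}{f}{\lambda}\circ\nabla\phi^*=\tfrac1\lambda\left(\phi^*-(\phi+\lambda f)^*\right)$ --- which the paper itself records only later, in Proposition~\ref{prop:lsmad_env}(i) --- and reduce everything to differentiability of the convex function $K=(\phi+\lambda f)^*$. Both arguments lean on the same two inputs (Theorem~\ref{thm:prox_regularity_equiv}(iii) for single-valuedness, Lemma~\ref{lem:continuity_left}(iii) for continuity of the selection); the difference is purely in how the envelope's differential is extracted. Your route is more elementary (pure convex analysis, no lower-$\mathcal{C}^1$ machinery), and you correctly identify and close the one real danger of the convexification, namely that $\partial K(\y)$ could exceed the prox-argmax: your sandwich $\langle P(\y_0),d\rangle\le t^{-1}(K(\y_0+td)-K(\y_0))\le\langle P(\y_0+td),d\rangle$ together with continuity of $P$ does rule this out, since a finite convex function with linear directional derivatives is differentiable and a differentiable convex function with continuous gradient is $\mathcal{C}^1$. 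Two small points you leave implicit but which do hold: $K$ is finite near $\bar\y$ (indeed on all of $\bR^m$, since $\lprox{\phi}{f}{\lambda}(\nabla\phi^*(\y))\ne\emptyset$ by Lemma~\ref{lem:continuity_left}(i) forces $\partial K(\y)\ne\emptyset$, hence $K(\y)<\infty$), which is needed for the convex-analytic differentiability criterion; and the "strictly straddle'' phrasing is looser than the sandwich argument that actually does the work. What the paper's approach buys in exchange is a formula for the full subdifferential as a convex hull over the (possibly multivalued) argmax, valid even before single-valuedness is imposed; what yours buys is the explicit DC decomposition of the envelope, which dovetails with Proposition~\ref{prop:lsmad_env}. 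The final Lipschitz assertion is handled identically in both.
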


\begin{proof}
Let $\lambda \in \left]0,\lambda_f\right[$. In view of the relative prox-boundedness of $f$, we know due to the continuity properties summarized in Lemma~\ref{lem:continuity_left}, the super-coercivity of $\phi$ and the continuity of $\nabla \phi^*$ and the fact that $\dom \phi^* = \bR^m$, cf. Lemma~\ref{lem:legendre_props}, that for any $\y \in \bR^m$ there is a neighborhood $V \subset \bR^m$ of $\y$, sufficiently small along with a compact set $Z \subset \dom \phi$ such that for any $\y' \in V$ we can write $-(\lenv{\phi}{f}{\lambda} \circ \nabla \phi^*)(\y')=\max_{\x \in Z} h(\x,\y')$, for
$$
h(\x,\y'):=-f(\x) -\frac{1}{\lambda}\D[\phi](\x,\nabla \phi^*(\y')) = -f(\x) -\frac{1}{\lambda}\big(\phi(\x) + \phi^*(\y') - \langle \y', \x \rangle\big) 
$$
and $\lprox{\phi}{f}{\lambda}(\nabla\phi^*(\y')) \subset Z$. Clearly, $h(\x,\cdot)$ is $\mathcal{C}^1$ as $\phi^*$ is $\mathcal{C}^1$ on $\dom \phi^* = \bR^m$ with $h(\x, \y)$ and $\nabla_\y h(\x,\y)=-(1/\lambda)(\nabla \phi^*(\y)-\x)$ both depending continuously on $(\x,\y) \in Z \times V$. Hence $h$ is lower-$\mathcal{C}^1$, cf. \cite[Definition 10.29]{RoWe98}, and therefore we can invoke \cite[Theorem 10.31]{RoWe98}, to obtain that
\begin{align} \label{eq:neg_subdiff_left_phi}
\partial\big(-\lenv{\phi}{f}{\lambda} \circ \nabla \phi^* \big)(\y)  &= \con \big\{ \nabla_\y h(\x, \y) : \x \in \lprox{\phi}{f}{\lambda}(\nabla\phi^*(\y)) \big\} \notag\\
&= -\frac1\lambda\Big( \nabla \phi^*(\y)-{\con} \big(\lprox{\phi}{f}{\lambda}(\nabla \phi^*(\y))\big) \Big).
\end{align}
Due to the assumptions we can invoke Theorem~\ref{thm:prox_regularity_equiv}(iii) and assert that for $\lambda \in \left]0,\lambda_f\right[$ sufficiently small $\lprox{\phi}{f}{\lambda}\circ \nabla \phi^*$ is singled-valued at $\y$ near \mynewline$\bar\y = \nabla\phi(\bar{\x}) + \lambda \bar{\q}$.
In view of Equation~\eqref{eq:neg_subdiff_left_phi}, this means that $\partial(-\lenv{\phi}{f}{\lambda} \circ \nabla \phi^*)$ is single-valued around $\bar{\y}$. Through \cite[Corollary 9.19]{RoWe98} we obtain that \mynewline $-\lenv{\phi}{f}{\lambda} \circ \nabla \phi^*$ is $\mathcal{C}^1$ around $\bar{\y}$ with 
$$
\frac1\lambda\big(\nabla \phi^*(\y)-\lprox{\phi}{f}{\lambda}(\nabla\phi^*(\y))\big) = \nabla \big(\lenv{\phi}{f}{\lambda} \circ\nabla \phi^*\big)(\y).
$$
If, furthermore, $\phi$ is very strictly convex, we know due to Corollary~\ref{cor:prox_lipschitz} that $\lprox{\phi}{f}{\lambda}$ is locally Lipschitz at $\nabla\phi^*(\bar{\y})$. Then $\nabla (\lenv{\phi}{f}{\lambda} \circ \nabla \phi^*)$ is locally Lipschitz at $\bar{\y}$ as a composition resp. sum of locally Lipschitz maps.
\qed
\end{proof}

\begin{corollary} \label{cor:gradient_env_left}
Let $\phi \in \Gamma_0(\bR^m)$ be Legendre, super-coercive and $\mathcal{C}^2$ on $\intr(\dom \phi)$ and $\map{f}{\bR^m}{\exR}$ be proper lsc and prox-bounded with threshold $\lambda_f$. 
Let $f$ be relatively prox-regular at $\bar{\x} \in \intr (\dom \phi) \cap \dom f$ for $\bar{\q}\in\partial f(\bar{\x})$. If $\lambda \in \left]0,\lambda_f\right[$ is sufficiently small we have that $\lenv{\phi}{f}{\lambda}$ is $\mathcal{C}^1$ around
\begin{align*}
\bar{\y} := \nabla\phi^*(\nabla\phi(\bar{\x}) + \lambda \bar{\q}),
\end{align*}
with 
\begin{equation} \label{eq:gradient_formula}
\nabla \lenv{\phi}{f}{\lambda}(\y)=\frac1\lambda \nabla^2\phi(\y)\big(\y-\lprox{\phi}{f}{\lambda}(\y)\big),
\end{equation}
and $\y$ sufficiently close to $\bar{\y}$.
If, furthermore, $\phi$ is very strictly convex and $\nabla^2 \phi$ is locally Lipschitz on $\intr(\dom \phi)$, then $\nabla \lenv{\phi}{f}{\lambda}$ is Lipschitz continuous on a neighborhood of $\bar{\y}$.
\end{corollary}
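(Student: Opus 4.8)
The plan is to reduce the statement to Proposition~\ref{prop:gradient_left_phi} via the chain rule, exploiting that $\nabla \phi$ and $\nabla \phi^*$ are mutually inverse $\mathcal{C}^1$ diffeomorphisms. First I would set $g := \lenv{\phi}{f}{\lambda} \circ \nabla \phi^*$. Since $\phi$ is super-coercive, Lemma~\ref{lem:legendre_props}(iv) gives $\dom \phi^* = \bR^m$, and Lemma~\ref{lem:legendre_props}(iii) says $\nabla \phi \colon \intr(\dom\phi) \to \bR^m$ is a bijection with inverse $\nabla \phi^*$; as $\phi$ is $\mathcal{C}^2$, the map $\nabla \phi$ is $\mathcal{C}^1$ with Jacobian $\nabla^2\phi$. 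Composing $g$ on the right with $\nabla \phi$ recovers the envelope, since $\nabla \phi^* \circ \nabla \phi = \mathrm{id}$ on $\intr(\dom\phi)$; that is, $\lenv{\phi}{f}{\lambda} = g \circ \nabla \phi$ there.

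Next I would invoke Proposition~\ref{prop:gradient_left_phi}, which under exactly the present base hypotheses gives, for $\lambda \in \left]0,\lambda_f\right[$ sufficiently small, that $g$ is $\mathcal{C}^1$ on a neighborhood of $\nabla\phi(\bar\x) + \lambda\bar\q$ together with the stated formula for $\nabla g$. The base point $\bar\y = \nabla\phi^*(\nabla\phi(\bar\x) + \lambda\bar\q)$ of the corollary lies in the open set $\intr(\dom\phi)$ (it is the image under $\nabla\phi^*$ of a point of $\intr(\dom\phi^*) = \bR^m$), and $\nabla\phi(\bar\y) = \nabla\phi(\bar\x) + \lambda\bar\q$ is precisely the base point of the proposition. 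By continuity of $\nabla\phi$, for $\y$ near $\bar\y$ the point $\nabla\phi(\y)$ stays in the neighborhood where $g$ is $\mathcal{C}^1$, so the chain rule applied to $\lenv{\phi}{f}{\lambda} = g \circ \nabla\phi$, using that the Jacobian of $\nabla\phi$ at $\y$ is the symmetric matrix $\nabla^2\phi(\y)$, yields that $\lenv{\phi}{f}{\lambda}$ is $\mathcal{C}^1$ near $\bar\y$ with
$$\nabla \lenv{\phi}{f}{\lambda}(\y) = \nabla^2\phi(\y)\,\nabla g(\nabla\phi(\y)).$$

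Substituting the formula for $\nabla g$ from Proposition~\ref{prop:gradient_left_phi} at $\z = \nabla\phi(\y)$ and using $\nabla\phi^*(\nabla\phi(\y)) = \y$ collapses the term $\nabla\phi^*(\z) - \lprox{\phi}{f}{\lambda}(\nabla\phi^*(\z))$ to $\y - \lprox{\phi}{f}{\lambda}(\y)$, giving exactly~\eqref{eq:gradient_formula}. For the Lipschitz claim I would note that under very strict convexity Proposition~\ref{prop:gradient_left_phi} additionally provides that $\nabla g$ is locally Lipschitz at $\nabla\phi(\bar\y)$; combined with the local Lipschitzness of $\nabla\phi$ (its Jacobian $\nabla^2\phi$ is continuous, hence locally bounded) and the assumed local Lipschitzness of $\nabla^2\phi$, the map $\y \mapsto \nabla^2\phi(\y)\,\nabla g(\nabla\phi(\y))$ is locally Lipschitz as a product and composition of locally Lipschitz maps.

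The argument is essentially bookkeeping, and I do not anticipate a genuine obstacle. The only points needing care are (i) tracking that the base point $\nabla\phi(\bar\y)$ of the dual-space statement corresponds under $\nabla\phi^*$ to the primal-space base point $\bar\y$, and (ii) the use of the symmetry of the Hessian in the chain-rule transpose; the matching of neighborhoods follows from continuity of $\nabla\phi$ and openness of $\intr(\dom\phi)$.
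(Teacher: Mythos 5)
Your proposal is correct and follows essentially the same route as the paper: decompose $\lenv{\phi}{f}{\lambda}= (\lenv{\phi}{f}{\lambda} \circ \nabla \phi^*) \circ \nabla \phi$, apply the chain rule with Jacobian $\nabla^2\phi(\y)$, substitute the gradient formula from Proposition~\ref{prop:gradient_left_phi}, and obtain the Lipschitz claim from the local Lipschitzness of $\nabla^2\phi$ together with that of $\nabla(\lenv{\phi}{f}{\lambda}\circ\nabla\phi^*)$. Your additional bookkeeping on base points and neighborhoods is consistent with (and slightly more explicit than) the paper's argument.
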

\begin{proof}
The result follows from the identity $\lenv{\phi}{f}{\lambda}= (\lenv{\phi}{f}{\lambda} \circ \nabla \phi^*) \circ \nabla \phi$ via the chain rule and Proposition~\ref{prop:gradient_left_phi}: Then we have for $\y$ near $\bar \y$ that 
\begin{align*}
\nabla \lenv{\phi}{f}{\lambda}(\y) &= \nabla \big((\lenv{\phi}{f}{\lambda} \circ \nabla \phi^*) \circ \nabla \phi \big)(\y) \\
&= \nabla^2 \phi(\y) \cdot \nabla \big(\lenv{\phi}{f}{\lambda} \circ \nabla \phi^*\big)(\nabla \phi(\y)) \\
&= \frac{1}{\lambda} \nabla^2 \phi(\y) \big(\nabla \phi^*(\nabla \phi(\y))-\lprox{\phi}{f}{\lambda}(\nabla\phi^*(\nabla \phi(\y)))\big)\\
&= \frac{1}{\lambda} \nabla^2 \phi(\y) \big(\y-\lprox{\phi}{f}{\lambda}(\y)\big).
\end{align*}
If, furthermore, $\phi$ is very strictly convex and $\nabla^2 \phi$ is locally Lipschitz, clearly, $\nabla \lenv{\phi}{f}{\lambda}$ is locally Lipschitz at $\bar \y$ as it is given as the product of two locally Lipschitz maps.
\qed
\end{proof}

In view of Lemma~\ref{lem:right_prox}, the right Bregman envelope involves the expression $\lenv{\phi^*}{(f \circ \nabla \phi^*)}{\lambda} \circ \nabla \phi$. This allows us to invoke the proposition above to derive a gradient formula for the right envelope.
\begin{corollary} \label{cor:gradient_env_right}
Let $\phi \in \Gamma_0(\bR^m)$ be Legendre with $\dom \phi = \bR^m$ and the function $\map{f}{\bR^m}{\exR}$ be proper lsc and right prox-bounded with threshold $\lambda_{f}$. For $\bar \x \in \dom f$ let $f \circ \nabla \phi^*$ be prox-regular relative to $\phi^*$ at $\nabla \phi(\bar{\x})$ for \mynewline$\bar{\q}\in\partial (f \circ \nabla \phi^*)(\nabla\phi(\bar{\x}))$. If $\lambda \in \left]0,\lambda_f\right[$ is sufficiently small we have that $\renv{\phi}{f}{\lambda}$ is $\mathcal{C}^1$ around
\begin{align*}
\bar\y=\bar{\x} + \lambda \bar{\q}
\end{align*}
with 
\begin{align} \label{eq:gradient_formula_right}
\nabla \renv{\phi}{f}{\lambda}(\y)&=\frac1\lambda \big(\nabla \phi(\y)-\nabla \phi(\rprox{\phi}{f}{\lambda}(\y))\big)\notag \\
&= \frac1\lambda \big(\nabla \phi(\y)-\lprox{\phi^*}{(f\circ\nabla \phi^*)}{\lambda}(\nabla \phi(\y))\big),
\end{align}
and $\y$ sufficiently close to $\bar{\y}$.
If, furthermore, $\phi$ is very strictly convex, $\nabla \renv{\phi}{f}{\lambda}$ is Lipschitz continuous on a neighborhood of $\bar{\y}$.
\end{corollary}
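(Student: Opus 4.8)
The plan is to derive everything from Lemma~\ref{lem:right_prox}, which expresses the right envelope as a composition, and then reduce to the already-proven Proposition~\ref{prop:gradient_left_phi}. Concretely, Lemma~\ref{lem:right_prox} gives $\renv{\phi}{f}{\lambda} = \lenv{\phi^*}{(f\circ\nabla\phi^*)}{\lambda}\circ\nabla\phi$, and since $\nabla\phi = \nabla\phi^{**} = \nabla(\phi^*)^*$, this is exactly the composition $\lenv{\psi}{g}{\lambda}\circ\nabla\psi^*$ treated in Proposition~\ref{prop:gradient_left_phi} under the substitution $\psi := \phi^*$ and $g := f\circ\nabla\phi^*$. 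So the whole corollary becomes a matter of verifying that the pair $(\psi,g)$ meets the hypotheses of that proposition and then translating the conclusion back through Lemma~\ref{lem:right_prox}.

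First I would check the hypotheses. By Lemma~\ref{lem:legendre_props}(ii), $\phi^*$ is Legendre; and since $\dom\phi = \bR^m$, applying Lemma~\ref{lem:legendre_props}(iv) to $\phi^*$ (whose conjugate is $\phi$) shows $\phi^*$ is super-coercive. The function $g = f\circ\nabla\phi^*$ is proper lsc, because $\nabla\phi^*$ is continuous on $\bR^m = \intr(\dom\phi^*)$; and, as already observed in the discussion preceding the definition of relative right prox-boundedness, the right prox-boundedness of $f$ relative to $\phi$ with threshold $\lambda_f$ is precisely the (left) prox-boundedness of $g$ relative to $\phi^*$ with the same threshold. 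Finally, the corollary's standing assumption states exactly that $g$ is relatively prox-regular relative to $\phi^*$ at $\bar\x' := \nabla\phi(\bar\x) \in \intr(\dom\phi^*)$ for $\bar\q' := \bar\q \in \partial g(\bar\x')$, with $\bar\x' \in \dom g$ since $\bar\x \in \dom f$.

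Then I would invoke Proposition~\ref{prop:gradient_left_phi}. Its reference point is $\nabla\psi(\bar\x') + \lambda\bar\q' = \nabla\phi^*(\nabla\phi(\bar\x)) + \lambda\bar\q = \bar\x + \lambda\bar\q = \bar\y$, matching the statement. The proposition then yields, for $\lambda$ sufficiently small and $\y$ near $\bar\y$, that $\renv{\phi}{f}{\lambda}$ is $\mathcal{C}^1$ with
\[
\nabla\renv{\phi}{f}{\lambda}(\y) = \tfrac{1}{\lambda}\big(\nabla\phi(\y) - \lprox{\phi^*}{(f\circ\nabla\phi^*)}{\lambda}(\nabla\phi(\y))\big),
\]
which is the second equality in \eqref{eq:gradient_formula_right}. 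The first equality follows by feeding the right-prox relation of Lemma~\ref{lem:right_prox} back in: applying $\nabla\phi$ to $\rprox{\phi}{f}{\lambda}(\y) = \nabla\phi^*\big(\lprox{\phi^*}{(f\circ\nabla\phi^*)}{\lambda}(\nabla\phi(\y))\big)$ and using $\nabla\phi\circ\nabla\phi^* = \mathrm{id}$ on $\intr(\dom\phi^*)$ gives $\nabla\phi(\rprox{\phi}{f}{\lambda}(\y)) = \lprox{\phi^*}{(f\circ\nabla\phi^*)}{\lambda}(\nabla\phi(\y))$, so the two forms of the gradient coincide. Here I would note that the left-prox value indeed lies in $\intr(\dom\phi^*)$, which holds by continuity since near $\bar\y$ it is single-valued and close to $\nabla\phi(\bar\x) \in \intr(\dom\phi^*)$.

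For the Lipschitz claim, if $\phi$ is very strictly convex then Lemma~\ref{lem:inverse_hessian} shows $\phi^*$ is very strictly convex as well, so the ``furthermore'' part of Proposition~\ref{prop:gradient_left_phi} applies to $(\psi,g) = (\phi^*, f\circ\nabla\phi^*)$ and gives that $\nabla\big(\lenv{\phi^*}{(f\circ\nabla\phi^*)}{\lambda}\circ\nabla\phi\big) = \nabla\renv{\phi}{f}{\lambda}$ is Lipschitz continuous near $\bar\y$. I expect no deep obstacle: the argument is in essence a change of variables under $\nabla\phi$. The only points that genuinely need care are the super-coercivity of $\phi^*$ — which is precisely why the full-domain assumption $\dom\phi = \bR^m$ is imposed here rather than the super-coercivity of $\phi$ used for the left envelope — and the interior membership needed to identify the two forms of the gradient; both are supplied by the cited lemmas.
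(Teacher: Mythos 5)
Your proof is correct and takes exactly the paper's route: rewrite $\renv{\phi}{f}{\lambda}= \lenv{\phi^*}{(f\circ\nabla\phi^*)}{\lambda}\circ\nabla\phi$ via Lemma~\ref{lem:right_prox} and apply Proposition~\ref{prop:gradient_left_phi} to the pair $(\phi^*,\, f\circ\nabla\phi^*)$, using $\dom\phi=\bR^m$ for the super-coercivity of $\phi^*$, the transfer of prox-boundedness, and Lemma~\ref{lem:inverse_hessian} for the Lipschitz part. One incidental slip: $\dom\phi=\bR^m$ does \emph{not} give $\intr(\dom\phi^*)=\bR^m$ (take $\phi=\exp$, for which $\dom\phi^*=[0,\infty[$), so your justification of the lower semicontinuity of $f\circ\nabla\phi^*$ via ``continuity of $\nabla\phi^*$ on $\bR^m$'' is not quite right --- though the paper's own proof leaves this same point implicit and the main argument is unaffected.
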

\begin{proof}
In view of Lemma~\ref{lem:legendre_props}, $\phi^*$ is super-coercive. Then the result follows from the identities 
$$
\renv{\phi}{f}{\lambda}= \lenv{\phi^*}{(f \circ \nabla \phi^*)}{\lambda} \circ \nabla \phi
$$
and 
$$
\rprox{\phi}{f}{\lambda}(\y) = \nabla \phi^*\big(\lprox{\phi^*}{(f \circ \nabla \phi^*)}{\lambda} (\nabla \phi(\y))\big),
$$
cf. Lemma~\ref{lem:right_prox} as well as Lemma~\ref{lem:continuity_right} and Proposition~\ref{prop:gradient_left_phi} applied to \mynewline$\lenv{\phi^*}{(f \circ \nabla \phi^*)}{\lambda} \circ \nabla \phi$.
\qed
\end{proof}

Note that when $\phi$ is very strictly convex and in addition $\nabla^2 \phi$ is Lipschitz at $\bar \x$, in view of Corollary~\ref{cor:prox_regularity_right} and Proposition~\ref{prop:prox_regular_very_strictly_convex}, the relative prox-regularity assumption on $f \circ \nabla \phi^*$ is equivalent to classical prox-regularity of $f$ at $\bar \x$ for $\nabla^2 \phi(\bar \x)\bar \q \in \partial f(\bar\x)$.

The gradient formula of the right Bregman envelope provides us with an explicit sufficient condition for the local $\mathcal{C}^1$ property of the right Bregman distance function of a convex set, which, in view of \cite[Theorem 6.6]{bauschke2009bregman} and \cite[Example 7.5]{bauschke2009bregman}, does not hold globally in general. To illustrate this we revisit and extend \cite[Example 7.5]{bauschke2009bregman}.
\begin{example}
Define the Legendre function $\map{\phi}{\bR^2}{\bR}$ as 
$$
\phi(\x_1, \x_2) := \exp(\x_1) + \exp(\x_2).
$$ 
Then the convex conjugate is given as
$$
\phi^*(\y_1, \y_2) = \begin{cases} \y_1 \log(\y_1) - \y_1 + \y_2 \log(\y_2) - \y_2 & \text{if $\y_1 \geq 0, \y_2 \geq 0$,} \\
+\infty, & \text{otherwise.}
\end{cases}
$$
Define $f:=\delta_C$ for
$$
C:=\{(\x,2\x) : \x \in [0,1]\}.
$$
Then $f \circ \nabla \phi^* = \delta_{\nabla \phi(C)}$ for $\nabla \phi(C) = \{(\exp(\x),\exp(2\x)) : \x \in [0,1]\}$ which is obviously a nonconvex set. Let $\bar{\x} \in C$. Since $\phi^*$ is $\mathcal{C}^3$ on $\intr(\dom \phi^*)$ and very strictly convex and therefore $\nabla^2 \phi^*$ is full rank at $\nabla\phi(\bar \x)$ and \mynewline$(f \circ \nabla \phi^*)(\nabla \phi(\bar \x))=f(\bar \x)$ is finite we have that $f \circ \nabla \phi^*$ is strongly amenable at $\nabla\phi(\bar \x)$. In view of \cite[Proposition 13.32]{RoWe98} $f \circ \nabla \phi^*$ is also prox-regular at $\nabla\phi(\bar \x)$. Let $\bar{\q} \in \partial (f \circ \nabla \phi^*)(\nabla \phi(\bar \x))$, i.e. $\bar \q$ is a limiting normal of $\nabla \phi(C)$ at $\nabla\phi(\bar \x)$ and, in view of the prox-regularity of $f \circ \nabla \phi^*$ at $\nabla\phi(\bar \x)$, even a proximal normal of $\nabla \phi(C)$ at $\nabla\phi(\bar \x)$. In view of Proposition~\ref{prop:prox_regular_very_strictly_convex}, $f \circ \nabla \phi^*$ is also prox-regular relative to $\phi^*$ at $\nabla \phi(\bar \x)$. Then we can invoke Corollary~\ref{cor:gradient_env_right} to assert, that for $\lambda > 0$ being sufficiently small, the right Bregman distance function $\renv{\phi}{f}{\lambda} = \renv{\phi}{\delta_C}{1}$ is $\mathcal{C}^1$ around the point $\bar{\y}=\bar \x + \lambda \bar \q$.
\end{example}
In view of Corollary~\ref{cor:prox_regularity_right}, the local $\mathcal{C}^1$ property of the right Bregman distance function $\renv{\phi}{\delta_C}{1}$ even holds for nonconvex $C$ with $\delta_C$ prox-regular.

We conclude this section by providing an interesting additional (global) regularity property of the Bregman envelope function: Both the left envelope $\lenv{\phi}{f}{\lambda} \circ \nabla \phi^*$ and the right envelope $\renv{\phi}{f}{\lambda}$ have the one-sided $L$-smad property relative to $\phi^*$ resp. $\phi$, and therefore yield promising candidates for optimization with Bregman proximal gradient methods \cite{BBT2016,BSTV2018}, cf. Section~\ref{sec:algorithm}.

\begin{proposition} \label{prop:lsmad_env}
Let $\phi \in \Gamma_0(\bR^m)$ be Legendre and $f:\bR^m\to\exR$ be proper lsc. Then it holds that:
\begin{enumerate}
\item[(i)] If $\dom f \cap \dom \phi$ is non-empty, $f$ is prox-bounded relative to $\phi$ with threshold $\lambda_f$ and $\phi$ is super-coercive, then for any $\lambda \in \left]0,\lambda_f\right[$,
$$
\frac1\lambda \phi^* - \lenv{\phi}{f}{\lambda} \circ \nabla \phi^* = {\left( f + \frac1\lambda \phi \right)}^*{\left( \frac\cdot\lambda \right)}
$$
is proper, lsc and convex.
\item[(ii)] If $f$ is right prox-bounded relative to $\phi$ with threshold $\lambda_f$ and $\dom \phi= \bR^m$, then for any $\lambda \in \left]0,\lambda_f\right[$,
$$
 \frac1\lambda \phi - \renv{\phi}{f}{\lambda} = {\left( f \circ \nabla \phi^* + \frac1\lambda \phi^* \right)}^*{\left( \frac\cdot\lambda \right)}
$$
is proper, lsc and convex.
\end{enumerate}
\end{proposition}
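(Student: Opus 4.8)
The plan is to derive both parts from a single Fenchel-type identity for the composition $\lenv{\phi}{f}{\lambda}\circ\nabla\phi^*$, obtained by direct computation. First I would fix $\w\in\bR^m$, set $\y:=\nabla\phi^*(\w)$ so that $\nabla\phi(\y)=\w$ by Lemma~\ref{lem:legendre_props}(iii), and expand the Bregman distance in the definition of the envelope. After collecting the terms that do not depend on the minimization variable $\x$, the Fenchel equality $\iprod{\w}{\nabla\phi^*(\w)}-\phi(\nabla\phi^*(\w))=\phi^*(\w)$ converts the constant part into $\frac1\lambda\phi^*(\w)$, while the remaining infimum over $\x$ is exactly $-\big(f+\frac1\lambda\phi\big)^*(\w/\lambda)$. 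This yields
$$
(\lenv{\phi}{f}{\lambda}\circ\nabla\phi^*)(\w)=\frac1\lambda\phi^*(\w)-\Big(f+\frac1\lambda\phi\Big)^*(\w/\lambda),
$$
which is precisely the identity claimed in~(i) after rearranging.

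For the regularity assertions in~(i), I would observe that the right-hand side $\big(f+\frac1\lambda\phi\big)^*(\cdot/\lambda)$ is a convex conjugate composed with the linear map $\w\mapsto\w/\lambda$, hence automatically convex and lsc. Properness then follows from the left-hand side of the identity: since $\phi$ is super-coercive we have $\dom\phi^*=\bR^m$ (Lemma~\ref{lem:legendre_props}(iv)), so $\phi^*$ is finite everywhere and $\nabla\phi^*$ is defined on all of $\bR^m$ with range in $\intr(\dom\phi)$; and under relative prox-boundedness with $\lambda<\lambda_f$, Lemma~\ref{lem:continuity_left}(ii) guarantees $\lenv{\phi}{f}{\lambda}$ is finite (indeed continuous) on $\intr(\dom\phi)$. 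Thus $\lenv{\phi}{f}{\lambda}\circ\nabla\phi^*$ is finite on all of $\bR^m$, the difference $\frac1\lambda\phi^*-\lenv{\phi}{f}{\lambda}\circ\nabla\phi^*$ is finite everywhere, and so the conjugate expression is proper.

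Part~(ii) I would deduce from~(i) by applying it to the pair $(\phi^*,\,f\circ\nabla\phi^*)$ in place of $(\phi,f)$. By Lemma~\ref{lem:legendre_props}(ii) the kernel $\phi^*$ is Legendre, and since $\dom\phi=\bR^m$ it is super-coercive by Lemma~\ref{lem:legendre_props}(iv) applied to $\phi^*$ (using $\phi^{**}=\phi$). Right prox-boundedness of $f$ relative to $\phi$ is, via Lemma~\ref{lem:right_prox}, exactly left prox-boundedness of $f\circ\nabla\phi^*$ relative to $\phi^*$ with the same threshold, and a base point in $\intr(\dom\phi)\cap\dom f$ furnishes (through $\nabla\phi$) a point in $\dom(f\circ\nabla\phi^*)\cap\dom\phi^*$. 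Invoking~(i) for this pair, and using $(\phi^*)^*=\phi$, $\nabla(\phi^*)^*=\nabla\phi$ together with the envelope identity $\renv{\phi}{f}{\lambda}=\lenv{\phi^*}{(f\circ\nabla\phi^*)}{\lambda}\circ\nabla\phi$ from Lemma~\ref{lem:right_prox}, the left-hand side becomes $\frac1\lambda\phi-\renv{\phi}{f}{\lambda}$ and the right-hand side becomes $\big(f\circ\nabla\phi^*+\frac1\lambda\phi^*\big)^*(\cdot/\lambda)$, which gives the stated formula along with its properness, lower semicontinuity and convexity.

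The computation itself is routine; the steps that require care are the Fenchel bookkeeping that produces the constant $\frac1\lambda\phi^*(\w)$ after the substitution $\y=\nabla\phi^*(\w)$, and, in~(ii), checking that every hypothesis of~(i) genuinely transfers to the conjugate pair $(\phi^*,f\circ\nabla\phi^*)$ — in particular that super-coercivity of $\phi^*$ is supplied by the full-domain assumption $\dom\phi=\bR^m$ rather than assumed directly.
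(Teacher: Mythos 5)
Your argument is correct and follows essentially the same route as the paper: the conjugation identity $\lenv{\phi}{f}{\lambda}\circ\nabla\phi^* = \tfrac{1}{\lambda}\phi^* - \big(f+\tfrac{1}{\lambda}\phi\big)^*(\cdot/\lambda)$ (which the paper imports from Kan--Song rather than recomputing), convexity and lower semicontinuity for free from conjugacy, properness of the conjugate from finiteness of the left-hand side (the paper instead argues via boundedness below of $f+\tfrac{1}{\lambda}\phi$ using Proposition~\ref{prop:relative_prox_bounded_equivalence} and the biconjugation theorem), and part (ii) by passing to the pair $(\phi^*, f\circ\nabla\phi^*)$ exactly as in Lemma~\ref{lem:right_prox}. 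These are cosmetic variations on the same proof, and both properness arguments are valid.
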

\begin{proof}
Let $f$ be prox-bounded relative to $\phi$ super-coercive with threshold \mynewline $\lambda_f>0$ and let $\lambda \in \left]0,\lambda_f\right[$. 
From \cite[Theorem 2.4]{kan2012moreau} we obtain that we have for all $\y \in \bR^m$:
$$
\frac1\lambda \phi^*(\y) - {\left( f + \frac1\lambda \phi \right)}^*{\left( \frac\y\lambda \right)} = \big(\lenv{\phi}{f}{\lambda} \circ \nabla \phi^*\big)(\y).
$$
Then, in view of Lemma~\ref{lem:continuity_left}, we know that $\lenv{\phi}{f}{\lambda}$ is proper and continuous on $\intr(\dom \phi) = \ran \nabla \phi^*$, which means in particular, thanks to Lemma~\ref{lem:legendre_props}(iv), $\dom(\lenv{\phi}{f}{\lambda} \circ \nabla \phi^*) = \bR^m$. Furthermore, in view of Proposition~\ref{prop:relative_prox_bounded_equivalence}(ii), \mynewline $f + (1/\lambda) \phi$ is bounded below and proper. Then, clearly, $\con (f + (1/\lambda) \phi)$ is proper, and in view of \cite[Theorem 11.1]{RoWe98}, $\left( f + (1/\lambda) \phi \right)^*\left( \cdot/\lambda \right)$ is proper, lsc and convex. Since in view of Lemma~\ref{lem:legendre_props}(iv) also $\dom \phi^* =\bR^m$ we can reorder the terms and obtain that
$$
\frac1\lambda \phi^*(\y) - \big(\lenv{\phi}{f}{\lambda} \circ \nabla \phi^*\big)(\y) = {\left( f + \frac1\lambda \phi \right)}^*{\left( \frac\y\lambda \right)},
$$
and the assertion follows.

Part (ii) follows from a similar argument invoking \cite[Proposition 2.4(ii)]{bauschke2017regularizing} and the observation that right prox-boundedness of $f$ relative to $\phi$ implies prox-boundedness of $f \circ \nabla \phi^*$ relative to $\phi^*$.
\qed
\end{proof}

\section{Algorithmic Implications of Relative Prox-regularity} \label{sec:algorithm}
\subsection{Example of a Simple Bregman Proximal Mapping}

We present an analytically solvable Bregman proximal mapping for the relatively prox-regular function $(1/p) \abs{x}^p$ for $p\in \left]0,1\right[$. While the function is also prox-regular, the classical proximal mapping cannot be solved analytically, except for $p=1/2$. For each $p\in \left]0,1\right[$, we define a Legendre function $\phi$ relative to which $(1/p) \abs{x}^p$ is prox-regular and the left Bregman proximal mapping can be solved easily. This example is potentially interesting for applications that involve optimization with sparsity regularization, as for example in compressed sensing.
\begin{example}
Let $f:\bR \to \bR$ with $f(x) = (1/p) \abs{\x}^p$ with $p\in \left]0,1\right[$ and choose $\phi(\x) = (1/q) \abs{\x}^q$ with $q>1$. For some $\y \in \bR^m$ we seek a closed form solution of the left Bregman proximal mapping
$$
\lprox{\phi}{f}{\lambda}(\y) = \argmin_{x \in \bR} \frac 1p \abs{x}^p + \frac1\lambda \D[\phi](x,\y) = \argmin_{\x \in \bR} \frac 1p \abs{x}^p + \frac 1{q\lambda} \abs{x}^q - c x,
$$
for $c:=(1/\lambda) \sign(\y)\abs{\y}^{q-1}$. Let $\bar \x \in \lprox{\phi}{f}{\lambda}(\y) $. Note that
$$
  \partial f(\x) = \begin{cases} 
                  \sign(\x) \abs{\x}^{p-1} & \text{if $x\neq 0$}, \\
                  \bR, & \text{otherwise}.
                  \end{cases}
$$
and 
$
  \phi'(\x) = \sign(\x)\abs{\x}^{q-1}.$
Then, the first order necessary optimality condition is given as follows:
$$
0 \in \partial f(\bar \x) + \frac{1}{\lambda} \phi'(\bar \x) - c = \begin{cases}
\bar \x^{p-1}  + \frac{1}{\lambda}\bar \x^{q-1} - c & \text{if $ \bar \x>0$,} \\
\bR & \text{if $\bar \x=0$,} \\
-\bar \x^{p-1}  - \frac{1}{\lambda} \bar \x^{q-1} - c, & \text{otherwise.}
\end{cases}
$$
This shows that the left Bregman proximal mapping can be evaluated by checking the three conditions individually and combining the minimum objective solutions. Note that the first and the last condition are exclusive while the first two and the last two conditions can potentially be satisfied simultaneously. Indeed, the Bregman proximal mapping of the given $f$ can be multivalued. Assume that $\bar \x > 0$ as the other case follows analogously. I.e. we seek a point $\bar \x > 0$ that satisfies
$$
  \bar \x^{p-1}\left( 1 + \frac{1}{\lambda} \bar \x^{q-p} - c \bar \x^{1-p} \right) = 0.
$$
Let $\alpha\in\set{2,3,4,\ldots}$ and choose $q$ according to the following condition: 
$$
\frac{q-p}{1-p} = \alpha,
$$
which is equivalent to $q = \alpha + (1-\alpha) p$. Now, the substitution 
$$
\bar \x^{1-p} = u \iff \bar \x = u^{1/(1-p)}
$$
leads to the following root-finding problem
$$
  u^{-1}\left(1 + \frac1\lambda u^{\frac{q-p}{1-p}} - c u \right) = 0
  \quad \iff \quad 
  1 + \frac1\lambda u^\alpha - c u = 0,
$$
which can be solved analytically (at least) for $\alpha\in\set{2,3,4}$. Verification that $f$ is relatively prox-regular is yet to be performed. Let $\bar \x>0$. We can choose $\eps>0$ such that the $\eps$-ball around $\bar \x$ lies in $\bR_{>0}$. Then we find $r>0$ sufficiently large such that for all $\x\in \bR$ with $|x-\bar\x | < \epsilon$ the second order derivative of $f + r \phi$ at $\x$, given as
$(f + r\phi)''(\x) =(1/(p-1)) \x^{p-2} + r (1/(q-1)) \x^{q-2} \geq 0$,
is nonnegative, which is asserted for 
$$
r\geq \frac{q-1}{1-p} \cdot\inf \left\{ \x^{(p-2)/(q-2)} : |\x - \bar x| < \epsilon \right\}> 0,
$$
since $(q-1)/(1-p) > 0$. This implies that $f + r\phi$ is convex on the open $\eps$-ball around $\bar \x$ and therefore $f$ is relatively prox-regular at $\bar \x$.
The case $\bar \x<0$ follows by symmetry. Now, we choose $\bar \x = 0$ and fix $\bar \q \in \partial f(0) = \bR$. Since $\lim_{x \to 0,\,\x\neq0} |f'(x)| \to \infty$ we can find $\epsilon$ sufficiently small such that the graph of the $\epsilon$-localization $T$ of $\partial f$ around $(\bar\x,\bar\q)$ degenerates to 
$$
\gph T = \left\{ (\bar \x,\q) : |\q - \bar \q| < \epsilon \right\}.
$$
Relative prox-regularity of $f$ at $\bar \x = 0$ for $\bar \q$ is then asserted by verifying the subgradient inequality \eqref{eq:prox_regularity} for all $(\x, \q) \in \gph T$.
Indeed, we can find $\epsilon > 0$, such that for all such $|\q - \bar \q| < \epsilon$ we have $f(\x') \geq \q \x'$ for all $ |\x' - \bar x| < \epsilon$, which shows that $f$ is relatively prox-regular also at $0$.

\end{example}

\subsection{Optimization Algorithms}
Let $\phi \in \Gamma_0(\bR^m)$ be a Legendre function. We denote by $C:= \intr(\dom \phi)$. We are interested in the following optimization problem,
\begin{align} \label{eq:model_left}
\text{minimize} \left\{F_\lambda(\w, \x) \equiv f(\x)  + \frac{1}{\lambda} \D[\phi](\x,\nabla\phi^*(A(\w))) + g(\w) : (\w, \x)\in \bR^n \times C \right\},
\end{align}
where $f:\bR^m \to \exR$ and $g:\bR^n \to \exR$ are proper lsc functions and $A: \bR^n \to \bR^m$ is an optional linear map.
Via inf-projection with respect to $\x$, the model is equivalent to the left Bregman relaxation:
\begin{align} \label{eq:model_left_envelope}
\text{minimize} \left\{ \big(\lenv{\phi}{f}{\lambda} \circ \nabla\phi^* \circ A\big)(\w) + g(\w) : \w\in \bR^n \right\}.
\end{align}
We are particularly interested in finding stationary points of the lower problem by a proximally regularized alternating minimization strategy applied to the upper problem.

\subsubsection{Convergence with Bregman Proximal Regularization}
The first algorithm we consider is a variant of alternating minimization of model~\eqref{eq:model_left}, see Algorithm~\ref{alg:alternating_minimization_left_proximal}, which involves a proximal regularization of both variables, similar to proximal alternating minimization \cite{attouch2010proximal}.

\begin{figure}[h]
\centering
\fbox{
\begin{minipage}{0.83\linewidth}
\begin{algorithm}[Bregman Proximal Alternating Minimization] \label{alg:alternating_minimization_left_proximal}
{\ }
Choose appropriate Legendre functions $\sigma \in \Gamma_0(\bR^m)$ and $\omega \in  \Gamma_0(\bR^n)$ with $\dom\omega =\bR^n$ and $\dom \sigma \supseteq \dom \phi$ and initialize $\x^0 \in \intr(\dom \phi)$ and $\w^0 \in \bR^n$.
For $t=1, 2, \dots$ do
\begin{align}
\x^{t+1} &:= \argmin_{\x \in \bR^m} ~ F_\lambda(\w^t, \x) + \D[\sigma](\x, \x^t), \\
\w^{t+1} &:= \argmin_{\w \in \bR^n} ~ F_\lambda(\w, \x^{t+1}) + \D[\omega](\w, \w^t) \label{eq:w_update}.
\end{align}
\end{algorithm}
\end{minipage}
}
\end{figure}
Note that the $\w$-update is in general a difficult problem. We may therefore replace the coupling function $\D[\phi](\x,\nabla\phi^*(A(\w)))$ with a proximal linearization as in \emph{proximal alternating linearized minimization} (PALM) \cite{bolte2014proximal}, which is captured in the Bregman proximal term $\D[\omega](\w,\w^t)$ in our formulation. 
E.g. let $\phi^*$ be classically $L$-smooth. Since, in addition, $A$ is linear, 
$$
\D[\phi](\x^{t+1},\nabla\phi^*(A(\w))) = \phi(\x^{t+1}) +  \phi^*(A(\w)) - \langle  \w, A^*\x^{t+1} \rangle
$$
is guaranteed to be $L\|A\|^2$-smooth in $\w$ and we may choose 
$$
\omega(\w):= \frac{M}{2\lambda}\|\w\|^2 - \frac{1}{\lambda}\phi^*(A(\w)),
$$
for $M > L\|A\|^2$. Then the $\w$-update \eqref{eq:w_update} becomes a classical proximal gradient step on $F_\lambda$ as in PALM:
$$
\w^{t+1} = \argmin_{\w \in \bR^n} ~ g(\w) + \frac{1}{\lambda}\langle \w, A^*(\nabla \phi^*(A\w^t) - \x^{t+1}) \rangle + \frac{M}{2\lambda}\|\w -\w^t\|^2.
$$

Remarkably, prox-regularity as a stability condition allows us to interpret the limit point $\w^*$ as a stationary point of the regularized problem \eqref{eq:model_left_envelope}, even though the algorithm performs proximally regularized $\x$-updates and the problem is nonconvex. A similar ``translation of stationarity'' has been oberserved previously in \cite{LWC18,laude-wu-cremers-aistats-19} for the classical Moreau envelope and an anisotropic generalization of the former.
\begin{theorem} \label{thm:convergence}
Let $\phi, \sigma \in \Gamma_0(\bR^m)$, $\omega \in \Gamma_0(\bR^n)$ be Legendre with $\dom\omega =\bR^n$ and $\dom \sigma \supseteq \dom \phi$ and $\phi$ be super-coercive. Let $f:\bR^m \to \exR$ and $g:\bR^n \to \exR$ be proper lsc, $\dom f \cap \dom \phi$ be non-empty and let the qualification condition \eqref{eq:qualification_interior} hold. Let $\lambda >0$ and $F_\lambda:\bR^n \times \bR^m \to \exR$ be coercive. Then any limit point $(\w^*, \x^*)$ of the sequence of iterates $\{\w^t, \x^t\}_{t \in \bN}$ produced by Algorithm~\ref{alg:alternating_minimization_left_proximal} is a stationary point of $F_\lambda$, i.e.
$$
0 \in \partial F_\lambda(\w^*, \x^*),
$$
and in particular $\x^* \in \intr(\dom \phi)$ and
\begin{align}
0 &\in \partial f(\x^*) + \frac{1}{\lambda} (\nabla \phi(\x^*) - A(\w^*)), \label{eq:optimality_1}\\
0 &\in \partial g(\w^*) + \frac{1}{\lambda} A^* (\nabla \phi^*(A(\w^*)) - \x^*) \label{eq:optimality_2}.
\end{align} 
If, furthermore, $f$ is prox-regular relative to $\phi$ at $\x^*$ for $\q^* \in \partial f(\x^*)$, $\lambda >0$ is chosen to be sufficiently small and it holds that $A(\w^*)=\nabla \phi(\x^*) + \lambda \q^*$, then $\w^*$ is also a stationary point of the left Bregman relaxation \eqref{eq:model_left_envelope}, i.e. in particular we have:
$$
0 \in \partial \big(\lenv{\phi}{f}{\lambda} \circ \nabla\phi^* \circ A + g\big)(\w^*).
$$
\end{theorem}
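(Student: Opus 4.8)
The plan is to prove the two assertions in turn: first the stationarity $0\in\partial F_\lambda(\w^*,\x^*)$ together with~\eqref{eq:optimality_1}--\eqref{eq:optimality_2}, then the ``translation of stationarity'' onto the envelope. I would begin by recording the unconstrained reformulation $F_\lambda(\w,\x)=f(\x)+\tfrac1\lambda\phi(\x)+g(\w)+\tfrac1\lambda\phi^*(A(\w))-\tfrac1\lambda\langle A(\w),\x\rangle$, obtained from the conjugacy identity $\D[\phi](\x,\nabla\phi^*(\z))=\phi(\x)+\phi^*(\z)-\langle\z,\x\rangle$; this exhibits $F_\lambda$ as a separable part plus a $\mathcal{C}^1$ coupling, and its coercivity forces prox-boundedness of $f$ relative to $\phi$ (a slice-wise coercivity argument). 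Since $\D[\sigma](\x^t,\x^t)=\D[\omega](\w^t,\w^t)=0$, each subproblem is majorized by its value at the previous iterate, giving the descent estimate
$$F_\lambda(\w^{t+1},\x^{t+1})+\D[\sigma](\x^{t+1},\x^t)+\D[\omega](\w^{t+1},\w^t)\le F_\lambda(\w^t,\x^t).$$
Coercivity of $F_\lambda$ then yields boundedness of the iterates (they remain in a fixed sublevel set), convergence of $\{F_\lambda(\w^t,\x^t)\}$, and, by telescoping, $\D[\sigma](\x^{t+1},\x^t)\to0$ and $\D[\omega](\w^{t+1},\w^t)\to0$.

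Next I would fix a subsequence $(\w^{t_k},\x^{t_k})\to(\w^*,\x^*)$ and first show the predecessors also converge: using Lemma~\ref{lem:bregman_props}(i), continuity of $\nabla\sigma,\nabla\omega$ on the interior (Lemma~\ref{lem:legendre_props}, with $\dom\omega=\bR^n$ for $\w$ and the iterates staying in $\intr(\dom\phi)\subseteq\intr(\dom\sigma)$ by Lemma~\ref{lem:prox_interior} for $\x$), together with boundedness, the vanishing Bregman distances force $\x^{t_k-1}\to\x^*$ and $\w^{t_k-1}\to\w^*$, whence the increments $\nabla\sigma(\x^{t_k})-\nabla\sigma(\x^{t_k-1})$ and $\nabla\omega(\w^{t_k})-\nabla\omega(\w^{t_k-1})$ vanish. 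Writing Fermat's rule~\cite[Theorem 10.1]{RoWe98} for each subproblem (where the $\D[\sigma]$- and $\D[\omega]$-terms contribute exactly these increments and the coupling contributes its smooth gradient) produces selections $\q_f^{t_k}\in\partial f(\x^{t_k})$ and $\q_g^{t_k}\in\partial g(\w^{t_k})$. The $\w$-side passes to the limit immediately, since $\nabla\phi^*$ is continuous on $\bR^m$, yielding $-\tfrac1\lambda A^*(\nabla\phi^*(A\w^*)-\x^*)\in\partial g(\w^*)$, i.e.~\eqref{eq:optimality_2}. For the $\x$-side I would first certify $\x^*\in\intr(\dom\phi)$: if instead $\x^*\in\bdry(\dom\phi)$, essential smoothness gives $\|\nabla\phi(\x^{t_k})\|\to\infty$, so from $\nabla\phi(\x^{t_k})+\lambda\q_f^{t_k}=A\w^{t_k-1}+o(1)$ bounded one extracts a unit horizon subgradient in $\partial^\infty f(\x^*)\cap(-N_{\dom\phi}(\x^*))$, contradicting~\eqref{eq:qualification_interior}. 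Hence $\nabla\phi(\x^{t_k})$ converges, $\q_f^{t_k}\to-\tfrac1\lambda(\nabla\phi(\x^*)-A\w^*)$, and with the attentive convergences $f(\x^{t_k})\to f(\x^*)$, $g(\w^{t_k})\to g(\w^*)$ (obtained in the standard way by testing each subproblem's optimality against the limit and combining with lower semicontinuity) outer semicontinuity of the limiting subdifferential delivers~\eqref{eq:optimality_1}. As $F_\lambda$ is the separable sum $f\oplus g$ plus a $\mathcal{C}^1$ coupling, the sum rule combines these into $0\in\partial F_\lambda(\w^*,\x^*)$.

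For the final claim I would set $\q^*:=\tfrac1\lambda(A\w^*-\nabla\phi(\x^*))$, which by~\eqref{eq:optimality_1} lies in $\partial f(\x^*)$ and satisfies exactly $A\w^*=\nabla\phi(\x^*)+\lambda\q^*$. Assuming $f$ prox-regular relative to $\phi$ at $\x^*$ for $\q^*$ and $\lambda$ small enough, Proposition~\ref{prop:gradient_left_phi} gives that $E:=\lenv{\phi}{f}{\lambda}\circ\nabla\phi^*$ is $\mathcal{C}^1$ around $\nabla\phi(\x^*)+\lambda\q^*=A\w^*$ with $\nabla E(\y)=\tfrac1\lambda(\nabla\phi^*(\y)-\lprox{\phi}{f}{\lambda}(\nabla\phi^*(\y)))$, while Theorem~\ref{thm:prox_regularity_equiv}(iii) identifies $\lprox{\phi}{f}{\lambda}(\nabla\phi^*(A\w^*))=\{\x^*\}$; hence $\nabla E(A\w^*)=\tfrac1\lambda(\nabla\phi^*(A\w^*)-\x^*)$. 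Then $E\circ A$ is $\mathcal{C}^1$ near $\w^*$ with $\nabla(E\circ A)(\w^*)=A^*\nabla E(A\w^*)=\tfrac1\lambda A^*(\nabla\phi^*(A\w^*)-\x^*)$, so the sum rule for a $\mathcal{C}^1$ summand gives $\partial(E\circ A+g)(\w^*)=\nabla(E\circ A)(\w^*)+\partial g(\w^*)$; comparing with~\eqref{eq:optimality_2} shows precisely that $0$ belongs to this set, which is the desired stationarity for~\eqref{eq:model_left_envelope}.

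I expect the main obstacle to lie entirely in the first part: securing the attentive value convergences $f(\x^{t_k})\to f(\x^*)$ and $g(\w^{t_k})\to g(\w^*)$ that make outer semicontinuity of the limiting subdifferential applicable, and simultaneously ruling out $\x^*\in\bdry(\dom\phi)$ via the horizon-subgradient argument against the constraint qualification~\eqref{eq:qualification_interior}. By contrast, the envelope statement is essentially bookkeeping once Proposition~\ref{prop:gradient_left_phi} and Theorem~\ref{thm:prox_regularity_equiv}(iii) are in hand, since the required envelope gradient coincides termwise with the coupling gradient already produced in~\eqref{eq:optimality_2}.
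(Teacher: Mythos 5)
Your proposal is correct and follows essentially the same route as the paper: the same sufficient-decrease/telescoping argument, Fermat's rule on the subproblems, the constraint qualification to keep the iterates and the limit point in $\intr(\dom\phi)$, and then Theorem~\ref{thm:prox_regularity_equiv} together with Proposition~\ref{prop:gradient_left_phi} to translate \eqref{eq:optimality_2} into stationarity for the envelope problem. The only cosmetic difference is that you pass to the limit component-wise via outer semicontinuity of $\partial f$ and $\partial g$ under attentive convergence and then reassemble with the sum rule, whereas the paper invokes closedness of $\gph\,\partial F_\lambda$ under the $F_\lambda$-attentive topology directly.
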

\begin{proof}
The first part of the proof is standard. For the sake of self-containedness we provide a proof in the Appendix. 

For the second part, it should first be noted that since $F_\lambda$ is coercive and $g,f$ are proper lsc, it holds that
$$
\inf_{\x \in \bR^m} f(\x) + g(\w^*) +\frac1\lambda\D[\phi](\x, A(\w^*)) > -\infty
$$
and therefore $f$ is prox-bounded relative to $\phi$ with some threshold $\lambda_f \geq \lambda$. In view of \eqref{eq:optimality_1}, the identity $A(\w^*)=\nabla \phi(\x^*) + \lambda \q^*$ yields 
$$
\q^*=\frac1\lambda (A(\w^*) - \nabla \phi(\x^*)) \in \partial f(\x^*).
$$
Since by assumption $f$ is prox-regular at $\x^*$ for $\q^*\in \partial f(\x^*)$ we can invoke Theorem~\ref{thm:prox_regularity_equiv} and Proposition~\ref{prop:gradient_left_phi} and prove that if the chosen $\lambda\in\left]0,\lambda_f\right[$ is sufficiently small we have that $\x^* = \lprox{\phi}{f}{\lambda}(A(\w^*))$ and $\lenv{\phi}{f}{\lambda}$ is $\mathcal{C}^1$ around $A(\w^*)=\nabla \phi(\x^*) + \lambda \q^*$ with
$$
 \frac{1}{\lambda} A^* (\nabla \phi^*(A(\w^*)) - \x^*) =  A^* \nabla \big(\lenv{\phi}{f}{\lambda} \circ \nabla \phi^*\big)(A(\w^*)).
$$
Combining this with \eqref{eq:optimality_2} yields
$$
0 \in \partial g(\w^*) +  A^* \nabla \big(\lenv{\phi}{f}{\lambda} \circ \nabla \phi^*\big)(A(\w^*)).
$$
In view of \cite[Exercise 8.8(c)]{RoWe98}, we get the conclusion.
\qed
\end{proof}

We conclude this section with the remark that one can derive analogous results for the right Bregman envelope starting from the problem
\begin{align} \label{eq:model_right}
\text{minimize} \left\{H_\lambda(\y, \x) \equiv  f(\x) + \frac{1}{\lambda} \D[\phi](\y, \x) + g(\y): (\y, \x)\in \bR^m \times \bR^m \right\},
\end{align}
with $\dom \phi =\bR^m$. In this case, we aim to find stationary points of
\begin{align} \label{eq:model_right_envelope}
\text{minimize} \left\{ \renv{\phi}{f}{\lambda}(\y) + g(\y) : \y\in \bR^m \right\},
\end{align}
via alternating minimization of the upper problem.

\subsubsection{Local Convergence with Partial Bregman Proximal Regularization}
In this section we consider a variant of Algorithm~\ref{alg:alternating_minimization_left_proximal}, where we leave out proximal regularization of the $\x$-update, i.e. $\sigma \equiv 0$.
As a short computation reveals, when the gradient formula for the envelope $\lenv{\phi}{f}{\lambda} \circ \nabla \phi^*$ holds (which happens to be true locally whenever $f$ is relatively prox-regular around the limit point and relatively prox-bounded and $\lambda>0$ is sufficiently small) and $A=I$ is chosen to be the identity, we can rewrite the algorithm as the following Bregman proximal gradient update:
$$
\w^{t+1} = \argmin_{\w \in \bR^m} ~g(\w)+ \big\langle \nabla \big(\lenv{\phi}{f}{\lambda} \circ \nabla \phi^* \big)(\w^t), \w - \w^t \big\rangle + \D[\frac{1}{\lambda}\phi^* + \omega](\w, \w^t).
$$
Analogously, alternating minimization of \eqref{eq:model_right} with a proximal regularization of the $y$-update yields the following Bregman proximal gradient update involving the right Bregman envelope (assuming the gradient formula for the right envelope holds):
$$
\y^{t+1} = \argmin_{\y \in \bR^m} ~g(\y)+ \big\langle \nabla \renv{\phi}{f}{\lambda}(\y^t), \y - \y^t \big\rangle + \D[\frac{1}{\lambda}\phi + \omega](\y, \y^t).
$$
This illustrates a close relationship between alternating (Bregman) minimization and the (Bregman) proximal gradient method, which is known from the quadratic case; see e.g. \cite{lewis2007local,Och18,LWC18}. Indeed, in view of Proposition~\ref{prop:lsmad_env}, for $\phi$ super-coercive resp. $\dom \phi = \bR^m$ and $f$ relatively prox-bounded resp. relatively right prox-bounded with thresholds $\lambda_f$, both $(1/\lambda) \phi^* - \lenv{\phi}{f}{\lambda} \circ \nabla \phi^*$ and $(1/\lambda) \phi-\renv{\phi}{f}{\lambda}$ are proper lsc and convex if $\lambda \in \left]0,\lambda_f\right[$ and therefore locally satisfy the one-sided extended descent lemma with modulus $1/\lambda$ when $f$ is relatively prox-regular and $\lambda$ sufficiently small.
Overall, this means that existing convergence results from \cite{BBT2016,BSTV2018} for the Bregman proximal gradient method carry over, at least locally.


\section{Conclusions}
In this paper, we have considered the left and right Bregman proximal mapping of nonconvex functions including indicator functions of nonconvex sets. We define relative prox-regularity, an extension of prox-regurity, which provides us with a sufficient condition for the local single-valuedness of the left Bregman proximal mapping. In this context, we identify relatively amenable functions, i.e. compositions of a convex function and a smooth adaptable mapping as a main source for examples of relatively prox-regular functions.
Since the right Bregman proximal mapping can be related to the left Bregman proximal mapping via a substitution, many results can be transferred to the right Bregman proximal mapping. By way of example, we apply our theory to interpret joint alternating Bregman minimization with additional prox terms, locally, as Bregman proximal gradient.

\subsection*{Acknowledgements:} We would like to thank Tao Wu for fruitful discussions and helpful comments.

\newpage
\appendix
\section*{Appendix: Proof of First Part of Theorem~\ref{thm:convergence}} \label{sec:convergence_proof}
\addtocounter{section}{1}
\begin{lemma} \label{lem:sufficient_descent}
Let the assumptions in Theorem~\ref{thm:convergence} hold.
Then we have, for the iterates produced by Algorithm~\ref{alg:alternating_minimization_left_proximal}, that
\begin{enumerate}
\item[\rm (i)] A monotonic sufficient decrease over the iterates is guaranteed: 
\begin{align} \label{eq:sufficient_descent}
F_\lambda(\w^{t+1}, \x^{t+1}) +  \D[\sigma](\x^{t+1}, \x^t) + \D[\omega](\w^{t+1}, \w^t) \leq F_\lambda(\w^{t}, \x^{t}),
\end{align}
\item[\rm (ii)] $\{\w^t, \x^t\}_{t \in \bN}$ is bounded and $\x^t \in \intr(\dom \phi)$ for all $t$.
\item[\rm (iii)] We have that $-\infty < \beta\leq F_\lambda(\w^t, \x^t)$ is uniformly bounded from below for all $t$ and $\{F_\lambda(\w^{t+1}, \x^{t+1})\}_{t\in \bN}$ converges.
\end{enumerate}
\end{lemma}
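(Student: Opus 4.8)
The plan is to run the classical two-block sufficient-decrease argument for alternating minimization, exploiting that each update is an \emph{exact} minimization, and then to read off (ii) and (iii) from the coercivity of $F_\lambda$ together with the monotonicity coming from (i). The only genuinely Bregman-specific ingredient is the interiority $\x^t \in \intr(\dom\phi)$, which I would settle first (independently of the descent estimate, to avoid circularity) by recognizing the regularized $\x$-update as a single left Bregman prox and applying Lemma~\ref{lem:prox_interior}.

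\textbf{Well-posedness and interiority.} First each argmin is attained: for fixed $\w^t$ the map $\x \mapsto F_\lambda(\w^t, \x) + \D[\sigma](\x, \x^t)$ is proper lsc and coercive in $\x$ (a slice of the coercive $F_\lambda$ plus the nonnegative $\D[\sigma]$), so Weierstrass applies; the $\w$-update is symmetric. To place $\x^{t+1}$ in $\intr(\dom\phi)$, I would merge the two kernels into $\psi := \tfrac{1}{\lambda}\phi + \sigma$. Since $\dom\sigma \supseteq \dom\phi$ we have $\dom\psi = \dom\phi$ and $\intr(\dom\phi) \subseteq \intr(\dom\sigma)$, so $\psi$ is differentiable on $\intr(\dom\phi)$; by the convex sum rule $\partial\psi = \tfrac{1}{\lambda}\partial\phi + \partial\sigma$ is empty on $\bdry\dom\phi$ (since $\partial\phi(\x) = \emptyset$ there, by essential smoothness of $\phi$), whence $\psi$ is essentially smooth, and essential strict convexity is inherited from $\phi$, so $\psi$ is Legendre with $\dom\psi = \dom\phi$. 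Expanding the Bregman terms, the $\x$-update is, up to an additive constant, a left Bregman prox $\lprox{\psi}{\tilde f}{1}$ of a linearly tilted $\tilde f = f - \langle d^t, \cdot\rangle$ at an interior base point. A linear tilt leaves $\partial^\infty f$ unchanged and $N_{\dom\psi} = N_{\dom\phi}$, so the qualification \eqref{eq:qualification_interior} still holds for $\tilde f$ relative to $\psi$; Lemma~\ref{lem:prox_interior} then yields a range inside $\intr(\dom\psi) = \intr(\dom\phi)$. Hence $\x^{t+1} \in \intr(\dom\phi)$, and with $\x^0 \in \intr(\dom\phi)$ from the initialization, induction gives $\x^t \in \intr(\dom\phi)$ for all $t$; this also secures $\x^t \in \intr(\dom\sigma)$ and, since $\dom\omega = \bR^n$, $\w^t \in \intr(\dom\omega)$.

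\textbf{Descent and convergence.} For (i): optimality of $\x^{t+1}$ tested against the feasible point $\x^t$, with $\D[\sigma](\x^t, \x^t) = 0$ (Lemma~\ref{lem:bregman_props}(i), using $\x^t \in \intr(\dom\sigma)$), gives $F_\lambda(\w^t, \x^{t+1}) + \D[\sigma](\x^{t+1}, \x^t) \leq F_\lambda(\w^t, \x^t)$; optimality of $\w^{t+1}$ tested against $\w^t$, with $\D[\omega](\w^t, \w^t) = 0$, gives $F_\lambda(\w^{t+1}, \x^{t+1}) + \D[\omega](\w^{t+1}, \w^t) \leq F_\lambda(\w^t, \x^{t+1})$; chaining these yields exactly \eqref{eq:sufficient_descent}. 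For (iii) and the rest of (ii): since $\D[\sigma], \D[\omega] \geq 0$, \eqref{eq:sufficient_descent} shows $\{F_\lambda(\w^t, \x^t)\}$ is nonincreasing, so all iterates lie in $\{(\w,\x) : F_\lambda(\w,\x) \leq F_\lambda(\w^0, \x^0)\}$, which is bounded by coercivity — that is boundedness in (ii). Coercivity and proper lower semicontinuity of $F_\lambda$ make it bounded below (compact sublevel sets, infimum attained), giving $\beta := \inf F_\lambda > -\infty$; a nonincreasing sequence bounded below converges, completing (iii).

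\textbf{Main obstacle.} I expect the descent and convergence steps to be routine bookkeeping. The delicate point is the interiority $\x^t \in \intr(\dom\phi)$: one must recognize the \emph{regularized} $\x$-update as a bona fide left Bregman prox with respect to the merged kernel $\psi$ and verify that $\psi$ is again Legendre (in particular essentially smooth at the shared boundary $\bdry\dom\phi$), so that Lemma~\ref{lem:prox_interior} — whose qualification survives both the linear tilt and the kernel merger because $N_{\dom\psi} = N_{\dom\phi}$ — is applicable. Establishing this interiority before the descent estimate is also what keeps the argument non-circular, since the vanishing of the diagonal Bregman distances in (i) relies on $\x^t \in \intr(\dom\sigma)$.
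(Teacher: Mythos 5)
Your proposal is correct and follows essentially the same route as the paper: exact-minimization comparison against the previous iterate for the descent estimate, coercivity of $F_\lambda$ for boundedness and the lower bound, and the qualification condition \eqref{eq:qualification_interior} via Lemma~\ref{lem:prox_interior} for the interiority $\x^t \in \intr(\dom\phi)$. The only difference is that you spell out in full the step the paper leaves as ``an argument similar to the one in the proof of Lemma~\ref{lem:prox_interior}'' — namely, verifying that the merged kernel $\tfrac{1}{\lambda}\phi+\sigma$ is again Legendre with the same domain and that the regularized $\x$-update is a left Bregman prox of a tilted $f$ — which is a worthwhile elaboration but not a different proof.
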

\begin{proof}
In view of the coercivity of $F_\lambda$ and since $f,g$ are proper lsc, the iterates are well-defined.

For part (i) note that by the definition of the $\x$-update we have that
$$
F_\lambda(\w^t, \x^{t+1}) + \D[\sigma](\x^{t+1}, \x^t) \leq F_\lambda(\w^t, \x^t)
$$
and by the definition of the $\w$-update
$$
F_\lambda(\w^{t+1}, \x^{t+1}) + \D[\omega](\w^{t+1}, \w^t) \leq F_\lambda(\w^t, \x^{t+1}).
$$
Summing the two yields \eqref{eq:sufficient_descent}.

For part (ii) note that the boundedness of $\{\w^t, \x^t\}_{t \in \bN}$ follows from \eqref{eq:sufficient_descent} and the coercivity of $F_\lambda$. By the qualification condition and an argument similar to the one in the proof of Lemma~\ref{lem:prox_interior} we have that $\x^t \in \intr(\dom \phi)$.

For part (iii) note that $F_\lambda$ is proper and lsc and the iterates are bounded due to part (ii). In view of \cite[Corollary 1.10]{RoWe98}, $F_\lambda$ is bounded from below over the iterates and the conclusion follows.
\qed
\end{proof}

We are now ready to prove the statement from Theorem~\ref{thm:convergence}:
\begin{proof}
We sum the estimate \eqref{eq:sufficient_descent} form $t=0$ to $T$ and obtain, in view of Lemma~\ref{lem:sufficient_descent}(iii), that
\begin{align*}
-\infty < F_\lambda(\w^T, \x^T) - F_\lambda(\w^0, \x^0)&= \sum_{t=0}^T F_\lambda(\w^{t+1}, \x^{t+1}) - F_\lambda(\w^{t}, \x^{t}) \\
&\leq -\sum_{t=0}^T \big(\D[\sigma](\x^{t+1}, \x^t) +\D[\omega](\w^{t+1}, \w^t)\big).
\end{align*} 
We take $T \to \infty$ and deduce that
$$
\D[\sigma](\x^{t+1}, \x^t) +\D[\omega](\w^{t+1}, \w^t) \to 0,
$$
and therefore $\D[\sigma](\x^{t+1}, \x^t) \to 0$ and $\D[\omega](\w^{t+1}, \w^t) \to 0$ and in view of the strict convexity of $\sigma,\omega$ on $\intr(\dom \phi)$, we also have
$\|\x^{t+1}-\x^t\|\to 0$ and $\|\w^{t+1}-\w^t\|\to 0$.
In view of the $\x$- and $\w$-updates and the qualification condition \eqref{eq:qualification_interior}, we obtain that:
$$
0 \in \partial f(\x^{t+1}) + \frac{1}{\lambda} (\nabla \phi(\x^{t+1}) - A(\w^{t+1})) + \nabla \sigma(\x^{t+1}) - \nabla \sigma(\x^t) + \frac{1}{\lambda}(A(\w^{t+1}) - A(\w^t)), 
$$
and
$$
0 \in \partial g(\w^{t+1}) + \frac{1}{\lambda} A^* (\nabla \phi^*(A(\w^{t+1})) - \x^{t+1}) + \nabla \omega(\w^{t+1}) - \nabla \omega(\w^t).
$$
In view of \cite[Exercise 8.8(c)]{RoWe98} and \cite[Proposition 10.5]{RoWe98} and since $\x^{t+1} \in \intr(\dom \phi)$, this means
$$
\begin{pmatrix}
\nabla \sigma(\x^t) - \nabla \sigma(\x^{t+1}) + \frac{1}{\lambda}(A(\w^t) - A(\w^{t+1})) \\
\nabla \omega(\w^t) -\nabla \omega(\w^{t+1})
\end{pmatrix} \in \partial F_\lambda(\w^{t+1}, \x^{t+1}).
$$
In view of Lemma~\ref{lem:sufficient_descent}(ii), the iterates are bounded and we may consider a convergent subsequence $\{\w^{t_j}, \x^{t_j}\}_{j \in \bN} \subset \{\w^t, \x^t\}_{t \in \bN}$. Let $(\w^*, \x^*)$ denote the limit point. In view of the closedness of $\gph\partial F_\lambda$ under the $F_\lambda$-attentive topology, we have for $j \to \infty$, since \mynewline$F_\lambda(\w^{t_j}, \x^{t_j}) \to F_\lambda(\w^*, x^*)$, the continuity of $\nabla \sigma,\nabla \omega,A$ and $\|\x^{t+1}-\x^t\|\to 0$ and \mynewline$\|\w^{t+1}-\w^t\|\to 0$ that:
$$
0\in \partial F_\lambda(\w^*, \x^*).
$$
It remains to argue that also the limit point $\x^* \in \intr(\dom \phi)$ is contained in the interior of $\dom \phi$:
In view of the qualification condition \eqref{eq:qualification_interior} and an argument similar to the one in the proof of Lemma~\ref{lem:prox_interior}, as well as \cite[Proposition 10.5]{RoWe98}, we obtain that $\x^* \in \intr(\dom \phi)$ and conclude that the optimality conditions \eqref{eq:optimality_1} and \eqref{eq:optimality_2} hold.
\qed
\end{proof}

\clearpage
\bibliography{references}
\bibliographystyle{abbrv.bst}

\end{document}